\newcommand{\NN}{{\mathbb{N}}}
\newcommand{\fS}{{\mathfrak{S}}}
\let\la=\lambda
\newcommand{\6}{^}
\newtheorem{thm}{Theorem}[section]
\newtheorem{lem}[thm]{Lemma}
\newtheorem{cor}[thm]{Corollary}
\newtheorem{prop}[thm]{Proposition}
\newtheorem{define}[thm]{Definition}
\newtheorem*{conjA}{Conjecture A}
\newtheorem*{conjB}{Conjecture B}
\theoremstyle{definition}
\newtheorem{nota}[thm]{Notation}
\theoremstyle{remark}
\newtheorem{rem}[thm]{Remark}
\newtheorem{example}[thm]{Example}
\begin{document}

\title{McKay bijections and character degrees}

\author{Eugenio Giannelli}
\address{Dipartimento di Matematica e Informatica Ulisse Dini, Universita degli Studi di Firenze}
\email{eugenio.giannelli@unifi.it}

\keywords{}


\begin{abstract}
We propose a new refinement of the McKay conjecture 
and we prove it for symmetric groups. 
\end{abstract}

\maketitle


\section{Introduction}

Let $G$ be a finite group, let \( p \) be a fixed prime number, and let \( P \) be a Sylow \( p \)-subgroup of \( G \). As usual, we denote by \( \mathrm{Irr}_{p'}(G) \) the set of irreducible characters of \( G \) whose degrees are coprime to \( p \). The McKay conjecture, first stated in \cite{McKay}, asserts that
\[
|\mathrm{Irr}_{p'}(G)| = |\mathrm{Irr}_{p'}(N_G(P))|.
\]
This conjecture has been one of the central problems in the representation theory of finite groups for nearly 60 years. In a groundbreaking article \cite{IMN}, the McKay conjecture was reduced to a collection of deeper statements concerning finite simple groups.
This reduction marked a major breakthrough: the conjecture was subsequently proven for the primes \( 2 \) and \( 3 \) in \cite{MS} and \cite{S23}, respectively, and was ultimately confirmed in full generality in \cite{CS} in 2024.

Several refinements of the McKay conjecture have been proposed over the years, aiming to provide a deeper understanding of the relationship between \( G \) and \( N_G(P) \). Notably, Alperin formulated a stronger version 
\cite{Alperin}, which considers the distribution of irreducible characters across \( p \)-blocks. Isaacs and Navarro proposed a second refinement keeping track of the $\mathrm{mod}$ $p$-congruences of the degrees of corresponding characters \cite{IN}.
Further developments by Navarro introduced additional refinements of both the McKay and the Alperin-McKay conjectures \cite{N04}, which take into account the action of Galois automorphisms on irreducible characters. These newer conjectures not only generalize McKay's original statement but also suggest a rich and intricate interplay between local and global representation-theoretic invariants.

In this article, we propose a further strengthening of the McKay conjecture.

\begin{conjA}
Let $G$ be a finite group, let $p$ be a prime number and let $P$ be a Sylow $p$-subgroup of $G$. Then there exists a bijection $$\varepsilon: \mathrm{Irr}_{p'}(G)\rightarrow \mathrm{Irr}_{p'}(N_G(P)),$$
such that $\varepsilon(\chi)(1)\leq\chi(1)$, for all $\chi\in\ \mathrm{Irr}_{p'}(G).$
\end{conjA}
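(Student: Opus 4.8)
The statement is open in general; I describe a proof for $G=\fS_n$, the symmetric group of degree $n$ (with the prime $p$ and the Sylow subgroup $P=P_n$ arbitrary). Write $n=\sum_{i\ge 0}a_ip^i$ in base $p$, so that $P_n\cong\prod_i P_{p^i}^{\,a_i}$, and since $P_{p^i}^{\,a_i}$ is a Sylow $p$-subgroup of $\fS_{p^i}\wr\fS_{a_i}\le\fS_{a_ip^i}$ we have
\[
N_{\fS_n}(P_n)\;\cong\;\prod_{i\ge 0}\bigl(N_{\fS_{p^i}}(P_{p^i})\wr\fS_{a_i}\bigr).
\]
By Macdonald's theorem, $\Irr_{p'}(\fS_n)$ is in bijection with $\prod_i\mathcal L_i$, where $\mathcal L_i$ is the set of $p^i$-tuples of partitions of total size $a_i$ (the level-$i$ data of the $p$-core tower of $\lambda$). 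The first step is to see that $\Irr_{p'}(N_{\fS_n}(P_n))$ carries the same parametrization: character degrees multiply over the direct factors above; inside a factor $H\wr\fS_{a_i}$ the usual parametrization of characters of wreath products, together with the fact that $\fS_{a_i}$ is a $p'$-group, identifies $\Irr_{p'}(H\wr\fS_{a_i})$ with functions $\Irr_{p'}(H)\to\{\text{partitions}\}$ of total size $a_i$; and $|\Irr_{p'}(N_{\fS_{p^i}}(P_{p^i}))|=p^i$ because these characters lie over the linear characters of $P_{p^i}/[P_{p^i},P_{p^i}]\cong C_p^{\,i}$, on which the quotient $N_{\fS_{p^i}}(P_{p^i})/P_{p^i}\cong C_{p-1}^{\,i}$ acts diagonally.

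Next I would split off the levels. On the normalizer side degrees already factor over $i$ by construction; on the symmetric-group side one shows, by sorting the hook lengths of $\lambda$ according to their $p$-adic valuation, that $\chi^\lambda(1)=\prod_i\delta_i$ with $\delta_i$ depending only on the level-$i$ datum of $\lambda$. Since a product of degree-monotone bijections taken coordinatewise is again degree-monotone, the problem splits into one comparison per level. Moreover the $\fS_{a_i}$-wreath structure on $\Irr_{p'}(N_{\fS_{p^i}}(P_{p^i})\wr\fS_{a_i})$ is mirrored on the global side --- a $p^i$-tuple of partitions of total size $a_i$ being a multiset of boxes among $p^i$ slots, the boxes in a common slot forming a partition --- and wreathing a degree-monotone bijection with a fixed $\fS_a$ is again degree-monotone, since in Macdonald's wreath-degree formula only the factors $\psi(1)^{|f(\psi)|}$ change, and they decrease. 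Hence it suffices to produce, for each $k\ge 1$, a degree-monotone bijection $\Irr_{p'}(\fS_{p^k})\to\Irr_{p'}(N_{\fS_{p^k}}(P_{p^k}))$ that is compatible with wreathing.

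For this building block the normalizer side is completely explicit: from the Clifford-theoretic description above, the degree $(p-1)^j$ occurs in $\Irr_{p'}(N_{\fS_{p^k}}(P_{p^k}))$ with multiplicity $\binom{k}{j}(p-1)^{k-j}$ for $0\le j\le k$; in particular all these degrees are $\le(p-1)^k<p^k$, and exactly $(p-1)^k$ of them equal $1$. I would then argue by induction on $k$. For $k=1$ this is the comparison of the hook-character degrees $\binom{p-1}{0},\dots,\binom{p-1}{p-1}$ of $\fS_p$ with the list consisting of $p-1$ ones and a single $p-1$, which is immediate after sorting. For the inductive step one uses $\fS_{p^k}\supset\fS_{p^{k-1}}\wr\fS_p$, $P_{p^k}=P_{p^{k-1}}\wr C_p$, the known description of $N_{\fS_{p^k}}(P_{p^k})$ as an explicit subgroup of $N_{\fS_{p^{k-1}}}(P_{p^{k-1}})\wr(C_p\rtimes C_{p-1})$, and the inductive bijection for $\fS_{p^{k-1}}$, the remaining freedom at the top level being a "$C_p\rtimes C_{p-1}$-twist'' whose effect on $\chi^\lambda(1)$ is governed by a single hook/multinomial factor.

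The main obstacle is the termwise comparison of the two sorted degree sequences of the building block (and hence, after reassembling, at every level): for every $d$, the number of $p'$-characters of $\fS_{p^k}$ of degree at most $d$ must not exceed the corresponding number for $N_{\fS_{p^k}}(P_{p^k})$. This is delicate in the middle range of degrees, because the global side has only two linear characters and its $p'$-degrees, being complementary products of hook lengths, are not transparently read off the core-tower data, whereas the local degrees are the completely explicit powers $(p-1)^j$ above. I expect the proof to rest on a sufficiently sharp lower bound for $\chi^\lambda(1)$ in terms of its core-tower data --- sharp enough that, under the canonical matching of data, each local degree $(p-1)^j$ is dominated by its global partner --- together with a careful check that, once the building-block bijection is fixed, the $\fS_a$-wreath bookkeeping on the two sides is compatible term by term.
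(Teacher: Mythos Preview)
Your reduction hinges on the claim that $\chi^\lambda(1)=\prod_i\delta_i$ with each $\delta_i$ a function of the level-$i$ datum of the $p$-core tower alone. This factorisation is false. Take $p=2$ and $n=6=2+4$, so $a_1=a_2=1$; using $\beta$-sets of even cardinality throughout, one computes for the eight odd-degree partitions of $6$ their level-$1$ slot (in $\{0,1\}$) and level-$2$ slot (in $\{0,1\}^2$). In particular
\[
\begin{array}{llll}
(5,1):&\text{L1}=0,&\text{L2}=(0,0),&\chi(1)=5;\\
(2,2,1,1):&\text{L1}=1,&\text{L2}=(0,0),&\chi(1)=9;\\
(1^6):&\text{L1}=0,&\text{L2}=(0,1),&\chi(1)=1;\\
(3,3):&\text{L1}=1,&\text{L2}=(0,1),&\chi(1)=5.
\end{array}
\]
Were the factorisation valid, the ratios $\chi^{(5,1)}(1)/\chi^{(2,2,1,1)}(1)$ and $\chi^{(1^6)}(1)/\chi^{(3,3)}(1)$ would both equal $\delta_1(0)/\delta_1(1)$; but $5/9\ne 1/5$. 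Only the $p$-part of $\chi^\lambda(1)$ is governed cleanly by the tower; the hooks of $\lambda$ of length coprime to $p$ do not depend on a single level. With this factorisation gone, your subsequent reduction to a prime-power ``building block'' collapses as well: on the $\fS_n$-side there is no wreath-type degree formula for a building-block bijection to propagate through. Even granting the reduction, the inductive step for $\fS_{p^k}$ is, as you acknowledge, only a sketch; the ``middle range'' comparison you flag is exactly where all the work lies.

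The paper's route is quite different and does not attempt any such factorisation. It inducts on the $p$-adic length, peeling off only the top digit $n=ap^k+r$ and, for each $p^k$-core $\gamma\in\mathcal P_{p'}(r)$, building a bijection between $\mathrm{Irr}(S_n\mid\gamma)$ and $\mathrm{Irr}_{p'}(N_n\mid\varepsilon_r(\chi^\gamma))$ (Proposition~\ref{prop: strategy}). The point is not a termwise comparison but a coarse one: for \emph{most} $\lambda$ with $C_{p^k}(\lambda)=\gamma$ one shows that $\chi^\lambda(1)$ exceeds \emph{every} degree in $\mathrm{Irr}_{p'}(N_n\mid\gamma)$. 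This is achieved by bounding $\lambda_1$ and $\ell(\lambda)$ through the invariant $N_{p^k}(\lambda)$ of the $p^k$-quotient (Proposition~\ref{prop: decisiva}), which forces $\lambda\in\mathcal B_n(T)$ for suitable $T$; then the Sylow-branching result Theorem~\ref{thm: GL3} guarantees $[\,(\chi^\lambda)_{P_n},\mathcal X^\star_{(n)}\,]\ne 0$, and Lemma~\ref{lem: 11} together with Rasala's minimal-degree bounds converts this into a lower bound exceeding the maximal normaliser degree of Corollary~\ref{cor: maxdegN}. The finitely many remaining $\lambda$ (those in $\Delta_a,\Delta_{a-1},\ldots$ of Notation~\ref{not: Deltas}) are injected by hand into explicit small-degree pockets $X(k),Y(k),Z(k)\subseteq\mathrm{Irr}_{p'}(N_{ap^k})$ (Notation~\ref{not: subsets}), with the two genuinely small characters $\chi^{\gamma+(ap^k)}$ and $\chi^{\gamma\cup(1^{ap^k})}$ sent to linear characters. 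The ``sufficiently sharp lower bound'' you anticipate is exactly what Theorem~\ref{thm: GL3} and Lemma~\ref{lem: 11} supply, but it is used to make almost all global degrees dominate all local ones, not to match levels.
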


A first straightforward observation is that Conjecture A holds whenever $G$ admits a selfnormalising Sylow $p$-subgroup. 
Moreover, Conjecture A has been recently reduced to finite simple groups in \cite{HMN} and, in that same article, has been verified for the prime $p=2$. In particular, the authors show that our Conjecture A implies an interesting statement concerning the sum of the squares of the degrees of $p'$-degree characters of a finite group \cite[Conjecture A]{HMN}. 
We also wish to point out that Conjecture A is known to hold for $p$-solvable groups. More precisely the combination of results of Geck \cite{Geck} and Rizo \cite[Theorem A]{Rizo}, extending previous work of Turull on solvable groups \cite[Main Theorem]{Tu}, shows that when $G$ is a $p$-solvable group there exists a bijection $f:\mathrm{Irr}_{p'}(G)\rightarrow\mathrm{Irr}_{p'}(N_G(P))$ such that $f(\chi)(1)$ divides $\chi(1)$, for every $\chi\in\mathrm{Irr}_{p'}(G)$. 
This stronger statement has no chances to hold in general. For instance it fails in the case of the symmetric group $S_7$ and the prime $3$. 

In this article, we bring new evidence in support of Conjecture A by focusing our attention on symmetric groups.

\begin{thm}\label{thm: MAIN}
Conjecture A holds for symmetric groups and any prime number. 
\end{thm}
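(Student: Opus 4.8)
The plan is to reduce Theorem~\ref{thm: MAIN} to the case of a prime power $n=p^k$, and then to treat that case by induction on $k$, exploiting the iterated wreath product structure of Sylow $p$-subgroups of symmetric groups and of their normalisers. Throughout, for two finite multisets $\mathcal{A},\mathcal{B}$ of positive integers of equal size write $\mathcal{A}\preceq\mathcal{B}$ (``$\mathcal{A}$ is dominated by $\mathcal{B}$'') if there is a bijection $\mathcal{A}\to\mathcal{B}$ sending each element to a larger-or-equal one; by an elementary argument (Hall's marriage theorem, or sorting both multisets in decreasing order) this holds if and only if for every $t$ the multiset $\mathcal{A}$ has no more entries exceeding $t$ than $\mathcal{B}$ does, and this condition is exactly what is needed to produce a degree-decreasing bijection between two sets of characters with these degree multisets.

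\emph{Reduction to prime powers.} Write $n=\sum_{i\ge 0}a_i p^i$ in base $p$ and $P=\prod_i P_{p^i}^{a_i}$, where $P_{p^i}$ is the Sylow $p$-subgroup of $S_{p^i}$. Since the orbits of $P$ are $a_i$ blocks of size $p^i$ for each $i$, its normaliser is a genuine direct product $N_{S_n}(P)=\prod_i\bigl(W_i\wr S_{a_i}\bigr)$ with $W_i:=N_{S_{p^i}}(P_{p^i})$, so $\mathrm{Irr}_{p'}(N_{S_n}(P))=\prod_i\mathrm{Irr}_{p'}(W_i\wr S_{a_i})$ with degrees multiplying. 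On the global side I would use the known bijections obtained by restricting $p'$-degree characters: passing first to the Young subgroup $\prod_i S_{a_i p^i}$ and then, inside each factor, to $S_{p^i}\wr S_{a_i}$, a $p'$-degree character is sent each time to its unique $p'$-degree constituent, yielding a bijection $\mathrm{Irr}_{p'}(S_n)\to\prod_i\mathrm{Irr}_{p'}(S_{p^i}\wr S_{a_i})$ that does not increase degrees (the degree of a character is at least that of any constituent of a restriction). Granting, for each $i$, a degree-decreasing bijection $\mathrm{Irr}_{p'}(S_{p^i})\to\mathrm{Irr}_{p'}(W_i)$, I would transport it through the wreath products: because $a_i<p$, a $p'$-degree irreducible character of $H\wr S_{a_i}$ (for $H=S_{p^i}$ or $H=W_i$) is parametrised by an assignment $\phi\mapsto\lambda_\phi$ of a partition to each $\phi\in\mathrm{Irr}_{p'}(H)$ with $\sum_\phi|\lambda_\phi|=a_i$, and has degree
\[
\frac{a_i!}{\prod_\phi|\lambda_\phi|!}\,\prod_\phi\phi(1)^{|\lambda_\phi|}\,\chi^{\lambda_\phi}(1);
\]
replacing each $\phi$ by its image only decreases the factors $\phi(1)^{|\lambda_\phi|}$, so one gets degree-decreasing bijections $\mathrm{Irr}_{p'}(S_{p^i}\wr S_{a_i})\to\mathrm{Irr}_{p'}(W_i\wr S_{a_i})$. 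Composing these and taking the product over $i$ deduces Conjecture~A for $S_n$ from the prime power case.

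\emph{The prime power case.} It suffices to show, by induction on $k$, that the multiset $\mathcal{D}_k$ of degrees of $\mathrm{Irr}_{p'}(W_k)$ satisfies $\mathcal{D}_k\preceq\mathcal{E}_k$, where $\mathcal{E}_k$ is the multiset of degrees of $\mathrm{Irr}_{p'}(S_{p^k})$. Using the classical identification $W_k\cong W_{k-1}\wr\mathrm{AGL}(1,p)$, where $\mathrm{AGL}(1,p)=N_{S_p}(C_p)$ acts on $p$ points, Clifford theory over the base group $W_{k-1}^p$ gives the following: a character lying over an $\mathrm{AGL}(1,p)$-orbit of $(\phi_1,\dots,\phi_p)\in\mathrm{Irr}(W_{k-1})^p$ with the $\phi_j$ not all equal has degree divisible by $p$, because $\mathrm{AGL}(1,p)$ has a normal Sylow $p$-subgroup acting as a single $p$-cycle, so the stabiliser of a non-constant colouring is a $p'$-group and hence has index divisible by $p$; over a constant orbit $\phi^{\otimes p}$ one obtains, by Gallagher's theorem, the characters $\widetilde{\phi}\otimes\theta$ of degree $\phi(1)^p\theta(1)$ for $\theta\in\mathrm{Irr}(\mathrm{AGL}(1,p))$. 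Since every irreducible character of $\mathrm{AGL}(1,p)$ has $p'$-degree, this gives
\[
\mathcal{D}_k=\bigl\{\,d^p e\ :\ d\in\mathcal{D}_{k-1},\ e\in\mathcal{D}_1\,\bigr\}
\]
as a multiset, where $\mathcal{D}_1$ consists of $p-1$ copies of $1$ and one copy of $p-1$ (the degrees of $\mathrm{Irr}(\mathrm{AGL}(1,p))$). Running the same Clifford analysis for $S_{p^{k-1}}\wr S_p\le S_{p^k}$, with $S_p$ in place of $\mathrm{AGL}(1,p)$ and hook characters of $S_p$ in place of all of $\mathrm{Irr}(\mathrm{AGL}(1,p))$, shows that $\mathrm{Irr}_{p'}(S_{p^{k-1}}\wr S_p)$ has degree multiset $\mathcal{F}_k=\{\,f^p g:\ f\in\mathcal{E}_{k-1},\ g\in\mathcal{E}_1\,\}$, where $\mathcal{E}_1=\{\binom{p-1}{j}:0\le j\le p-1\}$ is the set of hook degrees of $S_p$; and restriction of $p'$-degree characters gives a degree-non-increasing bijection $\mathrm{Irr}_{p'}(S_{p^k})\to\mathrm{Irr}_{p'}(S_{p^{k-1}}\wr S_p)$, so $\mathcal{F}_k\preceq\mathcal{E}_k$. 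The base case $k=1$ is the domination $\mathcal{D}_1\preceq\mathcal{E}_1$, which holds because $\mathcal{E}_1$ contains two $1$'s and $p-2$ entries that are $\ge p-1$; and for $k\ge 2$, combining the inductive hypothesis $\mathcal{D}_{k-1}\preceq\mathcal{E}_{k-1}$ with $\mathcal{D}_1\preceq\mathcal{E}_1$ and multiplying componentwise gives $\mathcal{D}_k=\{d^pe\}\preceq\{f^pg\}=\mathcal{F}_k\preceq\mathcal{E}_k$. This closes the induction and, with the reduction, proves the theorem.

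\emph{Main obstacle.} The delicate part should be the family of ``unique $p'$-constituent of a restriction'' bijections invoked above --- both the reduction bijections $\mathrm{Irr}_{p'}(S_n)\to\prod_i\mathrm{Irr}_{p'}(S_{p^i}\wr S_{a_i})$ and the bijection $\mathrm{Irr}_{p'}(S_{p^k})\to\mathrm{Irr}_{p'}(S_{p^{k-1}}\wr S_p)$: one must prove that restricting a $p'$-degree character of a symmetric group to the relevant Young subgroup, or to a wreath-product subgroup stabilising a block system, produces exactly one constituent of $p'$-degree, and with multiplicity one. This rests on a careful analysis of the Littlewood--Richardson and branching rules together with Macdonald's criterion for $p'$-degree characters of symmetric groups. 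A second, more computational point is the Clifford-theoretic description of $\mathrm{Irr}_{p'}(W_{k-1}\wr\mathrm{AGL}(1,p))$ and of $\mathrm{Irr}_{p'}(S_{p^{k-1}}\wr S_p)$, together with the identification $N_{S_{p^k}}(P_{p^k})\cong W_{k-1}\wr\mathrm{AGL}(1,p)$: these are standard but must be carried out with care in order to pin down both which characters have $p'$-degree and what their precise degrees are.
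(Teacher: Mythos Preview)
Your prime-power step rests on the assertion that $W_k:=N_{S_{p^k}}(P_{p^k})$ is isomorphic to $W_{k-1}\wr\mathrm{AGL}(1,p)$, and this is false for every $p\ge 3$ and $k\ge 2$. A count of orders already shows the discrepancy: it is classical (and recorded in Section~\ref{subsec: normalisers}) that $W_k\cong P_{p^k}\rtimes(C_{p-1})^{\times k}$, so $|W_k|=|P_{p^k}|\,(p-1)^k$, whereas $|W_{k-1}\wr\mathrm{AGL}(1,p)|=|W_{k-1}|^{\,p}\cdot p(p-1)=|P_{p^k}|\,(p-1)^{p(k-1)+1}$. More concretely, the base group $W_{k-1}^{\,p}$ does \emph{not} normalise $P_{p^k}=P_{p^{k-1}}\wr C_p$: conjugating the top cycle by $(w_1,\dots,w_p)\in W_{k-1}^{\,p}$ lands back in $P_{p^k}$ only when all $w_i$ lie in the same coset of $P_{p^{k-1}}$. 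As a consequence your recursion $\mathcal{D}_k=\{d^{\,p}e:d\in\mathcal{D}_{k-1},\ e\in\mathcal{D}_1\}$ is incorrect; by Proposition~\ref{prop: degreesNormPpower} the actual degree multiset of $\mathrm{Irr}_{p'}(W_k)$ has $\binom{k}{m}(p-1)^{k-m}$ entries equal to $(p-1)^m$ for each $m\in[0,k]$, which satisfies instead $\mathcal{D}_k=\{d\cdot e:d\in\mathcal{D}_{k-1},\ e\in\mathcal{D}_1\}$. Already for $k=2$ your formula would place a character of degree $(p-1)^{p+1}$ in $\mathrm{Irr}_{p'}(W_2)$, while the true maximum is $(p-1)^2$.

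The obstacle you do flag --- the ``unique $p'$-constituent under restriction'' bijections, both for $S_n\to\prod_i S_{p^i}\wr S_{a_i}$ and for $S_{p^k}\to S_{p^{k-1}}\wr S_p$ --- is also a genuine gap: such restriction-compatible bijections are available for $p\in\{2,3\}$, but for $p\ge 5$ they are not in the literature in the form you need, and establishing them would be a substantial project in its own right. This is exactly why the paper abandons the restriction-based strategy for $p\ge 5$ and argues by a completely different route: it inducts on the $p$-adic length of $n$, stratifies $\mathrm{Irr}(S_n\mid\gamma)$ for each $p^k$-core $\gamma$ according to the invariant $N_{p^k}(\lambda)$ of Definition~\ref{def: nlambda}, shows via Theorem~\ref{thm: GL3} and Rasala's minimal-degree bounds that characters in all but the top few strata already dominate every $p'$-character of $N_n$, and then matches the remaining strata against explicitly constructed families of small-degree characters of $N_n$ by a counting argument.
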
 



As we observed above, Theorem \ref{thm: MAIN} is known to hold for $p=2$. 
For this reason, in this paper, we focus our attention on odd primes.

To prove Theorem  \ref{thm: MAIN} in this setting, we employ a combination of techniques. First, we extend and refine the description of the irreducible characters of the normaliser \( N_{S_n}(P) \), initially developed in \cite{GANT} (see also \cite{Tendi}). A second key ingredient, which we wish to emphasize, is the recent work appeared in \cite{GL2, GL3}, which provides a detailed description of the irreducible constituents arising from the restriction of irreducible characters of symmetric groups to Sylow \( p \)-subgroups. These new developments are combined with classical results on minimal degrees of irreducible characters of symmetric groups, established by Rasala in \cite{Rasala}, as well as Olsson's theory of \( p \)-core towers, as presented in \cite{OlssonBook}.

\smallskip

%
%
%

We conclude by noting that the results of this paper can be readily adapted to show the existence of a bijection between the height zero irreducible characters of a $p$-block of maximal defect of $S_n$ and those of its Brauer correspondent. Extending this result to arbitrary $p$-blocks of symmetric groups lies beyond the scope of this article, as it will require more sophisticated algebraic and combinatorial techniques, and will be the subject of future investigation.  
Nevertheless, we believe the following may hold.

\noindent (For a $p$-block $C$ of a finite group $H$, we let $\mathrm{Irr}_{0}(C)$ denote the set of height zero irreducible characters of $H$ lying in $C$.)

\begin{conjB}
Let $G$ be a finite group, let $p$ be a prime number, let $B$ be a $p$-block of $G$, and let $b$ be its Brauer correspondent.  
Then there exists a bijection  
$$\varepsilon: \mathrm{Irr}_{0}(B) \rightarrow \mathrm{Irr}_{0}(b),$$  
such that $\varepsilon(\chi)(1) \leq \chi(1)$ for all $\chi \in  \mathrm{Irr}_{0}(B)$.
\end{conjB}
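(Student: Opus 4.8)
We conclude by outlining a strategy one might pursue to establish Conjecture~B for $G=S_n$, building on Theorem~\ref{thm: MAIN}. (For general finite groups one would presumably first need a block-wise ``inductive'' condition and a reduction to quasi-simple groups, in the spirit of \cite{HMN}; we do not address that here.) \emph{Reduction to a single block.} By the Nakayama conjecture a $p$-block $B$ of $S_n$ is labelled by a pair $(\kappa,w)$ with $\kappa$ a $p$-core and $w\ge 0$ its weight, $n=|\kappa|+pw$; its defect group $D$ is a Sylow $p$-subgroup of the Young subgroup $S_{pw}\le S_{|\kappa|}\times S_{pw}\le S_n$, so $N_{S_n}(D)=S_{|\kappa|}\times N_{S_{pw}}(D)$. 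Brauer correspondence then identifies $b$ with $B_0\boxtimes b'$, where $B_0$ is the defect-zero block of $S_{|\kappa|}$ containing $\chi^\kappa$ and $b'$ is a full-defect block of $N_{S_{pw}}(D)$ (essentially independent of $\kappa$; see \cite{OlssonBook}). Since $\chi^\kappa$ is of defect zero, one checks that $\chi^\kappa\times\psi$ has height zero in $b$ if and only if $\psi$ has height zero in $b'$, so that Conjecture~B for $B$ amounts to producing a bijection $\varepsilon_B\colon\mathrm{Irr}_0(B)\to\mathrm{Irr}_0(b')$ with
\[
\chi^\kappa(1)\,\varepsilon_B(\chi)(1)\ \le\ \chi(1)\qquad\text{for all }\chi\in\mathrm{Irr}_0(B).
\]

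\emph{Making the degrees explicit.} Writing $[m]_{p'}$ for the $p'$-part of an integer $m$ and $\mathrm{hook}(\mu)$ for the product of the hook lengths of a partition $\mu$, and using that $\lambda$ has exactly $w$ hook lengths divisible by $p$, whose $p$-parts removed reconstitute the hook lengths of the $p$-quotient $(\lambda^{(0)},\dots,\lambda^{(p-1)})$ (so that their product equals $p^{w}\prod_i\mathrm{hook}(\lambda^{(i)})$), one obtains for every $\chi^\lambda\in\mathrm{Irr}_0(B)$ the identity
\[
\frac{\chi^\lambda(1)}{\chi^\kappa(1)}\;=\;\binom{n}{|\kappa|}\,\frac{[(pw)!]_{p'}\;[\mathrm{hook}(\kappa)]_{p'}}{[\mathrm{hook}(\lambda)]_{p'}\;\prod_{i=0}^{p-1}[\mathrm{hook}(\lambda^{(i)})]_{p'}},
\]
where one uses that, by Olsson's $p$-core-tower criterion, membership of $\chi^\lambda$ in $\mathrm{Irr}_0(B)$ is equivalent to $\nu_p\bigl(\prod_i\mathrm{hook}(\lambda^{(i)})\bigr)$ attaining its minimal value $\nu_p(w!)$. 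On the local side one uses the description of $\mathrm{Irr}\bigl(N_{S_{pw}}(D)\bigr)$ from \cite{GANT} (applied with $n$ replaced by $pw$), refined as in the proof of Theorem~\ref{thm: MAIN}, together with the decomposition $\mathrm{Irr}_{p'}\bigl(N_{S_{pw}}(D)\bigr)=\bigsqcup_{b'}\mathrm{Irr}_0(b')$ forced by $D\trianglelefteq N_{S_{pw}}(D)$; this reduces the required count $|\mathrm{Irr}_0(B)|=|\mathrm{Irr}_0(b')|$ to the Alperin--McKay equality for $S_n$, which is known.

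\emph{Building the bijection, and the main obstacle.} I would construct $\varepsilon_B$ by running the inductive argument of Theorem~\ref{thm: MAIN} ``one $p$-core at a time'', feeding in the restriction-to-Sylow results of \cite{GL2, GL3} and the minimal-degree estimates of \cite{Rasala}, now applied inside the set of partitions of $n$ with fixed $p$-core $\kappa$, and checking that the matching so produced never lowers the $p$-core (hence respects blocks). The step I expect to be the main obstacle is the passage from $p$-cores of size $<p$ -- equivalently, from blocks of maximal defect, where $S_{|\kappa|}$ is a $p'$-group and the argument of Theorem~\ref{thm: MAIN} transfers almost verbatim (this is precisely the case the present paper can already handle) -- to an arbitrary $p$-core $\kappa$. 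Once $|\kappa|\ge p$ the factor $\chi^\kappa(1)$ on the local side is a genuine character degree, and for $\lambda$ ranging over a block of large weight $w$ and comparatively small core $\kappa$ the degrees $\chi^\lambda(1)$ are strongly compressed relative to $\chi^\kappa(1)$; Rasala's \emph{global} minimal-degree bounds then become too coarse, and one seems to need a fixed-core, weight-$w$ analogue of them, together with a precise understanding of which height-zero $p$-quotients $(\lambda^{(i)})$ force the ratio above to be near-minimal. Handling this uniformly in $w$, rather than only in the range $w<p$ where $D$ is elementary abelian and every character of $B$ has height zero, is where the real work lies, and is the reason the general case is deferred.
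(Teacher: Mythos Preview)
The paper does not prove Conjecture~B; it is stated as an open conjecture, with the only accompanying remarks being that (i) the methods of the paper can be adapted to handle $p$-blocks of \emph{maximal defect} of $S_n$, (ii) the extension to arbitrary $p$-blocks of $S_n$ ``will require more sophisticated algebraic and combinatorial techniques'' and is deferred to future work, and (iii) the $p$-solvable case is covered by \cite{MR}. There is therefore no proof in the paper to compare your proposal against.

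Your write-up is, appropriately, not a proof either but a strategy sketch, and on that level it is well aligned with the paper's own assessment. Your reduction via the Nakayama parametrisation $B\leftrightarrow(\kappa,w)$, the identification $N_{S_n}(D)=S_{|\kappa|}\times N_{S_{pw}}(D)$, and the observation that for $|\kappa|<p$ (maximal defect) the argument of Theorem~\ref{thm: MAIN} transfers essentially verbatim, are exactly what the paper alludes to when it says the maximal-defect case ``can be readily adapted'' from its results. Your diagnosis of the main obstacle --- that for $|\kappa|\ge p$ the factor $\chi^\kappa(1)$ is a genuine degree and Rasala-type global bounds become too coarse, so one needs fixed-core analogues --- is a reasonable articulation of why the paper calls the general block case out of scope. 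Just be aware that the displayed degree identity you wrote down is heuristic as stated and would need to be made precise (and that your sketch addresses only $G=S_n$, whereas Conjecture~B is posed for arbitrary finite $G$); but as a roadmap it matches, and in places sharpens, what the paper itself says.
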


Encouraging evidence in support of Conjecture B is presented in \cite{MR}, where the above statement is proved for $p$-solvable groups. Moreover, Conjecture B has been recently verified in \cite{Lin} for blocks with cyclic or Klein four defect group.

\subsection*{Acknowledgments}
The author's research is funded by: the European Union Next Generation EU, M4C1, CUP B53D23009410006, PRIN 2022 - 2022PSTWLB Group Theory and Applications; and the INDAM-GNSAGA Project CUP E53C24001950001. 

 We wish to thank S. Law, M. Martinez, G. Navarro, F. Pediconi, C. Vallejo and M. Wildon for several useful conversations, and G. Malle for comments and corrections on a previous version of this paper. 
 We are also very grateful to the referee for several comments and suggestions that improved the readability of the article.

\section{Background and Notation}\label{sec: 2}
In this article, for any $x,y\in\mathbb{N}$ we use the symbol $[x,y]$ to denote the subset $\{x,x+1,\ldots, y\}$ of $\mathbb{N}$. We regard $[x,y]$ as the empty set whenever $x>y$.

We start by briefly recalling some standard facts on the character theory of symmetric groups. We refer the reader to the usual references \cite{JK} and \cite{OlssonBook} for complete details. 
We let $\mathcal{P}(n)$ denote the set of partitions of $n$.
The irreducible characters of the symmetric group $S_n$ are naturally labelled by partitions of $n$. For a partition $\lambda=(\lambda_1,\ldots, \lambda_s)\in\mathcal{P}(n)$, we set $|\lambda|=n$, we denote by $\lambda'$ its conjugate partition and by $\chi^\lambda$ the irreducible character of $S_n$ corresponding to $\lambda$. We set $\ell(\lambda)=(\lambda')_1$ (i.e. $\ell(\lambda)$ equals the number of parts of $\lambda$). 
As a convention, we regard $\lambda_s=0$ for every $s>\ell(\lambda)$.
Given $n,r\in\mathbb{N}$ and 
partitions $\gamma\in\mathcal{P}(r)$ and $\lambda\in\mathcal{P}(n)$ we write $\gamma\subseteq\lambda$ if $\gamma_\ell\leq\lambda_\ell$ for every $\ell\in\mathbb{N}$ (equivalently, $\gamma\subseteq\lambda$ if the Young diagram of $\gamma$ is contained in that of $\lambda$). We write $\lambda+\gamma$ to denote the partition $\mu\in\mathcal{P}(n+r)$ such that $\mu_\ell=\lambda_\ell+\gamma_\ell$, for every $\ell\in\mathbb{N}$. Finally we use the symbol $\lambda\cup\gamma$ to denote the partition of $n+r$ obtained by reordering the parts of the composition $(\lambda_1,\ldots,\lambda_{\ell(\lambda)}, \gamma_1,\ldots, \gamma_{\ell(\gamma)}).$
Moreover, for any $t\in [1,n]$ we denote by $\mathcal{B}_n(t)$ the subset of $\mathcal{P}(n)$ defined by
$$\mathcal{B}_n(t)=\{\lambda\in\mathcal{P}(n)\ |\ \lambda_1, \ell(\lambda)\leq t\}.$$
In other words, $\mathcal{B}_n(t)$ consists of those partitions of $n$ whose Young diagram fits into a $(t\times t)$-square grid. 
For a subset $A\subseteq\mathcal{P}(n)$ we let $A^{\circ}=\{\lambda, \lambda'\ |\ \lambda\in A\}$. Notice that $\mathcal{B}_n(t)^{\circ}=\mathcal{B}_n(t)$ for any $n,t\in\mathbb{N}$.

We recall that the Young diagram $Y(\lambda)$ is defined as follows: 
$$Y(\lambda)=\{(i,j)\in\NN\times\NN\mid i\in [1,\ell(\lambda)],\ j\in [1,\la_i]\}.$$

\medskip

\noindent Every element $(r,c)\in Y(\lambda)$ is called a \textit{node} of $Y(\lambda)$. We denote by $h_{r,c}(\lambda)$ the \textit{hook} associated to the node $(r,c)$, and we let $|h_{r,c}(\lambda)|$ be the size (or length) of the hook $h_{r,c}(\lambda)$. 
We recall that $h_{r,c}(\lambda)$ is the subset of $Y(\lambda)$ defined by
$$h_{r,c}(\la)
  =\{(r,y)\in Y(\lambda)\mid y\geq c\}\cup\{(x,c)\in Y(\lambda)\mid x\geq r\}.$$
  
Given $(i,j)\in Y(\lambda)$, we say that $h_{i,j}(\lambda)$ is an $e$-hook of $\lambda$ if its size $|h_{i,j}(\lambda)|$ is divisible by a given integer $e$. We let $\mathcal{H}^{e}(\lambda)$ be the set consisting of all the $e$-hooks of $\lambda$. 
A partition $\gamma$ is called an $e$-core if $\mathcal{H}^{e}(\lambda)=\emptyset$. The $e$-core of $\lambda$ is the $e$-core partition obtained from $\lambda$ by subsequently removing $e$-hooks. This is denoted by $C_e(\lambda)$.
The removal of a hook from the Young diagram of a partition corresponds to the removal of its rim-hook. 
 Finally, we let $Q_e(\lambda)=(\lambda^{(0)},\lambda^{(1)},\ldots,\lambda^{(e-1)})$ be the $e$-quotient of $\lambda$. The \textit{size} of the $e$-quotient is denoted by $|Q_e(\lambda)|$ and it is defined as the integer $|\lambda^{(0)}|+|\lambda^{(1)}|+\cdots +|\lambda^{(e-1)}|$.
 We refer the reader to \cite[Chapter I]{OlssonBook} for precise definitions and an extensive discussion on these combinatorial objects. We just recall that $|\mathcal{H}^e(\lambda)|=|Q_e(\lambda)|$ and that $|\lambda|=|C_e(\lambda)|+e|Q_e(\lambda)|$.

 We say that a partition $\lambda\in\mathcal{P}(n)$ is a \textit{hook partition} if $\lambda=(n-x,1^x)$ for some $x\in [0,n-1]$, and we denote by $\mathcal{H}(n)$ the subset of $\mathcal{P}(n)$ consisting of all hook partitions of $n$. 
 
Let $n_1,n_2,\ldots, n_t\in\mathbb{N}$ be such that $n_1+n_2+\cdots+n_t=n$, let $Y=S_{n_1}\times S_{n_2}\times\cdots\times S_{n_t}$ be the corresponding Young subgroup of $S_n$, and let $\lambda\in\mathcal{P}(n)$ and $\mu_i\in\mathcal{P}(n_i)$ for every $i\in [1,t]$. Then we let $$\mathrm{LR}(\lambda;\mu_1,\mu_2,\ldots, \mu_t),$$
be the multiplicity of $\chi^{\mu_1}\times\chi^{\mu_2}\times\cdots\times\chi^{\mu_t}\in\mathrm{Irr}(Y)$ as an irreducible constituent of $(\chi^\lambda)_Y$. These integers are known as Littlewood-Richardson coefficients.
We refer the reader to \cite[Chapter 17]{James} for a complete discussion about the Littlewood-Richardson rule. In the following lemma we just recall a few well-known facts that we will use later in this article. 

\begin{lem}\label{lem: LR}
Let $r,x\in\mathbb{N}$ be such that $r<x$ and let $n=x+r$. Let $\lambda\in\mathcal{P}(n)$ and let $\gamma\in\mathcal{P}(r)$. 
The following hold. 
\begin{itemize}
\item[(a)] There exists $\mu\in\mathcal{P}(x)$ such that $\mathrm{LR}(\lambda;\mu, \gamma)\neq 0$ if and only if $\gamma\subseteq\lambda$.
\item[(b)] Suppose that $x\geq 3$. Then there exists $\mu\in\mathcal{P}(x)\smallsetminus\{(x)\}^{\circ}$ such that $\mathrm{LR}(\lambda;\mu, \gamma)\neq 0$ if and only if $\gamma\subseteq\lambda$ and $\lambda\notin\{\gamma+(x), \gamma\cup(1^{x})\}$.
\end{itemize}
\end{lem}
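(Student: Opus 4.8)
The plan is to deduce both statements from the Littlewood–Richardson rule, using the interpretation of $\mathrm{LR}(\lambda;\mu,\gamma)$ as the number of Littlewood–Richardson skew tableaux of shape $\lambda/\mu$ and content $\gamma$ — equivalently, as the multiplicity of $\chi^\lambda$ in $(\chi^\mu\times\chi^\gamma)^{S_n}$, computed by adding the Young diagram of $\gamma$ to that of $\mu$ according to the LR rule. Part (a) is the classical and symmetric statement: if some $\mu\in\mathcal{P}(x)$ has $\mathrm{LR}(\lambda;\mu,\gamma)\neq 0$, then since every LR filling of $\lambda/\mu$ by content $\gamma$ requires $\mu\subseteq\lambda$, and moreover a tableau of content $\gamma$ can only fit into $\lambda/\mu$ if $\gamma\subseteq\lambda$ (the first column of the filling is strictly increasing with entries $1,2,\dots$, forcing at least $\ell(\gamma)$ rows, and a row analysis forces $\gamma_1\le\lambda_1$, etc.); conversely, if $\gamma\subseteq\lambda$ then taking $\mu=\lambda\ominus\gamma$ obtained by deleting a copy of $Y(\gamma)$ from the top-left of $Y(\lambda)$ does not in general give a partition, so instead I would argue via the combinatorial fact that for $\gamma\subseteq\lambda$ one can always strip $\gamma$ from $\lambda$ as a sequence of horizontal strips (or invoke that $c^\lambda_{\mu\gamma}\ge c^\lambda_{\lambda/\gamma}$-type positivity), yielding at least one $\mu\in\mathcal{P}(x)$ with $\mathrm{LR}(\lambda;\mu,\gamma)\ge 1$. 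Cleanest is: $\gamma\subseteq\lambda \iff \chi^\gamma$ appears in $(\chi^\lambda)_{S_r\times S_{x}}$ restricted and projected appropriately, i.e. $\iff$ there is $\mu$ with $\mathrm{LR}(\lambda;\mu,\gamma)\ne0$, which is exactly the standard branching/LR statement, so I would simply cite \cite[Chapter 17]{James}.

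For part (b), I would start from part (a): the condition $\gamma\subseteq\lambda$ is necessary, so assume it. The extra constraint is that we must find such a $\mu$ avoiding the two one-row/one-column shapes $(x)$ and $(1^x)$. The set of $\mu\in\mathcal{P}(x)$ with $\mathrm{LR}(\lambda;\mu,\gamma)\ne 0$ is closed under conjugation (since $\mathrm{LR}(\lambda';\mu',\gamma')=\mathrm{LR}(\lambda;\mu,\gamma)$), and it is nonempty by (a); so the only way every such $\mu$ lies in $\{(x),(1^x)\}$ is that this set equals $\{(x)\}$, $\{(1^x)\}$, or $\{(x),(1^x)\}$. I would show each of these forces $\lambda$ into the exceptional list. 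The heart is the case $\mu=(x)$: by the Pieri rule, $\mathrm{LR}(\lambda;(x),\gamma)\ne0$ means $\lambda/\gamma$ is a horizontal strip of size $x$, and $\mathrm{LR}(\lambda;(x),\gamma)=1$ in that case. Dually $\mu=(1^x)$ gives $\lambda/\gamma$ a vertical strip. I then need: if $x\ge 3$ and $\gamma\subseteq\lambda$ with $|\lambda/\gamma|=x$ but $\lambda\notin\{\gamma+(x),\gamma\cup(1^x)\}$, then some non-row, non-column $\mu$ also works. The key obstacle — the one step needing real care — is this existence statement: given $\lambda/\gamma$ of size $x\ge3$ that is neither a single horizontal strip appended in the first available positions to form $\gamma+(x)$ nor the analogous vertical strip, produce $\mu\in\mathcal{P}(x)\setminus\{(x),(1^x)\}$ together with an LR filling of $\lambda/\mu$ of content $\gamma$.

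To handle that obstacle I would argue constructively. Since $\gamma\subseteq\lambda$ and $x\ge 3$, consider building $\lambda$ from $\gamma$ by adding $x$ nodes; the failure of $\lambda=\gamma+(x)$ means the skew shape $\lambda/\gamma$ is not a horizontal strip or is a horizontal strip not occupying row-by-row the "greedy" positions, but in all cases one can add the $x$ boxes in at least two rows or in a bent configuration. Concretely: if $\lambda/\gamma$ occupies at least two distinct rows and at least two distinct columns, then I can find a $2\times2$-ish sub-pattern and peel off an LR tableau of a shape $\mu$ with $2\le\mu_1\le x-1$; if $\lambda/\gamma$ lies in a single row (so $\lambda=\gamma+(0,\dots,0,x,0,\dots)$ but with the $x$ boxes not all in the top row of the strip — impossible, a horizontal strip in one row is in one row, so $\lambda=\gamma+(x)$ after all, handled), or in a single column (dually), we're in an exceptional case. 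The remaining possibility is that $\lambda/\gamma$ is a genuine horizontal strip spread over $\ge 2$ rows but $\lambda\ne\gamma+(x)$: here I use that $x\ge 3$ to split the strip and realize it as $\mathrm{LR}(\lambda;(x-1,1),\gamma')$ or similar with an honest LR filling, choosing $\mu=(x-1,1)$ (valid since $x\ge3$ gives $\mu\ne(x),(1^x)$). The symmetric (conjugate) argument covers vertical strips. I would present this as a short case analysis on the shape of $\lambda/\gamma$, remarking that $x\ge3$ is exactly what guarantees $(x-1,1)$ and $(2,1^{x-2})$ are distinct from both $(x)$ and $(1^x)$, and that the small cases $x\le2$ genuinely fail (e.g. $x=2$, $\lambda=\gamma+(1,1)$).
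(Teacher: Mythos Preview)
Your approach to (a) matches the paper's: both simply invoke the Littlewood--Richardson rule.

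For (b) the paper cites \cite[Lemma 4.4]{BK} directly, whereas you attempt a self-contained combinatorial argument. Your overall strategy---case analysis on whether $\lambda/\gamma$ is a horizontal or vertical strip, then constructing an explicit $\mu$---is sound and can be completed, but the sketch has a genuine gap. In the step where you claim that $\lambda/\gamma$ lying in a single row forces $\lambda=\gamma+(x)$, your reasoning (``a horizontal strip in one row is in one row, so $\lambda=\gamma+(x)$ after all'') is circular: a priori that single row could be row $i\geq 2$, giving $\lambda\neq\gamma+(x)$. What actually forces $i=1$ is the hypothesis $r<x$, which you never invoke: if $\lambda/\gamma$ sits entirely in row $i\geq 2$ then $\gamma_i+x=\lambda_i\leq\lambda_{i-1}=\gamma_{i-1}\leq r<x$, a contradiction. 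Without $r<x$ the statement is genuinely false; for instance with $\gamma=(4,1)$, $\lambda=(4,4)$, $x=3$ one has $\gamma\subseteq\lambda$ and $\lambda\notin\{\gamma+(3),\gamma\cup(1^3)\}$, yet the only $\mu$ with $\mathrm{LR}(\lambda;\mu,\gamma)\neq 0$ is $\mu=(3)$. The dual remark applies to your single-column case.

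Your constructive step (``find a $2\times 2$-ish sub-pattern'') is also too vague as written. A clean way to finish your line of argument: if $\lambda/\gamma$ is neither a horizontal nor a vertical strip, then by Pieri neither $(x)$ nor $(1^x)$ can occur, so any $\mu$ supplied by part (a) already lies outside $\{(x)\}^{\circ}$. If $\lambda/\gamma$ is a horizontal strip occupying at least two rows, place a $2$ in the rightmost box of its lowest occupied row and $1$'s elsewhere; this is an LR tableau of content $(x-1,1)$, and $x\geq 3$ guarantees $(x-1,1)\notin\{(x),(1^x)\}$. The hypothesis $r<x$ then handles the remaining single-row case as above, and conjugation gives the vertical-strip case.
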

\begin{proof}
Statement $(a)$ is an immediate consequence of the Littlewood-Richardson rule. Similarly statement $(b)$ easily follows from \cite[Lemma 4.4]{BK}. 
\end{proof}
 
For a prime number $p$, we will denote by $\mathcal{P}_{p'}(n)$ the subset of $\mathcal{P}(n)$ consisting of partitions $\lambda$ such that $\chi^\lambda\in\mathrm{Irr}_{p'}(S_n)$. 
The following proposition is a direct consequence of \cite[Proposition 6.4]{OlssonBook} and it describes the elements of $\mathcal{P}_{p'}(n)$.

\begin{prop}\label{prop: pdivisibility}
 Let $n$ and $k$ be positive integers such that $n=ap\6k+r$, for some $r<p^k$ and some $a\in [1,p-1]$.
 Let $\lambda$ be a partition of $n$ and let $\gamma=C_{p\6k}(\lambda)$.
Then 
 $\chi^\lambda\in\mathrm{Irr}_{p'}(S_n)$ if and only if $|\mathcal{H}^{p^k}(\lambda)|=a$ and $\chi^{\gamma}\in\mathrm{Irr}_{p'}(S_{r})$.
 \end{prop}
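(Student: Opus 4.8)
The statement to prove is Proposition~\ref{prop: pdivisibility}, which characterizes the $p'$-degree irreducible characters of $S_n$ via the $p^k$-core and $p^k$-quotient, under the hypothesis $n = ap^k + r$ with $0 \le r < p^k$ and $1 \le a \le p-1$. The natural approach is to apply the hook-length formula together with the theory of $p$-core towers, as developed in \cite[Chapter 6]{OlssonBook}. Recall that the $p$-adic valuation $\nu_p(\chi^\lambda(1))$ admits a clean description in terms of the $p$-core tower of $\lambda$: writing the $p$-core tower as the sequence of multisets of $p$-cores $\gamma_0; \gamma_1^{(0)}, \dots; \gamma_2^{(0)}, \dots$ at levels $0, 1, 2, \dots$, and letting $w_i$ denote the total number of nodes appearing at level $i$, one has $\nu_p(n!/\prod h(\lambda)) = \sum_{i \ge 0} w_i$ minus a quantity depending only on $n$ (equivalently, $\nu_p(\chi^\lambda(1)) = \frac{1}{p-1}\big(\sum_i w_i(p-1) - \text{carries}\big)$; the precise bookkeeping is in \cite[Proposition 6.4]{OlssonBook}). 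The point is that $\chi^\lambda(1)$ is a $p'$-number precisely when the $p$-core tower of $\lambda$ is ``as small as possible'' given $n$.

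**Key steps.** First I would recall from \cite[Chapter I]{OlssonBook} the relation between the $p$-core tower of $\lambda$ and the iterated $p^k$-core / $p^k$-quotient decomposition: the nodes of the $p$-core tower at levels $\ge k$ are exactly the nodes of the $p$-core tower of the $p^k$-quotient $Q_{p^k}(\lambda)$ (reindexed), while the nodes at levels $0, \dots, k-1$ come from the $p$-core tower of the $p^k$-core $C_{p^k}(\lambda) = \gamma$. Since $|\lambda| = |\gamma| + p^k |Q_{p^k}(\lambda)|$ and $|\gamma| \le $ something forced by $r$, the hypothesis $n = ap^k + r$ with $a \le p-1$ means $|Q_{p^k}(\lambda)| \le a$ and in the extremal case $|Q_{p^k}(\lambda)| = a$, $|\gamma| = r$. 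Second, I would verify that the ``defect'' — the difference between $\nu_p(n!)$ and the maximal possible value of $\sum_i w_i$ — decouples: the contribution from levels $0,\dots,k-1$ is governed by the base-$p$ digits of $n$ below $p^k$ together with the single digit $a$, and this is minimized exactly when $|Q_{p^k}(\lambda)| = a$ (forcing $|\mathcal{H}^{p^k}(\lambda)| = a$ by the identity $|\mathcal{H}^e(\lambda)| = |Q_e(\lambda)|$); the contribution from levels $\ge k$ is exactly $\nu_p(\chi^\gamma(1))$ computed inside $S_r$. Putting these together gives $\chi^\lambda \in \mathrm{Irr}_{p'}(S_n)$ iff both $|\mathcal{H}^{p^k}(\lambda)| = a$ and $\chi^\gamma \in \mathrm{Irr}_{p'}(S_r)$.

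**Main obstacle.** The routine part is the hook-length bookkeeping; the delicate point is the ``decoupling'' claim — that the constraint $a \le p-1$ (so that $a$ is a single base-$p$ digit) is exactly what makes the level-$<k$ contribution and the level-$\ge k$ contribution independent, with no interaction or carrying between the block of digits coming from $r$ and the digit $a$. Concretely one must check that $\nu_p(n!) = \nu_p(r!) + \nu_p\!\big((ap^k)!\big) + (\text{something})$ in a way compatible with the additive structure of $p$-core towers, and that equality in the relevant inequality forces simultaneously the two stated conditions rather than allowing a trade-off between them. This is precisely where $1 \le a \le p-1$ is used, and it is why the statement is phrased in terms of a single ``digit'' $a$ rather than an arbitrary multiple of $p^k$. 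Once this is isolated, the proposition follows directly from \cite[Proposition 6.4]{OlssonBook} applied twice (once to $\lambda$ in $S_n$, once to $\gamma$ in $S_r$) together with $|\mathcal{H}^e(\lambda)| = |Q_e(\lambda)|$ and $|\lambda| = |C_e(\lambda)| + e|Q_e(\lambda)|$.
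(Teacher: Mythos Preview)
Your route through \cite[Proposition~6.4]{OlssonBook} and the $p$-core tower is exactly what the paper invokes; it gives no argument beyond that citation. There is, however, a slip in your second key step: you have the two contributions swapped. Levels $0,\dots,k-1$ of the tower are those of $\gamma=C_{p^k}(\lambda)$, so it is the low-level excess of $\sum_{i<k}w_i$ over the base-$p$ digit sum of $|\gamma|$ that equals $(p-1)\,\nu_p(\chi^{\gamma}(1))$ in $S_{|\gamma|}$, not the level-$\ge k$ piece. The levels $\ge k$ come from the $p^k$-quotient, and since $q:=|Q_{p^k}(\lambda)|\le a\le p-1$ every component of the quotient has fewer than $p$ nodes and is already a $p$-core, whence $\sum_{i\ge k}w_i=q$ automatically. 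With the roles straightened out the bookkeeping is clean: $|\gamma|=(a-q)p^k+r$ has digit sum $(a-q)$ plus the digit sum of $r$, so $\sum_i w_i$ is at least the digit sum of $n$, with equality iff $\chi^{\gamma}\in\mathrm{Irr}_{p'}(S_{|\gamma|})$; and a $p^k$-core of size $\ge p^k$ can never have $p'$-degree (its tower weights $w_0,\dots,w_{k-1}$, which vanish above level $k-1$, cannot then all be $<p$), forcing $q=a$, i.e.\ $|\mathcal{H}^{p^k}(\lambda)|=a$ and $|\gamma|=r$.
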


Proposition \ref{prop: pdivisibility} shows that if $n=ap^k+r$ with $r<p^k$, then $$\mathrm{Irr}_{p'}(S_n)=\bigcup_{\gamma\in\mathcal{P}_{p'}(r)}\mathrm{Irr}(S_n\ |\ \gamma),$$
where $\mathrm{Irr}(S_n\ |\ \gamma)=\{\chi^\lambda\in\mathrm{Irr}(S_n)\ |\ C_{p^k}(\lambda)=\gamma\}$. We are also able to control $|\mathrm{Irr}(S_n\ |\ \gamma)|$ for any $\gamma\in\mathcal{P}(r)$.
The following statement is a direct consequence of \cite[Proposition 3.7]{OlssonBook}. 

\begin{lem}\label{lem: McKay}
Let $n$ and $k$ be positive integers such that $n=ap\6k+r$, for some $r<p^k$ and some $a\in [1,p-1]$. For any $\gamma\in\mathcal{P}(r)$ we have that $$|\mathrm{Irr}(S_n\ |\ \gamma)|=|\mathrm{Irr}_{p'}(S_{ap^k})|.$$
In particular, given $P\in\mathrm{Syl}_{p}(S_{ap^k})$ we have that $|\mathrm{Irr}(S_n\ |\ \gamma)|=|\mathrm{Irr}_{p'}(N_{S_{ap^k}}(P))|$
\end{lem}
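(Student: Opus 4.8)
The plan is to reduce both quantities to a single combinatorial count governed by the structure of $p$-core towers. First I would recall from Olsson's theory that $|\mathrm{Irr}_{p'}(S_m)|$ equals the number of partitions of $m$ whose $p$-core tower has all its entries of $p'$-size in the appropriate sense; more precisely, writing $m$ in base $p$ as $m=\sum_i a_i p^i$, the well-known formula gives $|\mathrm{Irr}_{p'}(S_m)| = \prod_i |\mathrm{Irr}_{p'}(S_{a_i p^i})|^{?}$ — but the cleanest route is to use directly \cite[Proposition 3.7]{OlssonBook}, which expresses $|\mathrm{Irr}_{p'}(S_m)|$ as a product over the nodes of the $p$-adic expansion. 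The key observation is that a partition $\lambda\in\mathrm{Irr}(S_n\mid\gamma)$ is obtained from the fixed $p^k$-core $\gamma$ by adding $a$ many $p^k$-hooks, i.e.\ by choosing the $p^k$-quotient $Q_{p^k}(\lambda)$ to be an arbitrary $p^k$-tuple of partitions of total size $a$, with the $p'$-degree condition being automatic here since $a\in[1,p-1]$ already forces $|\mathcal{H}^{p^k}(\lambda)|=a$ (cf. Proposition \ref{prop: pdivisibility}, noting $\chi^\gamma$ need not be $p'$ for this particular lemma).

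Next I would set up the bijection explicitly. Since $C_{p^k}(\lambda)=\gamma$ is fixed, the map $\lambda\mapsto Q_{p^k}(\lambda)$ identifies $\mathrm{Irr}(S_n\mid\gamma)$ with the set of $p^k$-tuples of partitions of total size $a$; by the combinatorics of $p$-cores and $p$-quotients (\cite[Chapter I]{OlssonBook}) this is a genuine bijection once the core is pinned down. On the other side, every partition of $ap^k$ has empty $p^k$-core (as $a<p$ forces... actually one must be slightly careful: partitions of $ap^k$ need not have empty $p^k$-core, but those of $p'$-degree do, since a nonempty $p^k$-core would have size a positive multiple of contributions forcing $p\mid\chi^\lambda(1)$ — this is exactly Proposition \ref{prop: pdivisibility} applied with $r=0$). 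Hence $\mathrm{Irr}_{p'}(S_{ap^k})$ is in bijection with $p^k$-tuples of partitions of total size $a$ all of whose members have $p'$-degree — but again, total size $a<p$ makes the $p'$-degree condition vacuous. Comparing the two descriptions yields $|\mathrm{Irr}(S_n\mid\gamma)|=|\mathrm{Irr}_{p'}(S_{ap^k})|$, which is the first assertion.

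For the "in particular" clause I would invoke the McKay conjecture for symmetric groups at the prime $p$ — known classically, e.g.\ via Olsson's work or \cite[Proposition 3.7]{OlssonBook} together with the explicit description of $N_{S_{ap^k}}(P)$ — to replace $|\mathrm{Irr}_{p'}(S_{ap^k})|$ by $|\mathrm{Irr}_{p'}(N_{S_{ap^k}}(P))|$. The main obstacle, such as it is, is purely bookkeeping: being careful about the exact $p'$-degree conditions (making sure they are automatically satisfied in the range $a\le p-1$, so that no partition is erroneously excluded or included) and checking that the core--quotient correspondence restricts to the stated sets. No deep input is needed beyond Propositions 3.7 and 6.4 of \cite{OlssonBook} and the basic theory of $p$-quotients; I would present this as a short deduction rather than a self-contained argument.
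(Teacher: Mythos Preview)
Your proposal is correct and follows essentially the same route as the paper: both arguments use the core--quotient bijection from \cite[Proposition~3.7]{OlssonBook} to identify each side with the set of $p^k$-tuples of partitions of total size $a$, invoking Proposition~\ref{prop: pdivisibility} (with $r=0$) to see that $\mathrm{Irr}_{p'}(S_{ap^k})$ is exactly the set of partitions of $ap^k$ with empty $p^k$-core, and then quote the McKay conjecture for symmetric groups for the final clause. Your detour through $p$-core towers and your worry about an extra $p'$-degree condition on the quotient components are unnecessary---Proposition~\ref{prop: pdivisibility} applied once at level $p^k$ already gives the clean description---but the substance matches the paper's short deduction.
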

\begin{proof}
Every partition is uniquely determined by its $p^k$-core and its $p^k$-quotient, by \cite[Proposition 3.7]{OlssonBook}. The set $\mathrm{Irr}(S_n\ |\ \gamma)$ consists of all those characters labelled by partitions with $p^k$-core $\gamma$ and $p^k$-quotient of size $a$. Similarly, Proposition \ref{prop: pdivisibility} shows that $\mathrm{Irr}_{p'}(S_{ap^k})$ consists of all those characters labelled by partitions with empty $p^k$-core and $p^k$-quotient of size $a$. Hence, we conclude that $|\mathrm{Irr}(S_n\ |\ \gamma)|=|\mathrm{Irr}_{p'}(S_{ap^k})|$. The second statement is an immediate consequence of the validity of the McKay conjecture, first proved for symmetric groups in \cite{OlssonMcKay}.
\end{proof}

The following lemma is a consequence of a beautiful combinatorial result proved by Bessenrodt in \cite{Bess}. 

\begin{lem}\label{lem: bess}
Let $n=x+r$ and let $\gamma\in\mathcal{P}(r)$ be an $x$-core. Let $\mathcal{P}(n\ |\ \gamma)$ be the set of all partitions of $n$ with $x$-core equal to $\gamma$. Then 
$\mathcal{P}(n\ |\ \gamma)=\{\lambda_0,\lambda_1,\ldots,\lambda_{x-1}\},$
where for every $i\in [0,x-1]$ we have that $[\chi^{(x-i,1^i)}\times\chi^\gamma, (\chi^{\lambda_i})_{S_x\times S_r}]\neq 0$.
\end{lem}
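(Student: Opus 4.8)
The plan is to translate the character-theoretic statement into the combinatorics of rim hooks and Littlewood--Richardson coefficients, and to proceed in three steps.

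\emph{Step 1: reduction to rim hooks.} Since $\gamma$ is an $x$-core with $|\gamma|=r=n-x$, every $\lambda\in\mathcal{P}(n\mid\gamma)$ satisfies $|Q_x(\lambda)|=1$, so by the theory of $e$-cores and $e$-quotients (for instance via the abacus, as in \cite[Chapter I]{OlssonBook}) such a $\lambda$ is obtained from $\gamma$ by adding a single rim $x$-hook; in particular $\gamma\subseteq\lambda$ and $\lambda/\gamma$ is a border strip of size $x$. Displaying $\gamma$ on the abacus with $x$ runners, the $x$-core condition means that on each runner the beads are flush to the top, so there is exactly one addable rim $x$-hook per runner and hence $|\mathcal{P}(n\mid\gamma)|=x$.

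\emph{Step 2: the leg lengths.} I would then record the combinatorial fact --- this is where the result of Bessenrodt \cite{Bess} enters --- that the $x$ rim $x$-hooks addable to an $x$-core realise every leg length in $[0,x-1]$, each exactly once. On the abacus this is immediate: if $p_0,\dots,p_{x-1}$ denote the positions of the lowest beads on the $x$ runners, then the leg length of the rim hook obtained by sliding the lowest bead on a runner down one step equals the number of beads strictly between its old and new position; since that window of $x-1$ positions contains exactly one bead position of each runner $s'\ne s$, and this position is occupied precisely when $p_{s'}>p_s$, the leg length equals $\#\{s'\ne s:p_{s'}>p_s\}$, which runs bijectively over $[0,x-1]$ as $s$ varies because the $p_{s'}$ are pairwise distinct. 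Let $\lambda_i\in\mathcal{P}(n\mid\gamma)$ be the partition for which $\lambda_i/\gamma$ has leg length $i$; then $\mathcal{P}(n\mid\gamma)=\{\lambda_0,\dots,\lambda_{x-1}\}$.

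\emph{Step 3: the multiplicity.} By definition of the Littlewood--Richardson coefficients and the standard identification with skew Schur functions, $[\chi^{(x-i,1^i)}\times\chi^\gamma,(\chi^{\lambda_i})_{S_x\times S_r}]=\mathrm{LR}(\lambda_i;(x-i,1^i),\gamma)=\langle s_{\lambda_i/\gamma},s_{(x-i,1^i)}\rangle$, so it is enough to produce a single Littlewood--Richardson filling of the border strip $B:=\lambda_i/\gamma$ of content $(x-i,1^i)$. Writing the $i+1$ rows of $B$ from top to bottom as $R_1,\dots,R_{i+1}$, I would fill $R_1$ entirely with $1$'s and, for $2\le k\le i+1$, fill $R_k$ with $1$'s except for a single entry $k$ in its rightmost box. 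Weak increase along rows is clear; column-strictness is a short check (in each column of a border strip the entries produced by this rule strictly increase downward, since for $k\ge2$ the rightmost box of $R_k$ sits in the column it shares with $R_{k-1}$); and the reverse reading word $1^{|R_1|}\,2\,1^{|R_2|-1}\,3\,1^{|R_3|-1}\cdots(i+1)\,1^{|R_{i+1}|-1}$ is a lattice word since each $|R_k|\ge1$. As $\sum_k|R_k|=x$, the content is exactly $(x-i,1^i)$, so the multiplicity is at least (in fact exactly) $1$, which completes the argument.

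The step I expect to be the crux is Step 2: the clean assertion that adding rim $x$-hooks to an $x$-core produces all $x$ possible leg lengths. This is precisely the combinatorial content supplied by \cite{Bess}, and I would either quote it directly or carry out the abacus bookkeeping above; everything else is a routine dictionary between restriction multiplicities, Littlewood--Richardson coefficients and skew Schur functions, together with the explicit tableau in Step 3.
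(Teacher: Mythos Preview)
Your proposal is correct and follows essentially the same route as the paper: both arguments invoke Bessenrodt's result to see that the $x$ partitions with $x$-core $\gamma$ carry rim $x$-hooks realising every leg length in $[0,x-1]$, and then deduce the nonvanishing of the relevant Littlewood--Richardson coefficient. The only difference is one of detail: where the paper simply cites \cite[Theorem~1.1]{Bess} and then asserts ``it follows that $\mathrm{LR}(\lambda_\ell;(x-\ell,1^\ell),\gamma)\neq 0$'', you supply a self-contained abacus argument for the leg-length distribution and an explicit Littlewood--Richardson tableau for the multiplicity, so your write-up is strictly more informative but not a different approach.
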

\begin{proof}
Since $\gamma$ is an $x$-core, \cite[Theorem 1.1]{Bess} shows that $\mathcal{P}(n\ |\ \gamma)=\{\lambda_0,\lambda_1,\ldots,\lambda_{x-1}\}$ where for every $\ell\in [0,x-1]$ there exists a pair $(i,j)$ such that $h_{i,j}(\lambda_\ell)=(x-\ell,1^\ell)$. It follows that $\mathrm{LR}(\lambda_i; (x-\ell,1\6\ell), \gamma)\neq 0$ and therefore that $\chi^{(x-\ell,1^\ell)}\times\chi^\gamma$ is an irreducible constituent of the restriction of $\chi^{\lambda_\ell}$ to $S_x\times S_r$. 
\end{proof}

\subsection{Abacus combinatorics and $\beta$-sets}\label{sec: beta}
To prove our main results we need to control the $p^k$-hooks of several partitions. 
The best setting to do that is that of $\beta$-sets. In particular, it is convenient to visualize these on the so called \textit{James' abacus} for partitions. 
We devote this short section to recalling the main features of these combinatorial objects. 
We refer the reader to \cite{OlssonBook} for a complete account.

We call $\beta$-set any non-empty finite subset $X$ of $\mathbb{N}\cup \{0\}$. Given a $\beta$-set $X=\{h_1,\ldots, h_t\}$ with $h_1>h_2>\cdots>h_t$, we say that $$P(X)=(h_1-(t-1), h_2-(t-2),\ldots, h_t),$$ is the partition corresponding to $X$. 
Given $s\in\mathbb{N}$ we denote by $X^{+s}$ the set defined by $$X^{+s}=\{h_1+s,\ldots, h_t+s, s-1,s-2,\ldots, 1, 0\}.$$ 
Given $\lambda$ a partition of $n$, we say that $X$ is a $\beta$-set for $\lambda$ if $P(X)=\lambda$. 
We know from \cite[Proposition 1.3]{OlssonBook} that $X_\lambda=\{|h_{i,1}(\lambda)|\ |\ i\in [1,\ell(\lambda)]\}$ is a $\beta$-set for $\lambda$, and that a $\beta$-set $Y$ is a $\beta$-set for $\lambda$ if and only if $Y=(X_\lambda)^{+s}$, for some $s\in\mathbb{N}$. 

It is particularly convenient to use $\beta$-sets to deduce information about the hook lengths of their corresponding partitions. For instance from \cite[Corollary 1.5]{OlssonBook} we have that the following holds.

\begin{prop}\label{prop: hooksbetaset}
Let $\lambda$ be a partition of $n$, let $X$ be a $\beta$-set for $\lambda$ and let $h\in\mathbb{N}$. Then $h$ is the length of a hook in $\lambda$ if and only if there exist $x,y\in\mathbb{N}\cup\{0\}$ such that 
$$x\in X,\ \ y\notin X,\ \ \text{and}\ \ x-y=h.$$
\end{prop}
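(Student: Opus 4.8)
The final statement to prove is Proposition \ref{prop: hooksbetaset}, which characterizes hook lengths of a partition $\lambda$ in terms of a $\beta$-set $X$ for $\lambda$: namely $h$ is a hook length in $\lambda$ if and only if there exist $x \in X$, $y \notin X$ with $x - y = h$.

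\textbf{Proof proposal.} The plan is to reduce to the first-column hook lengths $\beta$-set $X_\lambda = \{h_{i,1}(\lambda) : i \in [1,\ell(\lambda)]\}$ and then bootstrap to an arbitrary $\beta$-set via the $+s$ operation. First I would recall the classical dictionary between nodes of $Y(\lambda)$ and pairs from $X_\lambda$: writing $\beta_i = h_{i,1}(\lambda) = \lambda_i + \ell(\lambda) - i$, the hook length of the node $(i,j)$ equals $\beta_i - \beta_{i'}$ where $i'$ is determined by the column $j$ — concretely, the hook lengths appearing in row $i$ are exactly the numbers $\beta_i - m$ as $m$ ranges over $\{0,1,\dots,\beta_i-1\} \setminus \{\beta_1,\dots,\beta_{i-1},\dots\} $, i.e. over the complement of $X_\lambda$ in $[0,\beta_i-1]$. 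This is precisely the content of \cite[Corollary 1.5]{OlssonBook}, so the $X = X_\lambda$ case is immediate from the cited reference; I would either quote it directly or include the short bijective argument matching removable hooks of $\lambda$ (equivalently, beads with an empty slot directly to their left on the abacus) with pairs $(x,y)$, $x \in X_\lambda$, $y \notin X_\lambda$, $x > y$.

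Next I would handle a general $\beta$-set. By \cite[Proposition 1.3]{OlssonBook}, any $\beta$-set $X$ for $\lambda$ has the form $X = (X_\lambda)^{+s}$ for some $s \in \mathbb{N}$. The key observation is that the ``difference set'' $\{x - y : x \in X,\ y \notin X\}$ is invariant under the operation $X \mapsto X^{+1}$. Indeed, $X^{+1} = \{h+1 : h \in X\} \cup \{0\}$, while $\mathbb{N} \setminus X^{+1} = \{k+1 : k \notin X\}$. So if $x' \in X^{+1}$ and $y' \notin X^{+1}$: either $x' = h+1$ with $h \in X$ and $y' = k+1$ with $k \notin X$, giving $x' - y' = h - k$; or $x' = 0$, which cannot exceed $y' \geq 1$ and so yields no positive difference; the new element $0 \in X^{+1}$ contributes nothing, and the ``lost'' small complement elements are compensated because every old pair $(h,k)$ reappears as $(h+1,k+1)$. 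Hence the set of positive values $x - y$ is unchanged, and by induction it is unchanged under $X \mapsto X^{+s}$ for all $s$. Combining this invariance with the $X = X_\lambda$ base case completes the proof.

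\textbf{Main obstacle.} Since the statement is explicitly attributed to \cite[Corollary 1.5]{OlssonBook}, there is no genuine mathematical obstacle; the only real content is bookkeeping. The one point deserving a little care is the verification that the difference set is genuinely preserved under $X \mapsto X^{+s}$ — in particular checking that no spurious new differences are created by the freshly added beads $\{0,1,\dots,s-1\}$ and that no old differences are destroyed by the shift. This is a routine but slightly fiddly check, and I would present it once for $s = 1$ and then invoke induction, rather than grinding through the general $s$ directly. Accordingly, the ``proof'' in the paper can reasonably be just a pointer to \cite[Corollary 1.5]{OlssonBook} together with the remark that the characterization is independent of the choice of $\beta$-set via the displacement-invariance above.
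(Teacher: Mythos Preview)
Your proposal is correct, and indeed you have anticipated exactly what the paper does: there is no proof given in the paper at all, the proposition is simply stated as a direct consequence of \cite[Corollary 1.5]{OlssonBook}. Your additional argument for the invariance of the difference set under $X \mapsto X^{+s}$ is a nice supplement, but the paper does not even include that step explicitly.
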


Proposition \ref{prop: hooksbetaset} shows that for every $x\in X$ and $y\in\mathbb{N}\smallsetminus X$ such that $x-y>0$, there is a corresponding hook in $\lambda$ of size $x-y$. In what follows we are going to use the notation $H(x,y)$ to denote such a hook of $\lambda$. Finally, we will often use the following reformulation of \cite[Proposition 1.8]{OlssonBook}. 

\begin{prop}\label{prop: removebetaset}
Let $\lambda$ be a partition of $n$, let $X$ be a $\beta$-set for $\lambda$ and let $H(x,y)$ be a hook in $\lambda$. Then $(X\smallsetminus\{x\})\cup\{y\}$ is a $\beta$-set for $\lambda\setminus H(x,y)$.
\end{prop}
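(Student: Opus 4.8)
The final statement is Proposition \ref{prop: removebetaset}, which asserts that removing a hook $H(x,y)$ from $\lambda$ corresponds, at the level of $\beta$-sets, to deleting $x$ from $X$ and inserting $y$. Here is how I would prove it.

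\medskip

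\textbf{Plan.} The plan is to reduce the statement to the classical bead-moving description of hook removal on the James abacus, which is precisely the content of \cite[Proposition 1.8]{OlssonBook} that the proposition claims to reformulate. First I would recall that, by Proposition \ref{prop: hooksbetaset} and the notation set up just before the statement, the hook $H(x,y)$ of $\lambda$ is the one of length $h=x-y$ determined by the pair $(x,y)$ with $x\in X$, $y\notin X$. In abacus terms, placing the beads of $X$ on a runner, the element $x$ is an occupied position and $y$ is an empty position strictly below it; removing the hook $H(x,y)$ is effected by sliding the bead from position $x$ up to position $y$. Thus the new bead configuration is exactly $(X\smallsetminus\{x\})\cup\{y\}$, and one must check two things: (i) this is again a valid $\beta$-set (a finite subset of $\mathbb{N}$ of the same cardinality as $X$), and (ii) its associated partition $P\big((X\smallsetminus\{x\})\cup\{y\}\big)$ equals $\lambda\setminus H(x,y)$.

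\medskip

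\textbf{Key steps.} Point (i) is immediate: $(X\smallsetminus\{x\})\cup\{y\}$ has the same number of elements as $X$ since $x\in X$ and $y\notin X$, and it is a subset of $\mathbb{N}$ because $y\in\mathbb{N}$; it is non-empty as long as $|X|\ge 1$, which holds by the definition of $\beta$-set. For point (ii), I would invoke \cite[Proposition 1.8]{OlssonBook} directly: that result states that if $\mu$ is obtained from $\lambda$ by removing an $h$-hook, then a $\beta$-set for $\mu$ is obtained from a given $\beta$-set $X$ for $\lambda$ by replacing some bead at position $x\in X$ by an empty position $x-h$, where the hook removed is exactly the one indexed (in the sense of Proposition \ref{prop: hooksbetaset}) by the pair $(x, x-h)$. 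Matching notation, $x-h=y$, so $(X\smallsetminus\{x\})\cup\{y\}$ is a $\beta$-set for $\lambda\setminus H(x,y)$, which is what we wanted. One should also note that different choices of $\beta$-set for $\lambda$ (which differ by the operation $X\mapsto X^{+s}$) yield, after this bead move, $\beta$-sets for $\lambda\setminus H(x,y)$ that again differ by the same shift operation, so the statement is independent of the chosen representative; this follows because the map $X\mapsto (X\smallsetminus\{x\})\cup\{y\}$ commutes with $X\mapsto X^{+s}$ when $x,y$ are shifted accordingly.

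\medskip

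\textbf{Main obstacle.} Since the proposition is explicitly billed as a reformulation of \cite[Proposition 1.8]{OlssonBook}, there is no serious mathematical obstacle; the only real work is bookkeeping, namely making sure that the indexing of hooks by pairs $(x,y)$ from Proposition \ref{prop: hooksbetaset} coincides with Olsson's indexing of hook removals by bead moves, and that the length bookkeeping $h=x-y$ is consistent on both sides. I expect this translation between the two conventions — and verifying that the removed hook is the \emph{same} hook, not merely a hook of the same length — to be the only point requiring any care, and it is handled by noting that both descriptions single out the hook via the same combinatorial datum, the pair consisting of an occupied position $x$ and the empty position $y$ directly reached by sliding that bead up by $h$ steps.
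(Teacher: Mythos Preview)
Your proposal is correct and matches the paper's treatment: the paper gives no proof of its own, simply presenting the statement as a reformulation of \cite[Proposition 1.8]{OlssonBook}, which is exactly the reference you invoke. Your additional bookkeeping remarks (well-definedness of the new $\beta$-set and compatibility with the shift $X\mapsto X^{+s}$) are sound but go slightly beyond what the paper records.
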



We pause to give a small example to familiarize with the objects introduced so far. 

\begin{example}\label{ex: 1}
Let $\lambda=(3,2,2)$, then $X=\{7,5,4,1,0\}$ is a $\beta$-set for $\lambda$. We observe that $|h_{1,2}(\lambda)|=4$, and in fact, by direct inspection of the hook lengths in the Young diagram $Y(\lambda)$ we see that $h_{1,2}(\lambda)$ is the only hook of length $4$ in $\lambda$. This can be easily read off the $\beta$-set $X$ as well. In fact, $3\notin X$, $7\in X$ and $7-3=4$. This shows that $H(7,3)$ is a hook of length $4$ in $\lambda$. Moreover, if $x\in X$ and $y\notin X$ then $|H(x,y)|=4$ implies $x=7$ and $y=3$. We conclude that $H(7,3)$ is the only hook of length $4$ in $\lambda$ and therefore that $H(7,3)=h_{1,2}(\lambda)$. If we now let $Y=(X\smallsetminus\{7\})\cup\{3\}$, we have that $Y=\{5,4,3,1,0\}$ is a $\beta$-set for the partition $(1,1,1)$. As explained in the discussion after Proposition \ref{prop: hooksbetaset}, we have that $(1,1,1)=\lambda\setminus h_{1,2}(\lambda)$. 
\end{example}

We now fix $r\in\mathbb{N}$. Given $X$ a $\beta$-set and $\ell\in [0,r-1]$, we let $$X^\ell=\{a\in\mathbb{N}\cup\{0\}\ |\ r\cdot a+\ell\in X\}.$$
In particular we have that $X=\{r\cdot a+\ell\ |\ \ell\in [0,r-1]\ \text{and}\ a\in X^\ell\}$. 
As explained in \cite[Chapter 1]{OlssonBook} we have that if
 $\lambda=P(X)$, then $C_r(\lambda)=P(Y)$, where $$Y=\{r\cdot a+\ell\ |\ \ell\in [0,r-1]\ \text{and}\ 0\leq a\leq |X^\ell|-1\}.$$
Moreover, $(P(X^0),P(X^1),\ldots, P(X^{r-1}))=:Q_{r}(\lambda)$ defines the $r$-quotient of $\lambda$. 

\begin{rem}\label{rem: quotient}
Notice that the definition of $Q_{r}(\lambda)$ given above depends on the choice of the $\beta$-set $X$. In particular, we have that the $r$-quotient is defined up to a cyclic permutation of its components. 
In \cite[Chapter 1]{OlssonBook} this problem is resolved by always choosing $\beta$-sets of sizes divisible by $r$. We care to stress that in this paper we will not need this level of accuracy, because we will always be interested only in the sizes of the components of certain $r$-quotients, not in their relative positions. 
\end{rem}

All these considerations can be easily visualized by placing the elements of $X$ on a $r$-abacus, in the following way. The $r$-abacus has $r$ runners going from north to south numbered $0,1,\ldots, r-1$. On each runner the positions are labelled in increasing order $0,1,2,\ldots$ from top to bottom. The $r$-abacus configuration corresponding to $X$ is then obtained by placing a bead in position $a$ of runner $\ell$ if and only if $r\cdot a+\ell\in X$. In particular, the set $X^\ell$ records the positions of the beads lying on the $\ell$-th runner. 
 It is now particularly easy to spot $r$-hooks, in fact $\lambda:=P(X)$ admits a $r$-hook if and only if in the $r$-abacus configuration corresponding to $X$ there is a runner admitting a bead and a vacant position somewhere above that bead.
It is now clear that the $r$-abacus configuration corresponding to the $r$-core of $\lambda$ is obtained from that of $\lambda$ by sliding all beads as north as possible on each runner. This in turn corresponds to our first statement, namely that the $\beta$-set $Y$, as described above, is a $\beta$-set for $C_r(\lambda)$. 

\smallskip

\subsection{Characters of wreath products}\label{sec: 2.2}
We conclude this section by fixing our notation for characters of wreath products. This is based on \cite[Chapter 4]{JK}.
Given a finite group $G$  and a natural number $n$, we denote by $G^{\times n}$ the direct product of $n$ copies of $G$. For any subgroup $H\le\fS_n$, the permutation action of $\fS_n$ on the direct factors of $G^{\times n}$ induces an action of $\fS_n$ (and therefore of $H\le \fS_n$) via automorphisms of $G^{\times n}$, giving the wreath product $G\wr H= G^{\times n}\rtimes H$. The normal subgroup $G^{\times n}$ is sometimes called the base group of the wreath product $G\wr H$. 

Following \cite[Chapter 4]{JK}, the elements of $G\wr H$ are denoted by $(g_1,\dotsc,g_n;h)$ for $g_i\in G$ and $h\in H$. 
Let $V$ be a $\mathbb{C}G$--module and let $\phi$ be the character afforded by $V$. 
We let $V^{\otimes n}=V\otimes\cdots\otimes V$ ($n$ copies) be the corresponding $\mathbb{C}G^{\times n}$--module. The left action of $G\wr H$ on $V^{\otimes n}$ defined by linearly extending
$$(g_1,\dotsc,g_n;h)\ :\quad v_1\otimes \cdots\otimes v_n \longmapsto g_1v_{h^{-1}(1)}\otimes\cdots\otimes g_nv_{h^{-1}(n)}$$
turns $V^{\otimes n}$ into a $\mathbb{C}(G\wr H)$--module, which we denote by $\widetilde{V^{\otimes n}}$ (see \cite[(4.3.7)]{JK}). We let $\tilde{\phi}$ denote the character afforded by the representation $\widetilde{V^{\otimes n}}$. For any character $\psi$ of $H$, we abuse notation and let $\psi$ also denote its inflation to $G\wr H$. Finally, we introduce the symbol
$$\mathcal{X}(\phi;\psi)=\tilde{\phi}\cdot\psi$$
to denote the  character of $G\wr H$ obtained as the
product of $\tilde{\phi}$ and $\psi$.

Let $\mathrm{Lin}(G)$ be the set of linear characters of $G$. 
Basic Clifford theory \cite[Chapter 6]{IBook} shows that the set $\mathrm{Lin}(G\wr H)$ has the following form:
$$\mathrm{Lin}(G\wr H)=\{\mathcal{X}(\phi;\psi)\ |\ \phi\in\mathrm{Lin}(G)\ \text{and}\ \psi\in\mathrm{Lin}(H)\}.$$

\section{Preliminary results}

In this section, we collect several results primarily concerning the degrees of irreducible characters of symmetric groups and their Sylow normalizers. 
Some of these statements follow from known results, while others represent genuine extensions that may be of independent interest beyond their role in the proof of Theorem \ref{thm: MAIN}.

We begin by fixing the notation for Sylow subgroups and their normalizers. Much of this notation and background is borrowed from \cite{GANT}.

\subsection{Sylow subgroups and normalisers}\label{subsec: normalisers}
We use this section to recall some fundamental facts about irreducible Sylow $p$-subgroups of symmetric groups and their normalizers. 

Let $n\in\mathbb{N}$ and let $n=\sum_{j=1}^ta_jp^{k_j}+a_0$ be the $p$-adic expansion of $n$, with $k_t>k_{t-1}>\cdots>k_1\geq 1$. The integer $t$ is called $p$-\textit{adic length} of $n$. It is well known that if we denote by $P_n$ a fixed Sylow $p$-subgroup of $S_n$ then 
$$P_n=(P_{p^{k_t}})^{\times a_t}\times \cdots\times (P_{p^{k_1}})^{\times a_1},$$
where $P_{p^\ell}=P_{p^{\ell-1}}\wr P_p\cong C_p\wr C_p\wr\cdots\wr C_p$ is the iterated wreath product of $\ell$ copies of $C_p$, the cyclic group of order $p$.

Unless otherwise stated, for the rest of this article we will use the symbol $N_n$ to denote the subgroup $N_{S_n}(P_n)$ of $S_n$. It is well known that $N_{p^k}\cong P_{p^k}\rtimes (C_{p-1})^{\times k}$ and that
 $N_{ap^k}=N_{p^{k}}\wr S_{a}$ for any $a\in [1,p-1]$. Moreover, we have that 
$$N_n=N_{a_tp^{k_t}}\times \cdots\times N_{a_2p^{k_2}}\times N_{a_1p^{k_1}}\times S_{a_0}.$$
Thus $\mathrm{Irr}(N_n)=\{\theta_1\times\cdots\times\theta_t\times\chi^\gamma\ |\ \theta_i\in\mathrm{Irr}(N_{a_ip^{k_i}})\ \text{for any}\ i\in [1,t],\ \text{and}\ \gamma\in\mathcal{P}(a_0)\}$.

We now aim at describing the irreducible $p'$-degree characters of $N_{ap\6k}\cong N_{p\6k}\wr S_a$. In order to do this, we first need to recall a few properties of the set $\mathrm{Lin}(P_{p\6k})$. 
Following the notation introduced in \cite[Section 2.1]{GANT} we let $C_p=\langle g\rangle$ be a cyclic group of order $p$ generated by $g$. 
Let $\mathbb{C}_p$ denote the set of complex $p$th roots of unity and let 
$$\mathrm{Irr}(C_p)=\mathrm{Lin}(C_p)=\{\phi_z\ |\ z\in\mathbb{C}_p\},$$ where $\phi_z(g)=z$ for all $z\in \mathbb{C}_p$. 
This shows that the irreducible characters of $P_p\cong C_p$ are naturally labelled by the elements of $\mathbb{C}_p$. A basic application of Clifford theory allows us to extend this description to the set of linear characters of $P_{p^k}$ for any $k\in\mathbb{N}$. 
As explained in \cite[Section 2.1]{GANT}, there is a natural bijection between the set $\mathbb{C}_p^{\times k}$ and $\mathrm{Lin}(P_{p^k})$. From now on, we will denote by $\mathcal{X}(x_1,\ldots, x_k)$ the element of $\mathrm{Lin}(P_{p^k})$ corresponding to the sequence $(x_1, x_2,\ldots, x_k)\in \mathbb{C}_p^{\times k}$. Without repeating all the details concerning this bijection, we just remark that 
$\mathcal{X}(x_1,\ldots, x_k)$ is recursively defined as $$\mathcal{X}(x_1,\ldots, x_k)=\mathcal{X}(\mathcal{X}(x_1,\ldots, x_{k-1});\phi_{x_k})\in P_{p^{k-1}}\wr P_p.$$ 

Given $s=(s_1,\ldots, s_k)\in\mathbb{C}_p^{\times k}$ we let $Z(s)=\{\ell\in [1,k]\ |\ s_\ell=1\}$ and $\zeta(s)=|Z(s)|$. The following statement is an immediate consequence of the more general results proved in \cite[Proposition 2.7 and Corollary 2.8]{GANT}. 

\begin{prop}\label{prop: normAction}
Let $s, t\in \mathbb{C}_p^{\times k}$. The corresponding irreducible characters $\mathcal{X}(s)$ and $\mathcal{X}(t)$ are $N_{p^k}$-conjugate if and only if $Z(s)=Z(t)$. Moreover we have that $$\mathrm{Stab}_{N_{p^k}}(\mathcal{X}(s))\cong P_{p^k}\rtimes (C_{p-1})^{\times \zeta(s)}.$$
\end{prop}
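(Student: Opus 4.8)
The plan is to proceed by induction on $k$, using the recursive structure $P_{p^k} = P_{p^{k-1}} \wr P_p$ and $N_{p^k} \cong P_{p^k} \rtimes (C_{p-1})^{\times k}$, together with the cited results \cite[Proposition 2.7 and Corollary 2.8]{GANT}. The base case $k=1$ is elementary: here $P_p \cong C_p$, $N_p \cong C_p \rtimes C_{p-1}$, and the $C_{p-1}$-action permutes the nontrivial linear characters $\phi_z$ ($z \neq 1$) in a single orbit while fixing $\phi_1$; thus $\mathcal{X}(s)$ and $\mathcal{X}(t)$ are $N_p$-conjugate iff both are trivial or both nontrivial, i.e.\ iff $Z(s) = Z(t)$, and the stabiliser of $\phi_1$ is all of $N_p \cong P_p \rtimes C_{p-1}$ while the stabiliser of a nontrivial character is just $P_p$.

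For the inductive step, I would write $s = (s', s_k)$ with $s' = (s_1,\dots,s_{k-1}) \in \mathbb{C}_p^{\times(k-1)}$, so that $\mathcal{X}(s) = \mathcal{X}(\mathcal{X}(s'); \phi_{s_k})$ is a character of $P_{p^{k-1}} \wr P_p$ built from the $P_{p^{k-1}}$-character $\mathcal{X}(s')$ and the $P_p$-character $\phi_{s_k}$. The group $N_{p^k}$ acts on $\mathrm{Lin}(P_{p^k})$, and the first task is to identify this action in terms of the pair $(\mathcal{X}(s'), \phi_{s_k})$: the top $C_{p-1}$-factor acts on the $\phi_{s_k}$-coordinate exactly as in the $k=1$ case (permuting nontrivial values, fixing the trivial one), while the remaining part of $N_{p^k}$ acts diagonally on the base group through the $N_{p^{k-1}}$-action on $\mathcal{X}(s')$. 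Here is where \cite[Proposition 2.7 and Corollary 2.8]{GANT} does the real work: it governs precisely how $N_{p^{k-1}} \wr S_p$-type actions interact with the wreath construction $\mathcal{X}(-;-)$, and in particular tells us that two such induced/built characters are conjugate iff the ingredient characters are conjugate ``coordinate by coordinate.'' Combining this with the inductive hypothesis $Z(s') = Z(t')$ for the base ingredient and the $k=1$ analysis for the top ingredient ($s_k = 1 \iff t_k = 1$) yields $Z(s) = Z(t)$, as desired.

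For the stabiliser statement, I would argue that $\mathrm{Stab}_{N_{p^k}}(\mathcal{X}(s))$ splits as the product of the stabiliser of $\mathcal{X}(s')$ inside the ``$N_{p^{k-1}}$-part'' and the stabiliser of $\phi_{s_k}$ inside the top $C_{p-1}$. By induction the former is $P_{p^{k-1}} \rtimes (C_{p-1})^{\times \zeta(s')}$, and by the base case the latter contributes a further $C_{p-1}$ factor exactly when $s_k = 1$. Since the base group $P_{p^k} = P_{p^{k-1}} \wr P_p$ always lies in the stabiliser (linear characters are trivial on commutators, hence $P_{p^k}$-invariant), assembling these pieces gives $\mathrm{Stab}_{N_{p^k}}(\mathcal{X}(s)) \cong P_{p^k} \rtimes (C_{p-1})^{\times(\zeta(s') + [s_k=1])} = P_{p^k} \rtimes (C_{p-1})^{\times \zeta(s)}$.

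The main obstacle is the inductive step's bookkeeping: one must carefully track how the complement $(C_{p-1})^{\times k}$ in $N_{p^k}$ decomposes relative to the wreath decomposition $P_{p^{k-1}} \wr P_p$, and verify that the action on the base group is genuinely the diagonal $N_{p^{k-1}}$-action on each tensor factor rather than something twisted by the top $P_p$ or $C_{p-1}$. This is exactly the content abstracted away in \cite[Proposition 2.7 and Corollary 2.8]{GANT}, so in practice the proof reduces to quoting those statements correctly and checking the compatibility of the labelling $\mathcal{X}(x_1,\dots,x_k)$ with the recursion; the combinatorial conclusion about $Z(s)$ and $\zeta(s)$ then follows formally.
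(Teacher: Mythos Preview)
The paper does not actually give a proof of this proposition: it states it as ``an immediate consequence of the more general results proved in \cite[Proposition 2.7 and Corollary 2.8]{GANT}'' and nothing more. Your proposal is therefore strictly more detailed than what the paper does --- you are sketching the inductive argument that \emph{underlies} those cited results rather than merely invoking them. The outline you give (base case $k=1$ via the explicit $C_{p-1}$-action on $\mathrm{Irr}(C_p)$, inductive step via the wreath decomposition $\mathcal{X}(s)=\mathcal{X}(\mathcal{X}(s');\phi_{s_k})$ and the splitting of the $(C_{p-1})^{\times k}$-complement into a top factor acting on $\phi_{s_k}$ and a diagonal $(C_{p-1})^{\times(k-1)}$-action on the base) is correct in spirit and matches the structure of the arguments in \cite{GANT}. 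The only point to watch is the one you flag yourself: verifying that the complement $(C_{p-1})^{\times k}$ really does act level-by-level in this clean way requires knowing the explicit embedding used in \cite{GANT}, and that is precisely what Proposition~2.7 there records. So your proposal and the paper's approach coincide at the level of content; the paper simply outsources the entire argument to the citation.
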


Proposition \ref{prop: normAction} allows us to describe the set of irreducible characters of $N_{p^k}$ of degree coprime to $p$. 
Let $\mathcal{L}_k$ be a set of representatives for the orbits of $N_{p^k}$ in its action on $\mathrm{Lin}(P_{p^k})$. Another elementary application of Clifford theory and of Gallagher's theorem \cite[Theorem 6.11 and Corollary 6.17]{IBook} implies that 
$$\mathrm{Irr}_{p'}(N_{p^k})=\bigcup_{\mathcal{X}\in\mathcal{L}_k}\mathrm{Irr}(N_{p^k}\ |\ \mathcal{X}),$$
where, $\mathrm{Irr}(N_{p^k}\ |\ \mathcal{X})$ is the subset of $\mathrm{Irr}(N_{p^k})$ consisting of all those irreducible characters $\theta$ such that $[\theta_{P_{p^k}}, \mathcal{X}]\neq 0$. 

\begin{prop}\label{prop: degreesNormPpower}
Let $s\in  \mathbb{C}_p^{\times k}$ and let $\theta\in \mathrm{Irr}(N_{p^k}\ |\ \mathcal{X}(s))$. Then $\theta(1)=(p-1)^{k-\zeta(s)}$.
\end{prop}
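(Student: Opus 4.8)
The plan is to compute $\theta(1)$ via Clifford theory over the normal subgroup $P_{p^k}\trianglelefteq N_{p^k}$, using that $\mathcal{X}(s)$ is linear. First I would recall that $N_{p^k}\cong P_{p^k}\rtimes (C_{p-1})^{\times k}$, so the quotient $N_{p^k}/P_{p^k}$ is abelian of order $(p-1)^k$. Since $\theta\in\mathrm{Irr}(N_{p^k}\mid\mathcal{X}(s))$ lies over the linear character $\mathcal{X}=\mathcal{X}(s)$ of $P_{p^k}$, Clifford's theorem tells us $\theta(1)=[N_{p^k}:\mathrm{Stab}_{N_{p^k}}(\mathcal{X})]\cdot\psi(1)$ for some $\psi\in\mathrm{Irr}(\mathrm{Stab}_{N_{p^k}}(\mathcal{X})\mid\mathcal{X})$.

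Next I would invoke Proposition~\ref{prop: normAction}, which gives $\mathrm{Stab}_{N_{p^k}}(\mathcal{X}(s))\cong P_{p^k}\rtimes (C_{p-1})^{\times\zeta(s)}$. Hence the index is
\[
[N_{p^k}:\mathrm{Stab}_{N_{p^k}}(\mathcal{X}(s))]=\frac{|N_{p^k}|}{|P_{p^k}|\,(p-1)^{\zeta(s)}}=\frac{(p-1)^k}{(p-1)^{\zeta(s)}}=(p-1)^{k-\zeta(s)}.
\]
It then remains to show that the ``stable part'' $\psi$ is linear, i.e. $\psi(1)=1$. Here the key point is that $\mathcal{X}$ extends to its full stabiliser $T:=\mathrm{Stab}_{N_{p^k}}(\mathcal{X}(s))$: indeed $\mathcal{X}$ is a linear (hence irreducible) character of $P_{p^k}$ invariant under $T$, and the quotient $T/P_{p^k}\cong (C_{p-1})^{\times\zeta(s)}$ is cyclic-by-abelian — in fact abelian — of order coprime to $p=|P_{p^k}|/|\,\cdot\,|$... more precisely, since $\gcd(|P_{p^k}|, |T/P_{p^k}|)=1$ (as $|P_{p^k}|$ is a $p$-power and $|T/P_{p^k}|=(p-1)^{\zeta(s)}$ is coprime to $p$), by the theory of character extensions in the coprime case (\cite[Corollary 6.28 or Theorem 8.16]{IBook}) $\mathcal{X}$ extends to $T$. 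By Gallagher's theorem \cite[Corollary 6.17]{IBook}, the characters of $T$ lying over $\mathcal{X}$ are exactly $\{\hat{\mathcal{X}}\cdot\beta\mid\beta\in\mathrm{Irr}(T/P_{p^k})\}$ where $\hat{\mathcal{X}}$ is a fixed extension; since $T/P_{p^k}$ is abelian, every such $\beta$ is linear, so $\psi(1)=\hat{\mathcal{X}}(1)\beta(1)=1$.

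Combining these, $\theta(1)=(p-1)^{k-\zeta(s)}\cdot 1=(p-1)^{k-\zeta(s)}$, as claimed. The main obstacle — really the only non-formal point — is justifying that $\mathcal{X}(s)$ extends to its stabiliser $T$; once that is in place, the degree formula is immediate from Clifford correspondence plus Gallagher. One should double-check that the coprimeness hypothesis genuinely applies (it does, since a $p$-group and a group of order a power of $p-1$ have coprime orders), and that Proposition~\ref{prop: normAction} is being applied to the correct character; no serious computation is needed.
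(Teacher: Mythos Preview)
Your proof is correct and follows essentially the same route as the paper: Clifford correspondence over $P_{p^k}\trianglelefteq N_{p^k}$, extension of $\mathcal{X}(s)$ to its stabiliser, Gallagher to see that every character of the stabiliser above $\mathcal{X}(s)$ is linear, and finally the index computation via Proposition~\ref{prop: normAction}. The only cosmetic difference is that the paper cites \cite[Corollary 6.27]{IBook} for the extension step whereas you invoke the coprime extension theorem (\cite[Corollary 6.28]{IBook}); your justification is in fact the cleaner one here, since $T/P_{p^k}\cong(C_{p-1})^{\times\zeta(s)}$ need not be cyclic.
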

\begin{proof}
Let $N=N_{p^k}$ and let $N_{\mathcal{X}}$ be the stabilizer of $\mathcal{X}(s)$ in $N$. 
By \cite[Theorem 6.11, and Corollaries 6.17 and 6.27]{IBook} we know that there exists $\alpha\in\mathrm{Irr}(N_{\mathcal{X}})$ and $\beta\in \mathrm{Irr}(N_{\mathcal{X}}/P_{p^k})$ such that $\alpha$ is an extension of $\mathcal{X}(s)$ and such that $\theta=(\alpha\cdot\beta)^{N}$. Since $N_{\mathcal{X}}/P_{p^k}$ is abelian we have that $\alpha(1)=\beta(1)=1$ and therefore that 
$\theta(1)=|N:N_{\mathcal{X}}|$. Proposition \ref{prop: normAction} now shows that $\theta(1)=(p-1)^{k-\zeta(s)}$. 
\end{proof}

Later in the article we will need to show that the normalizer of a Sylow $p$-subgroup of $S_n$ admits \textit{many} irreducible characters of small degree coprime to $p$. 
For this reason, here we start by counting these in the case of $N_{p^k}$. In particular we will focus on irreducible characters of degree $1$ and of degree $p-1$. 

Since the trivial character $1_{P_{p\6k}}=\mathcal{X}(1,1,\ldots ,1)$, we easily deduce the following statement as a consequence of Proposition \ref{prop: degreesNormPpower}.

\begin{cor}\label{cor: linN}
Let $k\in\mathbb{N}$. Then $\mathrm{Lin}(N_{p^k})=\mathrm{Irr}(N_{p^k}\ |\ 1_{P_{p\6k}})$. In particular we have that $|\mathrm{Lin}(N_{p^k})|=(p-1)^k$.
\end{cor}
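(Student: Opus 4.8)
The plan is to derive Corollary~\ref{cor: linN} directly from Proposition~\ref{prop: degreesNormPpower} together with Corollary~6.17 of \cite{IBook} (Gallagher's theorem), in two short steps.

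First I would identify the linear characters of $N_{p^k}$ among the sets $\mathrm{Irr}(N_{p^k}\mid \mathcal{X}(s))$. By Proposition~\ref{prop: degreesNormPpower}, every $\theta\in\mathrm{Irr}(N_{p^k}\mid\mathcal{X}(s))$ has degree $(p-1)^{k-\zeta(s)}$, which equals $1$ if and only if $\zeta(s)=k$, i.e. $s=(1,1,\ldots,1)$, i.e. $\mathcal{X}(s)=1_{P_{p^k}}$. Since $\mathrm{Irr}(N_{p^k})$ is the disjoint union of the sets $\mathrm{Irr}(N_{p^k}\mid\mathcal{X})$ as $\mathcal{X}$ ranges over the orbit representatives $\mathcal{L}_k$, and since $\mathrm{Lin}(N_{p^k})\subseteq\mathrm{Irr}(N_{p^k})$, this shows $\mathrm{Lin}(N_{p^k})=\mathrm{Irr}(N_{p^k}\mid 1_{P_{p^k}})$; note every element of this last set genuinely is linear, again by the degree formula with $\zeta(s)=k$. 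This uses the remark in the text that $1_{P_{p^k}}=\mathcal{X}(1,\ldots,1)$.

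Second I would compute the cardinality. Here $N_{\mathcal{X}}$, the stabiliser of $1_{P_{p^k}}$ in $N_{p^k}$, is all of $N_{p^k}$, and $N_{p^k}/P_{p^k}\cong(C_{p-1})^{\times k}$ is abelian of order $(p-1)^k$. By Gallagher's theorem (\cite[Corollary 6.17]{IBook}), since $1_{P_{p^k}}$ extends to $N_{p^k}$ (the trivial character always does), the characters in $\mathrm{Irr}(N_{p^k}\mid 1_{P_{p^k}})$ are exactly the inflations to $N_{p^k}$ of the irreducible characters of $N_{p^k}/P_{p^k}$; as this quotient is abelian there are precisely $(p-1)^k$ of them. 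Hence $|\mathrm{Lin}(N_{p^k})|=(p-1)^k$.

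I do not expect any serious obstacle: the result is essentially a direct specialisation of Proposition~\ref{prop: degreesNormPpower} (and its proof) to the case $s=(1,\ldots,1)$. The only point requiring a little care is to argue the \emph{equality} $\mathrm{Lin}(N_{p^k})=\mathrm{Irr}(N_{p^k}\mid 1_{P_{p^k}})$ rather than just an inclusion — but this follows at once from the degree formula, which forces any linear character to restrict to the trivial character of $P_{p^k}$ (a $p$-group of which a linear character has $p$-power-order image in $\mathbb{C}^\times$, hence trivial image) and conversely forces every character over $1_{P_{p^k}}$ to be linear.
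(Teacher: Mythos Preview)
Your proposal is correct and follows essentially the same route as the paper, which simply records that the corollary is an immediate consequence of Proposition~\ref{prop: degreesNormPpower} once one notes $1_{P_{p^k}}=\mathcal{X}(1,\ldots,1)$. Two minor remarks: the displayed union in the paper covers $\mathrm{Irr}_{p'}(N_{p^k})$ rather than all of $\mathrm{Irr}(N_{p^k})$, but this is harmless since $\mathrm{Lin}(N_{p^k})\subseteq\mathrm{Irr}_{p'}(N_{p^k})$; and your closing parenthetical (``$p$-power-order image in $\mathbb{C}^\times$, hence trivial'') does not stand on its own as written, but it is unnecessary since the degree formula from Proposition~\ref{prop: degreesNormPpower} already forces $\zeta(s)=k$.
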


Moreover, we denote by $\mathrm{QLin}(N_{p^k})$ the set defined by $$\mathrm{QLin}(N_{p^k})=\{\eta\in\mathrm{Irr}(N_{p^k})\ |\ \eta(1)=p-1\}.$$
The notation is chosen to hint at the small degree of these characters, in a sense, \textit{quasi-linear}.


\begin{cor}\label{cor: QlinN}
Let $k\in\mathbb{N}$. Then  $|\mathrm{QLin}(N_{p^k})|=k(p-1)^{k-1}$.
\end{cor}
\begin{proof}
For any $i\in [1,k]$ let $s(i)=(s(i)_1, s(i)_2,\ldots, s(i)_k)\in\mathbb{C}_p^{\times k}$ be the sequence defined by $s(i)_i=e^{\frac{2\pi i}{p}}$ and $s(i)_\ell=1$, for every $\ell\neq i$. Clearly $\zeta(s(i))=k-1$ for every $i\in [1,k]$. By \cite[Proposition 2.7]{GANT} we have that the $N_{p^k}$-orbits of $\mathcal{X}(s(i))$ and 
$\mathcal{X}(s(j))$ coincide if and only if $i=j$. Moreover for any $s\in \mathbb{C}_p^{\times k}$ with $\zeta(s)=k-1$, there exists an index $i\in [1,k]$ such that $\mathcal{X}(s)$ is $N_{p^k}$-conjugated to $\mathcal{X}(s(i))$. 
These observations, used together with Proposition \ref{prop: degreesNormPpower} allow us to deduce that $$\mathrm{QLin}(N_{p^k})=\bigcup_{i=1}^k\mathrm{Irr}(N_{p^k}\ |\ \mathcal{X}(s(i))).$$
Let $N_i$ be the stabilizer of $\mathcal{X}(s(i))$ in $N_{p^k}$. 
From Proposition \ref{prop: normAction}, for every $i\in [1,k]$ we have that $N_i\cong P_{p^k}\rtimes (C_{p-1})^{k-1}$. Using \cite[Corollary 6.17]{IBook} we conclude that $$|\mathrm{Irr}(N_{p^k}\ |\ \mathcal{X}(s(i)))|=|N_i : P_{p^k}|=(p-1)^{k-1},$$ and therefore that $|\mathrm{QLin}(N_{p^k})|=k(p-1)^{k-1}$, as desired. 
\end{proof}

There is a specific linear character of $P_n$ that we will need to recall quite often later in this article. For this reason we fix the following ad hoc notation.

\begin{define}\label{def: starlinear}
Let $\omega=e^{\frac{2\pi i}{p}}$ 
and let $s=(\omega, \omega, \ldots, \omega)\in \mathbb{C}_p^{\times k}$. We denote by $\mathcal{X}^\star_k$ the linear character of $P_{p^k}$ corresponding to $s$. More generally, given $n\in\mathbb{N}$ with $p$-adic expansion given by $n=p^{k_1}+\cdots+p^{k_z}+a_0$ for some $k_1\geq k_2\geq\cdots\geq k_z\geq 1$ and some $a_0\in [0,p-1]$, we denote by $\mathcal{X}^\star_{(n)}$ the linear character of $P_n$ defined by $$\mathcal{X}^\star_{(n)}=\mathcal{X}^\star_{k_1}\times\mathcal{X}^\star_{k_2}\times\cdots\times\mathcal{X}^\star_{k_z}.$$
Finally, for any $k\in\mathbb{N}$ we define the integer $m^\star(k)$ as follows: 
$$m^\star(1)=p-1,\ \text{and}\ m^\star(k)=p^k-p^{k-1}-p^{k-2},\ \text{for any}\ k\geq 2.$$ 
\end{define}

\begin{rem}
Notice that $\mathcal{X}^\star_{(n)}\neq \mathcal{X}^\star_{n}$, for any $n\in\mathbb{N}$. In fact $\mathcal{X}^\star_{(n)}\in\mathrm{Lin}(P_n)$ while $\mathcal{X}^\star_{n}\in\mathrm{Lin}(P_{p^n})$. 
We do realize that this notation might seem strange at this point, but we firmly believe it is the most convenient for the following sections of this article. 
\end{rem}

Given $\theta\in \mathrm{Irr}(P_n)$ we let $\Omega(\theta)=\{\lambda\in\mathcal{P}(n)\ |\ [(\chi^\lambda)_{P_n}, \theta]\neq 0\}$. 
From \cite[Theorem 5.7]{GL3} we deduce that the following holds. 

\begin{thm}\label{thm: GL3}
Let $p\geq 5$ be a prime and let $k\in\mathbb{N}$. 
Then $$\mathcal{B}_{p^k}(m^\star(k))\subseteq\Omega(\mathcal{X}^\star_k).$$ 

Moreover, given $n\in\mathbb{N}$ such that $n=p^{k_1}+\cdots+p^{k_z}+a_0$ is the $p$-adic expansion of $n$ for some $k_1\geq k_2\geq\cdots\geq k_z\geq 1$ and some $a_0\in [0,p-1]$, then 
$$\mathcal{B}_n(T)\subseteq \Omega(\mathcal{X}^\star_{(n)}),$$
where $T=m^\star(k_1)+m^\star(k_2)+\cdots+m^\star(k_z)+a_0$.
\end{thm}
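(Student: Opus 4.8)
The plan is to derive the second statement from the first by a standard Littlewood--Richardson argument, leveraging the product structure $P_n = P_{p^{k_1}} \times \cdots \times P_{p^{k_z}} \times P_{a_0}$ and $\mathcal{X}^\star_{(n)} = \mathcal{X}^\star_{k_1} \times \cdots \times \mathcal{X}^\star_{k_z} \times 1_{P_{a_0}}$ (noting $P_{a_0}$ is trivial since $a_0 < p$, so its only linear character is trivial). So it suffices to show: if $\lambda \in \mathcal{B}_n(T)$, then $(\chi^\lambda)_{P_n}$ contains $\mathcal{X}^\star_{(n)}$ as a constituent. By transitivity of restriction, $[(\chi^\lambda)_{P_n}, \mathcal{X}^\star_{(n)}] \neq 0$ as soon as there exist partitions $\mu^{(j)}$ with $|\mu^{(j)}| = p^{k_j}$ for $j \in [1,z]$ and $\delta \in \mathcal{P}(a_0)$ such that $\mathrm{LR}(\lambda; \mu^{(1)}, \ldots, \mu^{(z)}, \delta) \neq 0$ and $[(\chi^{\mu^{(j)}})_{P_{p^{k_j}}}, \mathcal{X}^\star_{k_j}] \neq 0$ for every $j$. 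The first part of the theorem tells us this last condition holds whenever $\mu^{(j)} \in \mathcal{B}_{p^{k_j}}(m^\star(k_j))$.

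So the combinatorial core is: given $\lambda \in \mathcal{B}_n(T)$ with $T = \sum_j m^\star(k_j) + a_0$, produce a Littlewood--Richardson decomposition of $\lambda$ into pieces $\mu^{(1)}, \ldots, \mu^{(z)}, \delta$ of the prescribed sizes with $\mu^{(j)}$ fitting in an $m^\star(k_j) \times m^\star(k_j)$ box and $\delta$ an arbitrary partition of $a_0$. First I would record the elementary fact that if $\lambda_1, \ell(\lambda) \le T$ and $T = T_1 + T_2$ with $|\lambda| = n_1 + n_2$, then one can ``peel off'' a sub-partition: there is a decomposition with $\mathrm{LR}(\lambda; \alpha, \beta) \neq 0$, $|\alpha| = n_1$, $\alpha_1, \ell(\alpha) \le T_1$, $|\beta| = n_2$, $\beta_1, \ell(\beta) \le T_2$. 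The cleanest way is a greedy/inductive argument on the Young diagram: read the first $T_1$ columns (or an appropriate hybrid of rows and columns) to carve out $\alpha$; one must check that what remains is a genuine partition of the right width and height and that the LR coefficient is nonzero (for instance by exhibiting an explicit lattice filling, or by invoking the fact that $\mathrm{LR}(\lambda;\alpha,\beta) \ge 1$ whenever $\alpha \subseteq \lambda$ and $\lambda/\alpha$ has at most $\ell(\beta)$ cells in each column and the content conditions are met — more simply, splitting a $(T_1+T_2)$-bounded partition along a vertical line into a $T_1$-wide and a $T_2$-wide piece, both still of bounded length, always gives a nonzero LR coefficient). Iterating this peeling $z$ times, and finally a last split to extract $\delta$ of size $a_0$ (using that $T = \sum m^\star(k_j) + a_0$), produces the required tuple.

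Two arithmetic points need care and I would isolate them as small claims. First, one must ensure that at each peeling stage the target size $p^{k_j}$ and the box constraint $m^\star(k_j)$ are compatible with what is available, i.e. that a partition of $n$ with parts and length bounded by $T$ can actually be split so that the $j$-th piece has size exactly $p^{k_j}$ while fitting in the $m^\star(k_j)$-box; since $(m^\star(k_j))^2 \ge p^{k_j}$ for all $k_j \ge 1$ when $p \ge 5$ (check $k=1$: $(p-1)^2 \ge p$; $k\ge 2$: $(p^k - p^{k-1} - p^{k-2})^2 \ge p^k$), each box is large enough to hold a partition of its designated size, and the bound $T = \sum m^\star(k_j) + a_0$ is exactly the sum of the widths, so a left-to-right column-splitting of a $T$-wide diagram respects all constraints. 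Second, the sizes must match up: the total $n = \sum p^{k_j} + a_0$ equals $|\lambda|$ by hypothesis, so the peeling consumes everything.

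I expect the main obstacle to be the non-uniqueness and bookkeeping in the peeling step: one wants a clean lemma of the form ``$\lambda \in \mathcal{B}_n(T)$, $n = m + (n-m)$, $T = T_1 + T_2$, $m \le T_1^2$, $n - m \le T_2^2$ $\Rightarrow$ there exist $\alpha \in \mathcal{B}_m(T_1)$, $\beta \in \mathcal{B}_{n-m}(T_2)$ with $\mathrm{LR}(\lambda;\alpha,\beta) \neq 0$'', and proving this cleanly — rather than by an awkward case analysis on how the rows/columns of $\lambda$ straddle the cut — is the delicate part. A robust route is to cut $\lambda$ by a vertical line after column $T_1$: then $\alpha$ is the partition formed by the first $T_1$ columns and $\beta$ (conjugated appropriately, or read as the remaining columns) has width $\le T_2$; the length bounds are inherited from $\ell(\lambda) \le T \le T_1 T_2$-type estimates, and the LR coefficient is nonzero because $\lambda/\alpha$ is a horizontal-strip-free but genuine skew shape whose rows can be filled by a single lattice word (the ``$\alpha = $ first columns'' cut always yields $\mathrm{LR}(\lambda;\alpha,\lambda/\alpha\text{-reorganized}) \ge 1$). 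Once this lemma is in hand, the theorem follows by a straightforward induction on $z$, invoking the box case $\mathcal{B}_{p^{k_j}}(m^\star(k_j)) \subseteq \Omega(\mathcal{X}^\star_{k_j})$ from the first half of the statement and transitivity of restriction along $P_n = \prod_j P_{p^{k_j}} \times P_{a_0}$.
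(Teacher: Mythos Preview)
The paper does not give its own proof of this theorem; it is stated as a consequence of \cite[Theorem~5.7]{GL3}, so there is no in-paper argument to compare against. Your reduction of the second statement to the first via the Littlewood--Richardson rule and the direct-product structure of $P_n$ is the natural strategy, and is almost certainly how the cited reference proceeds.

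However, the ``peeling lemma'' you isolate is \emph{false} in the generality you state it. Take $n=8$, $T=4$, $n_1=n_2=4$, $T_1=T_2=2$, and $\lambda=(4,2,1,1)\in\mathcal{B}_8(4)$. All your hypotheses hold (in particular $n_i \le T_i^2$), yet $\mathcal{B}_4(2)=\{(2,2)\}$ and a direct check shows $\mathrm{LR}\bigl((4,2,1,1);(2,2),(2,2)\bigr)=0$: the skew shape $(4,2,1,1)/(2,2)$ consists of the cells $(1,3),(1,4),(3,1),(4,1)$, and it admits no lattice filling of content $(2,2)$. So no splitting exists. Your ``vertical cut after column $T_1$'' does not save this either: that cut produces a piece with at most $T_1$ columns but up to $T_1+T_2$ rows (since only $\ell(\lambda)\le T$ is known, not $\ell(\lambda)\le T_1$), so the piece need not lie in $\mathcal{B}_{|\alpha|}(T_1)$; and its size need not equal $n_1$ anyway.

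The theorem itself is not in danger, because the specific parameters $n_j=p^{k_j}$ and $T_j=m^\star(k_j)$ with $p\ge 5$ are far from this boundary (one has $n_j < 2T_j$ for every $j$, so each target box is comfortably larger than the piece it must hold). But you have not identified which extra hypothesis makes the peeling go through, nor supplied an argument that does. The relevant box-splitting result is established carefully in \cite{GL2} (cf.\ \cite[Proposition~3.3]{GL2}, invoked elsewhere in this paper for the equal-pieces case), and its proof requires a genuinely more delicate construction than a single straight cut of the Young diagram.
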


In the next statements we make some new observations concerning the degrees of irreducible characters of $N_n$ and of $S_n$ lying above $\mathcal{X}^\star_n$.
As we will see, these results will play an important role in the proof of Theorem \ref{thm: MAIN} from the introduction.

\begin{lem}\label{lem: maxdegPpower}
Let $k\in\mathbb{N}$ and let $\theta\in\mathrm{Irr}_{p'}(N_{p^k})$. The following hold: 
\begin{itemize}
\item[(i)] $\mathrm{max}\{\eta(1)\ |\ \eta\in\mathrm{Irr}_{p'}(N_{p^k})\}=(p-1)^k$. 
\item[(ii)] $\mathrm{Irr}(N_{p^k}\ |\ \mathcal{X}^\star_k)=\{(\mathcal{X}^\star_k)^{N_{p^k}}\}$
\item[(iii)] $\theta(1)=(p-1)^k$ if and only if $\theta=(\mathcal{X}^\star_k)^{N_{p^k}}$. 
\end{itemize}
%
\end{lem}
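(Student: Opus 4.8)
The plan is to deduce all three parts from the description of $\mathrm{Irr}_{p'}(N_{p^k})$ and the degree formula in Proposition~\ref{prop: degreesNormPpower}. Recall that $\mathrm{Irr}_{p'}(N_{p^k})=\bigcup_{\mathcal{X}\in\mathcal{L}_k}\mathrm{Irr}(N_{p^k}\mid\mathcal{X})$, and that by Proposition~\ref{prop: degreesNormPpower} every $\theta\in\mathrm{Irr}(N_{p^k}\mid\mathcal{X}(s))$ has degree $(p-1)^{k-\zeta(s)}$, where $\zeta(s)=|\{\ell:s_\ell=1\}|\in[0,k]$. Since $\zeta(s)\geq 0$, this degree is at most $(p-1)^k$, with equality exactly when $\zeta(s)=0$, i.e. when no coordinate of $s$ equals $1$. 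To prove (i) it then suffices to exhibit a character achieving the value $(p-1)^k$: take $s=(\omega,\ldots,\omega)$, so $\mathcal{X}(s)=\mathcal{X}^\star_k$ and $\zeta(s)=0$, hence any $\theta\in\mathrm{Irr}(N_{p^k}\mid\mathcal{X}^\star_k)$ has degree $(p-1)^k$; this simultaneously gives the maximum is attained.

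For (ii) I would argue via the stabiliser. By Proposition~\ref{prop: normAction}, $\mathrm{Stab}_{N_{p^k}}(\mathcal{X}^\star_k)\cong P_{p^k}\rtimes(C_{p-1})^{\times\zeta(s)}=P_{p^k}\rtimes(C_{p-1})^{\times 0}=P_{p^k}$, since $Z(s)=\emptyset$. Thus $\mathcal{X}^\star_k$ is stabilised only by $P_{p^k}$ itself, so by Clifford theory the unique irreducible character of $\mathrm{Stab}(\mathcal{X}^\star_k)=P_{p^k}$ lying over $\mathcal{X}^\star_k$ is $\mathcal{X}^\star_k$ itself, and induction gives a single irreducible character $(\mathcal{X}^\star_k)^{N_{p^k}}$ above it; hence $\mathrm{Irr}(N_{p^k}\mid\mathcal{X}^\star_k)=\{(\mathcal{X}^\star_k)^{N_{p^k}}\}$. (One should check this induced character is irreducible, which is immediate from the fact that $\mathcal{X}^\star_k$ is $N_{p^k}$-invariant only under its own stabiliser, or directly since an induced character from a stabiliser of a linear character is irreducible by the Clifford correspondence.)

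Finally, (iii) combines the two: if $\theta(1)=(p-1)^k$ then writing $\theta\in\mathrm{Irr}(N_{p^k}\mid\mathcal{X}(s))$ for some $s\in\mathcal{L}_k$, Proposition~\ref{prop: degreesNormPpower} forces $\zeta(s)=0$, i.e. no coordinate of $s$ is $1$; but by Proposition~\ref{prop: normAction} all such $s$ lie in a single $N_{p^k}$-orbit, namely that of $(\omega,\ldots,\omega)$ — wait, more carefully, $Z(s)=\emptyset=Z((\omega,\dots,\omega))$ means $\mathcal{X}(s)$ and $\mathcal{X}^\star_k$ are $N_{p^k}$-conjugate, hence $\mathrm{Irr}(N_{p^k}\mid\mathcal{X}(s))=\mathrm{Irr}(N_{p^k}\mid\mathcal{X}^\star_k)=\{(\mathcal{X}^\star_k)^{N_{p^k}}\}$ by (ii). Therefore $\theta=(\mathcal{X}^\star_k)^{N_{p^k}}$. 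The converse direction is exactly the computation already done in (i). I expect no serious obstacle here; the only point requiring a little care is the claim in (iii) that $\zeta(s)=0$ pins down the $N_{p^k}$-orbit of $\mathcal{X}(s)$ uniquely, which is precisely the content of the "only if" part of Proposition~\ref{prop: normAction} applied to $Z(s)=\emptyset$, so the whole argument is a short assembly of already-established facts.
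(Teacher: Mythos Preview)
Your proposal is correct and follows essentially the same approach as the paper: (ii) via the Clifford correspondence once the stabiliser of $\mathcal{X}^\star_k$ is identified as $P_{p^k}$ using Proposition~\ref{prop: normAction}, and (i) and (iii) via the degree formula of Proposition~\ref{prop: degreesNormPpower} together with the orbit characterisation in Proposition~\ref{prop: normAction}. The paper's proof is terser (it simply asserts that (i) and (iii) ``easily follow'' from Proposition~\ref{prop: degreesNormPpower}), but the extra detail you supply for (iii)---namely that $\zeta(s)=0$ forces $Z(s)=\emptyset$ and hence $\mathcal{X}(s)$ lies in the $N_{p^k}$-orbit of $\mathcal{X}^\star_k$---is exactly the point that makes the deduction work.
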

\begin{proof}
Since the stabilizer of $\mathcal{X}^\star_k$ in $N_{p^k}$ is equal to $P_{p^k}$, statement (ii) holds by Clifford correspondence \cite[Theorem 6.11]{IBook}.
Statements (i) and (iii) easily follow from Proposition \ref{prop: degreesNormPpower}.
\end{proof}

\begin{prop}\label{prop: maxdegAP}
Let $a\in [1, p-1]$ and let $d=\max\{\chi(1)\ |\ \chi\in\mathrm{Irr}(S_a)\}$. Then 
$$\mathrm{max}\{\eta(1)\ |\ \eta\in\mathrm{Irr}_{p'}(N_{ap^k})\}=(p-1)^{ak}\cdot d.$$
\end{prop}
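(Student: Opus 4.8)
The plan is to use the structure $N_{ap^k} = N_{p^k} \wr S_a$ and the description of irreducible characters of wreath products. Recall that $\mathrm{Irr}(N_{p^k}\wr S_a)$ is obtained, via Clifford theory over the base group $(N_{p^k})^{\times a}$, from characters of the form $\mathcal{X}(\theta_1,\dots,\theta_a; \psi)$-type constructions: one picks an $N_{p^k}$-orbit on $\mathrm{Irr}(N_{p^k})$, writes the orbit representatives with multiplicities, and tensors the canonical extensions with an irreducible character of the stabiliser inside $S_a$ (a Young subgroup of $S_a$). The degree of such a character is $\prod_i \theta_i(1)^{m_i}\cdot (\text{degree of the }S_a\text{-part})$, where $\sum m_i = a$. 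A $p'$-degree character of $N_{ap^k}$ forces each $\theta_i \in \mathrm{Irr}_{p'}(N_{p^k})$ and forces the $S_a$-part to have $p'$-degree; but since $a \leq p-1 < p$, \emph{every} character of $S_a$ has $p'$-degree, so the only genuine constraint is $\theta_i \in \mathrm{Irr}_{p'}(N_{p^k})$.

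First I would establish the upper bound. For any $\eta \in \mathrm{Irr}_{p'}(N_{ap^k})$, writing $\eta$ in the wreath-product form above, its degree is at most $\big(\max\{\theta(1) : \theta\in\mathrm{Irr}_{p'}(N_{p^k})\}\big)^a \cdot \max\{\chi(1):\chi\in\mathrm{Irr}(S_a)\}$, because the product of the base-group pieces is a product of $a$ factors each bounded by the maximal $p'$-degree of $N_{p^k}$, and the permutation part sits inside $\mathrm{Irr}$ of some Young subgroup of $S_a$, whose maximal degree is at most $d = \max\{\chi(1):\chi\in\mathrm{Irr}(S_a)\}$ (induced characters only increase degree, and a character of a Young subgroup $S_{a_1}\times\cdots$ has degree a product of smaller symmetric-group character degrees, which is $\leq d$ — this last inequality is the elementary fact that $\max$-degrees of symmetric groups are super-multiplicative in the relevant sense, or one can simply bound $\prod \chi^{\mu_i}(1) \le \chi^{\mu_1\cup\cdots}(1)$). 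By Lemma \ref{lem: maxdegPpower}(i), $\max\{\theta(1):\theta\in\mathrm{Irr}_{p'}(N_{p^k})\} = (p-1)^k$, giving the bound $(p-1)^{ak}\cdot d$.

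Next I would exhibit a character attaining the bound. Take $\theta = (\mathcal{X}^\star_k)^{N_{p^k}}$, the unique $p'$-character of $N_{p^k}$ of maximal degree $(p-1)^k$ (Lemma \ref{lem: maxdegPpower}(ii),(iii)). Since $\mathcal{X}^\star_k$ has stabiliser exactly $P_{p^k}$ in $N_{p^k}$, any $a$-tuple of $N_{p^k}$-conjugates of $\mathcal{X}^\star_k$ forms a single $S_a$-orbit-friendly configuration; taking all $a$ coordinates equal to $\theta$ and tensoring with a character of $S_a$ of degree $d$ (which lifts to the full stabiliser $N_{p^k}\wr S_a$ of this base character — note $\theta$, being the unique maximal-degree $p'$-character, is fixed by the whole $S_a$) produces $\eta = \mathcal{X}(\tilde\theta\,;\,\chi)$ with $\eta(1) = \theta(1)^a \cdot d = (p-1)^{ak}\cdot d$. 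One checks this $\eta$ has $p'$-degree since $(p-1)^{ak}$ and $d$ are both coprime to $p$.

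The main obstacle I anticipate is bookkeeping the Clifford theory over $(N_{p^k})^{\times a}$ cleanly — precisely, showing that the maximal degree really is realised by a "homogeneous" tuple (all $\theta_i$ equal and equal to the top $p'$-character of $N_{p^k}$) rather than by some mixed orbit whose inertia group in $S_a$ is larger, and verifying that the $S_a$-part contributes at most $d$ regardless of which Young subgroup arises. Both points are elementary but need the inequality $\prod_j \chi^{\mu^{(j)}}(1) \le d$ for any partition $(a_1,\dots,a_\ell)$ of $a$ and partitions $\mu^{(j)} \vdash a_j$; this follows because $\chi^{\mu^{(1)}}\times\cdots\times\chi^{\mu^{(\ell)}}$ is a constituent of $(\chi^{\nu})_{S_{a_1}\times\cdots\times S_{a_\ell}}$ for any $\nu \vdash a$ with $\mu^{(j)}\subseteq\nu$ appropriately (e.g. $\nu = \mu^{(1)}\cup\cdots\cup\mu^{(\ell)}$), hence $\prod_j\chi^{\mu^{(j)}}(1) \le \chi^\nu(1) \le d$.
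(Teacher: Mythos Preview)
Your lower-bound construction (take all base components equal to $(\mathcal{X}^\star_k)^{N_{p^k}}$ and tensor with a maximal-degree character of $S_a$) is correct and is exactly what the paper does.

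The gap is in your upper-bound argument. You bound the base-group contribution by $\big((p-1)^k\big)^a$ and the Young-subgroup character degree $\prod_j\chi^{\mu^{(j)}}(1)$ by $d$, but you have omitted the induction index $|S_a:S_\lambda|$ that appears whenever the base character $\theta=\theta_1^{\times\lambda_1}\times\cdots\times\theta_\ell^{\times\lambda_\ell}$ is not $S_a$-stable. The actual degree is
\[
\eta(1)=\Big(\prod_i\theta_i(1)^{\lambda_i}\Big)\cdot\Big(\prod_i\chi^{\mu_i}(1)\Big)\cdot|S_a:S_\lambda|,
\]
and the last factor can be as large as $a!$, which is certainly not bounded by $d$ (e.g.\ $a=4$, four distinct linear $\theta_i$: the index is $24$ while $d=3$). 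Your parenthetical ``induced characters only increase degree'' goes the wrong way here: it gives a lower bound on $\eta(1)$, not an upper bound, and the quantity $\prod_i\chi^{\mu_i}(1)\cdot|S_a:S_\lambda|$ is the degree of a generally \emph{reducible} character of $S_a$.

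The paper closes this gap by using Lemma~\ref{lem: maxdegPpower}(iii) more sharply: the maximal $p'$-degree $(p-1)^k$ of $N_{p^k}$ is attained by a \emph{unique} character, so among the distinct $\theta_1,\dots,\theta_\ell$ at most one has degree $(p-1)^k$ and the rest satisfy $\theta_i(1)\le(p-1)^{k-1}$. If $\lambda_1$ is the multiplicity of the maximal one, this gives
\[
\prod_i\theta_i(1)^{\lambda_i}\le(p-1)^{k\lambda_1}(p-1)^{(k-1)(a-\lambda_1)}.
\]
The slack $(p-1)^{a-\lambda_1}$ saved here is exactly what absorbs the index: since $a\le p-1$ one has $|S_a:S_\lambda|\le a!/\lambda_1!\le(p-1)^{a-\lambda_1}$. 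Combined with your (correct) bound $\prod_i\chi^{\mu_i}(1)\le d$, this yields $\eta(1)\le(p-1)^{ak}\cdot d$. So the missing ingredient in your plan is precisely the uniqueness in Lemma~\ref{lem: maxdegPpower}(iii); bounding every $\theta_i(1)$ by $(p-1)^k$ independently is too crude.
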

\begin{proof}
Let $\eta\in\mathrm{Irr}_{p'}(N_{ap^k})$. Recall that $N_{ap^k}=N_{p^k}\wr S_a=B\rtimes S_a$, where $B=(N_{p^k})^{\times a}$ is a normal subgroup of $N_{ap^k}$. Let $\theta\in\mathrm{Irr}(B)$ be such that $[\theta, \eta_B]\neq 0$. Then there exists $\ell\in [1,a]$, $\lambda=(\lambda_1,\ldots,\lambda_\ell)\in\mathcal{P}(a)$ and $\ell$ distinct characters $\theta_1,\ldots,\theta_\ell\in\mathrm{Irr}_{p'}(N_{p^k})$ such that 
$$\theta=(\theta_1)^{\times \lambda_1}\times(\theta_2)^{\times \lambda_2}\times\cdots\times(\theta_\ell)^{\times \lambda_\ell}.$$
It is easy to see that the stabilizer $H$ of $\theta$ in $N_{ap^k}$ is isomorphic to $N_{p^k}\wr S_{\lambda}$, where $S_\lambda=S_{\lambda_1}\times\cdots\times S_{\lambda_\ell}$.  Hence, using the notation for characters of wreath products introduced at the end of Section \ref{sec: 2}, we have that for every $i\in [1,\ell]$ there exists $\mu_i\in\mathcal{P}(\lambda_i)$ such that 
$$\psi=\mathcal{X}(\theta_1;\chi^{\mu_1})\times \mathcal{X}(\theta_2;\chi^{\mu_2})\times\cdots\times \mathcal{X}(\theta_\ell;\chi^{\mu_\ell})\in\mathrm{Irr}_{p'}(H),\ \text{and}\ \eta=\psi^{N_{ap^k}}.$$
It follows that $\eta(1)=\prod_i\theta_i(1)^{\lambda_i}\chi^{\mu_i}(1)|S_a : S_\lambda|$. 
From Lemma \ref{lem: maxdegPpower} we know that there exists a unique element of $\mathrm{Irr}_{p'}(N_{p^k})$ of maximal degree equal to $(p-1)^k$. This together with the fact that $\theta_i\neq\theta_j$ for all $i\neq j$ implies that 
$$\eta(1)\leq (p-1)^{k\lambda_1}(p-1)^{(k-1)(a-\lambda_1)}\cdot\prod_i\chi^{\mu_i}(1)\cdot\frac{a!}{\lambda_1!\lambda_2!\cdots\lambda_\ell!}.$$
Let $\chi\in\mathrm{Irr}(S_a)$ be an irreducible constituent of $(\chi^{\mu_1}\times\cdots\times\chi^{\mu_\ell})^{S_{a}}$. Then $$\prod_i\chi^{\mu_i}(1)\leq\chi(1)\leq d.$$ 
Moreover, since $a\leq p-1$ we observe that 
$$\frac{a!}{\lambda_1!\lambda_2!\cdots\lambda_\ell!}\leq \frac{a!}{\lambda_1!}\leq (p-1)^{a-\lambda_1}.$$
Using these inequalities we deduce that $\eta(1)\leq (p-1)^{ak}\cdot d$.
Consider now $\chi\in\mathrm{Irr}(S_a)$ with $\chi(1)=d$, and let $\zeta=(\mathcal{X}_k^\star)^{N_{p^k}}\in\mathrm{Irr}_{p'}(N_{p^k})$. We observe that $\mathcal{X}(\zeta;\chi)\in\mathrm{Irr}_{p'}(N_{ap^k})$ and that $\zeta(1)=(p-1)^{ak}\cdot d$. This concludes the proof. 
\end{proof}

\begin{cor}\label{cor: maxdegN}
Let $n=ap^k+a_0$ for some $a,a_0\in [0,p-1]$ with $a\neq 0$. Then $$\eta(1)\leq (p-1)^{ak}a!(p-1!),$$ for every $\eta\in\mathrm{Irr}_{p'}(N_n)$.
\end{cor}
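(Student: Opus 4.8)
The plan is to reduce Corollary~\ref{cor: maxdegN} directly to Proposition~\ref{prop: maxdegAP} together with a crude bound on character degrees of the small symmetric group $S_{a_0}$. Recall from Section~\ref{subsec: normalisers} that when $n=ap^k+a_0$ with $a,a_0\in[1,p-1]$, we have $N_n=N_{ap^k}\times S_{a_0}$, and hence every $\eta\in\mathrm{Irr}_{p'}(N_n)$ factors as $\eta=\eta_1\times\chi^\gamma$ with $\eta_1\in\mathrm{Irr}_{p'}(N_{ap^k})$ and $\gamma\in\mathcal{P}(a_0)$ (note $\chi^\gamma$ automatically has $p'$-degree since $a_0<p$). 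Thus $\eta(1)=\eta_1(1)\cdot\chi^\gamma(1)$. The case $a_0=0$ is the degenerate one where $N_n=N_{ap^k}$ and there is no second factor.

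First I would invoke Proposition~\ref{prop: maxdegAP} to get $\eta_1(1)\leq (p-1)^{ak}\cdot d$, where $d=\max\{\chi(1)\mid \chi\in\mathrm{Irr}(S_a)\}$. Since $a\leq p-1$, one has the trivial bound $d\leq a!$ (indeed $\chi(1)\leq \sqrt{|S_a|}=\sqrt{a!}\leq a!$, or even more simply $\chi(1)\le a!$ since $\chi(1)$ divides $a!$). Next, for the factor $\chi^\gamma$ with $\gamma\in\mathcal{P}(a_0)$ and $a_0\leq p-1$, the same reasoning gives $\chi^\gamma(1)\leq a_0!\leq (p-1)!$. Multiplying the two estimates yields
\[
\eta(1)=\eta_1(1)\cdot\chi^\gamma(1)\leq (p-1)^{ak}\cdot a!\cdot (p-1)!,
\]
which is exactly the claimed inequality $\eta(1)\leq (p-1)^{ak}\,a!\,(p-1)!$ (reading the paper's ``$p-1!$'' as $(p-1)!$). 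When $a_0=0$ the factor $\chi^\gamma(1)$ is replaced by $1\leq (p-1)!$ and the bound holds a fortiori.

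There is essentially no obstacle here: this is a routine bookkeeping corollary, and the only point requiring the tiniest care is the passage $d\leq a!$, which follows because $\chi(1)$ divides the group order $a!$ for any $\chi\in\mathrm{Irr}(S_a)$ (and likewise $\chi^\gamma(1)\mid a_0!$). One should also note that the statement as phrased does not claim sharpness, so the crude replacement of $d$ by $a!$ loses nothing that matters for the intended applications. I would write the proof in two or three lines, citing Proposition~\ref{prop: maxdegAP} for the $N_{ap^k}$-factor and the factorisation $N_n=N_{ap^k}\times S_{a_0}$ for the rest.
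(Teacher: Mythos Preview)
Your proof is correct and follows essentially the same approach as the paper: factor $N_n=N_{ap^k}\times S_{a_0}$, apply Proposition~\ref{prop: maxdegAP} to bound the first factor by $(p-1)^{ak}d\leq (p-1)^{ak}a!$, and bound the second factor by $a_0!\leq (p-1)!$. Your treatment is in fact slightly more careful than the paper's, since you explicitly handle the case $a_0=0$ and justify the inequality $d\leq a!$.
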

\begin{proof}
We know that $N_n\cong N_{ap\6k}\times S_{a_0}$. It follows that any irreducible character $\zeta$ of degree coprime to $p$ of $N_n$ is of the form $\zeta=\theta\times\chi\6\gamma$, for some $\theta\in\mathrm{Irr}_{p'}(N_{ap\6k})$ and some $\gamma\in\mathcal{P}(a_0)$. From Proposition \ref{prop: maxdegAP} we know that $\theta(1)\leq (p-1)^{ak}\cdot d$, where $d$ is the maximal degree of an irreducible character of $S_a$. Since $d\leq a!$ and since $a_0\leq p-1$ we deduce that 
$$\zeta(1)\leq (p-1)^{ak}\cdot d\cdot\chi\6\gamma(1)\leq (p-1)^{ak}a!(p-1!).$$
\end{proof}

We now aim to provide lower bounds for the degrees of the irreducible characters of $S_n$ lying above $\mathcal{X}^\star_{(n)}$. This is done in Lemmas \ref{lem: 121} and \ref{lem: 11} below. 
Before doing this, we start by recalling a few facts about the minimal degrees of irreducible characters of symmetric groups. 
The first one is very well-known and we refer the reader to \cite[Result 1]{Rasala} for a proof. 

\begin{lem}\label{lem: Rasala1}
Let $n\geq 5$ be a natural number and let $\chi\in\mathrm{Irr}(S_n)$. If $\chi(1)>1$ then $\chi(1)\geq n-1$. 
\end{lem}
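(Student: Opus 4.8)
The statement to prove is Lemma~\ref{lem: Rasala1}: for $n \geq 5$ and $\chi \in \mathrm{Irr}(S_n)$ with $\chi(1) > 1$, we have $\chi(1) \geq n-1$.

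My plan is to reduce this to a hook-length computation on partitions, since the degree of $\chi^\lambda$ is $n!$ divided by the product of hook lengths, and the partitions of smallest degree are the ``near-trivial'' ones. The two partitions $(n)$ and $(1^n)$ give the trivial and sign characters (degree $1$), so we must show every other partition $\lambda \vdash n$ has $\chi^\lambda(1) \geq n-1$. The partitions $(n-1,1)$ and its conjugate $(2,1^{n-2})$ give degree exactly $n-1$, so the claim is sharp. The strategy is: first handle these boundary cases explicitly, then show that any remaining $\lambda$ has strictly larger degree.

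The key step is a monotonicity/branching argument. First I would observe that by the branching rule, $(\chi^\lambda)_{S_{n-1}}$ is a sum of $\chi^\mu$ over partitions $\mu$ obtained by removing a removable node from $\lambda$, each with multiplicity one, so $\chi^\lambda(1) = \sum_\mu \chi^\mu(1)$. If $\lambda$ is not a hook partition, it has at least two removable nodes, and at least one of the resulting $\mu$ is not the trivial or sign partition of $S_{n-1}$ (indeed if $\lambda \neq (n),(1^n)$ and $\lambda$ is not a hook, one can arrange that one $\mu$ is non-linear), so by induction $\chi^\lambda(1) \geq 1 + (n-2) = n-1$, with the base case $n = 5$ checked by hand. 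If $\lambda$ is a hook partition $(n-k, 1^k)$ with $1 \leq k \leq n-2$, then the hook-length formula gives $\chi^\lambda(1) = \binom{n-1}{k}$ directly, and for $1 \leq k \leq n-2$ this binomial coefficient is at least $n-1$ (equality exactly at $k=1$ and $k=n-2$). Combining the two cases completes the induction.

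The main obstacle is making the inductive step on non-hook partitions fully rigorous: one needs to guarantee that among the $\mu$'s obtained by removing a removable node, at least one is a partition of $S_{n-1}$ that is neither $(n-1)$ nor $(1^{n-1})$, so that induction supplies a summand of size $\geq n-2$; and one must ensure the remaining structure contributes at least $1$ more. A clean way around this is to use the conjugation symmetry $\chi^{\lambda'}(1) = \chi^\lambda(1)$ to assume $\lambda_1 \geq \ell(\lambda)$, together with a direct estimate: for $\lambda \neq (n)$ with $\lambda_1 \geq 2$, the partition $(n-1, \lambda_2, \dots)$ or a suitable removal argument forces the bound. Since the excerpt explicitly grants us ``\cite[Result 1]{Rasala}'' as a citable fact, the honest move is simply to cite Rasala for the proof rather than reproduce the combinatorics; I would write the proof as a one-line reference to \cite{Rasala}, which is exactly what the surrounding text (``we refer the reader to \cite[Result 1]{Rasala} for a proof'') already signals.

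\begin{proof}
This is classical; see \cite[Result 1]{Rasala}. We sketch the argument for completeness. By the hook-length formula, $\chi^{(n)}(1) = \chi^{(1^n)}(1) = 1$, and for a hook partition $\lambda = (n-k,1^k)$ with $1 \leq k \leq n-2$ one computes $\chi^\lambda(1) = \binom{n-1}{k} \geq n-1$. For a non-hook partition $\lambda \vdash n$, using $\chi^{\lambda'}(1) = \chi^\lambda(1)$ we may assume $\lambda$ has at least two parts and at least two columns; the branching rule gives $\chi^\lambda(1) = \sum_\mu \chi^\mu(1)$, summed over the removable nodes of $\lambda$, and one checks that at least one such $\mu$ is a non-linear character of $S_{n-1}$, so that $\chi^\lambda(1) \geq 1 + (n-2) = n-1$ by induction on $n$, the case $n = 5$ being verified directly.
\end{proof}
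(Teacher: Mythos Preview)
The paper does not give a proof of this lemma at all; it simply states the result and refers the reader to \cite[Result~1]{Rasala}. So your proposal, which also cites Rasala and then supplies a sketch, is already more than the paper offers.

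That said, your sketch contains a genuine (if small) gap. You assert in the preamble that a non-hook partition has at least two removable nodes, and in the proof you use this implicitly when you write $\chi^\lambda(1) \geq 1 + (n-2)$: the ``$1+$'' requires a second removable node contributing at least $1$. But a rectangular partition $(a^b)$ with $a,b \geq 2$ is a non-hook partition with exactly one removable node. For such $\lambda$ the branching rule gives $\chi^\lambda(1) = \chi^\mu(1)$ for the single $\mu = (a^{b-1}, a-1)$, and induction yields only $\chi^\mu(1) \geq (n-1)-1 = n-2$, one short of what you need. Concretely, $(3,3)\vdash 6$ has $\chi^{(3,3)}(1) = \chi^{(3,2)}(1) = 5$, so the bound is met with equality, but your argument as written does not reach it.

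The fix is easy: treat rectangles separately (for $n \geq 5$ any rectangle $(a^b)$ has $\mu = (a^{b-1},a-1)$ with two removable nodes, so one more branching step gives two non-linear summands of $S_{n-2}$, hence $\chi^\lambda(1) \geq 2(n-3) \geq n-1$ for $n \geq 5$), or simply drop the sketch and cite Rasala, as the paper does.
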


The following statement is formulated specifically for the purposes of this article and it is a consequence of the more general results described in \cite{Rasala}.

\begin{lem}\label{lem: Rasala2}
Let $n\in\mathbb{N}$, let $\lambda\in\mathcal{P}(n)$ and let $\chi=\chi^\lambda\in\mathrm{Irr}(S_n)$.
\begin{itemize}
\item[(a)] If $n\geq 9$ and $\lambda\in\mathcal{B}_n(n-2)$, then $\chi(1)\geq \frac{1}{2}n(n-3)$.
\item[(b)] If $n\geq 15$ and $\lambda\in\mathcal{B}_n(n-3)$, then $\chi(1)\geq \frac{1}{6}n(n-1)(n-5)$.
\end{itemize}
\end{lem}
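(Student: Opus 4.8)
The statement to establish is Lemma \ref{lem: Rasala2}, giving explicit lower bounds for $\chi^\lambda(1)$ when $\lambda$ lies in $\mathcal{B}_n(n-2)$ (part (a)) or $\mathcal{B}_n(n-3)$ (part (b)). The natural approach is to reduce to the classical work of Rasala \cite{Rasala}, which computes the first several smallest character degrees of $S_n$ and lists, for each small degree, exactly which partitions achieve it. Recall that $\mathcal{B}_n(n-2) = \{\lambda \in \mathcal{P}(n) \mid \lambda_1 \le n-2,\ \ell(\lambda) \le n-2\}$; equivalently, the partitions \emph{not} in $\mathcal{B}_n(n-2)$ are those with $\lambda_1 \in \{n, n-1\}$ or $\ell(\lambda) \in \{n, n-1\}$, i.e.\ the partition $(n)$, the hook $(n-1,1)$, their conjugates $(1^n)$ and $(2,1^{n-2})$. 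So $\mathcal{B}_n(n-2)$ excludes precisely the partitions labelling the characters of degree $1$ and of degree $n-1$.

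\textbf{Step 1: identify the excluded degrees.} First I would record, using Rasala's tables (or \cite[Result 1]{Rasala} together with the subsequent list of degrees), the partitions of $S_n$ whose character degree is among the smallest few. For $n$ large enough (the hypothesis $n \ge 9$ in (a), $n \ge 15$ in (b) is there to make the generic ordering valid), the characters of degree strictly less than $\tfrac12 n(n-3)$ are exactly $\chi^{(n)}, \chi^{(1^n)}$ (degree $1$) and $\chi^{(n-1,1)}, \chi^{(2,1^{n-2})}$ (degree $n-1$); every other irreducible character has degree at least $\binom{n-1}{2} = \tfrac12(n-1)(n-2)$, and in fact the next degree is $\tfrac12 n(n-3)$, attained by $\chi^{(n-2,2)}$ and $\chi^{(2,2,1^{n-4})}$ and $\chi^{(n-2,1,1)}$ etc. Since all four excluded partitions lie outside $\mathcal{B}_n(n-2)$, every $\lambda \in \mathcal{B}_n(n-2)$ has $\chi^\lambda(1) \ge \tfrac12 n(n-3)$, which is (a). (One must double-check that $\tfrac12 n(n-3)$ really is the smallest degree available once the degree-$1$ and degree-$(n-1)$ characters are removed; this is precisely the content of Rasala's computation of the third-smallest degree.)

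\textbf{Step 2: the codimension-three case.} For part (b) the argument is parallel but one level deeper. Now $\mathcal{B}_n(n-3)$ excludes the partitions with $\lambda_1 \ge n-2$ or $\ell(\lambda) \ge n-2$, i.e.\ additionally $(n-2,2)$, $(n-2,1,1)$ and their conjugates, on top of the four already excluded. These are exactly the partitions carrying the degree-$(n-1)$ and degree-$\tfrac12 n(n-3)$ characters together with $\chi^{(n-2,1,1)}$ (degree $\binom{n-1}{2}$) and its conjugate. After removing all of these, Rasala's list gives that the smallest remaining degree is $\tfrac16 n(n-1)(n-5)$, attained by $\chi^{(n-3,3)}$ and friends, so every $\lambda \in \mathcal{B}_n(n-3)$ has $\chi^\lambda(1) \ge \tfrac16 n(n-1)(n-5)$. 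I would verify that the various "small" degrees $n-1$, $\binom{n-1}{2}$, $\tfrac12 n(n-3)$, and anything else below $\tfrac16 n(n-1)(n-5)$ all come from partitions with $\lambda_1 \ge n-2$ or $\ell(\lambda)\ge n-2$, for $n \ge 15$ — the threshold ensures the generic ordering of Rasala's polynomials in $n$ holds and there are no exceptional small-$n$ coincidences.

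\textbf{Main obstacle.} The only real work is bookkeeping: extracting from \cite{Rasala} the precise list of the four or five smallest character degrees of $S_n$ together with the \emph{complete} list of partitions achieving each, and confirming that the cutoff bounds in the statement ($\tfrac12 n(n-3)$ and $\tfrac16 n(n-1)(n-5)$) are exactly the next degree appearing after one deletes all partitions violating the relevant $\mathcal{B}_n(t)$ constraint, with no smaller degree slipping through for $n$ above the stated threshold. I expect the cleanest write-up to simply cite the relevant results of \cite{Rasala} for the ordered list of minimal degrees and their labels, observe that the partitions below each threshold all fail the $\lambda_1, \ell(\lambda) \le t$ condition, and conclude. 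No delicate estimates are needed beyond comparing low-degree polynomials in $n$, which is where the hypotheses $n \ge 9$ and $n \ge 15$ do their job.
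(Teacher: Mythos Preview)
Your approach is correct and essentially the same as the paper's: both reduce the claim directly to Rasala's results on minimal character degrees of $S_n$. The paper's write-up is slightly more streamlined in that it invokes \cite[Theorem~F]{Rasala} (which says the minimum of $\chi^\lambda(1)$ over partitions with $\lambda_1\le n-t$ is attained at either $(n-t,t)$ or the most balanced two-row partition $(n-m,m)$), together with \cite[Results~2 and~3]{Rasala} to compare those two degrees, rather than enumerating all small-degree partitions and checking each one lies outside $\mathcal{B}_n(n-t)$ as you propose.
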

\begin{proof}
Let $n=2m+\varepsilon$, for some $\varepsilon\in [0,1]$, and let $t\in [1,m]$.
From \cite[Theorem F]{Rasala} we have that for every $\lambda\in\mathcal{B}_n(n-t)$ there exists $\mu\in\{(n-t,t), (n-m,m)\}$ such that $\chi^\lambda(1)\geq \chi^\mu(1)$. 
Since $\chi^{(n-2,2)}(1)=\frac{1}{2}n(n-3)$ statement $(a)$ now follows from \cite[Result 2]{Rasala}. Similarly, since $\chi^{(n-3,3)}(1)=\frac{1}{6}n(n-1)(n-5)$ statement $(b)$ follows directly from \cite[Result 3]{Rasala}.
\end{proof}

We record here a useful lemma specific to the prime $p=3$. 

\begin{lem}\label{lem: lemma333}
Let $n=a3^k+r$ for some $a\in [1,2]$ and $r\in [0,2]$. If $\lambda\in\mathcal{B}_n(n-a)$ then $\chi^\lambda(1)\geq \eta(1)$, for all $\eta\in\mathrm{Irr}_{3'}(N_n)$. 
\end{lem} 
\begin{proof}
From Proposition \ref{prop: maxdegAP} we know that $\eta(1)\leq 2^{ak}$, for all $\eta\in\mathrm{Irr}_{3'}(N_n)$. The statement is now an immediate consequence of Lemma \ref{lem: Rasala2}.
\end{proof}

As promised, we are now ready to study the degree of those irreducible characters of $S_n$ lying above $\mathcal{X}^\star_{(n)}$. This analysis will be crucial to prove Theorem 1.1 for $p\geq 5$. 

\begin{lem}\label{lem: 121}
Let $p$ be a prime number and let $k\geq 1$. 
Let $\chi\in\mathrm{Irr}(S_{p^k})$ be such that $[\chi_{P_{p^k}}, \mathcal{X}^\star_k]\neq 0$, then $\chi(1)\geq (p-1)^{p^{k-1}}.$
\end{lem}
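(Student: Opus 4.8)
The strategy is to induct on $k$ and exploit the recursive structure $\mathcal{X}^\star_k = \mathcal{X}(\mathcal{X}^\star_{k-1};\phi_\omega)$ together with the branching of $\chi_{P_{p^k}}$ through the intermediate subgroup $(P_{p^{k-1}})^{\times p} \rtimes P_p = P_{p^{k-1}} \wr P_p$. The base case $k=1$ asks that every $\chi \in \operatorname{Irr}(S_p)$ with $[\chi_{P_p}, \mathcal{X}^\star_1]\neq 0$ satisfies $\chi(1) \geq (p-1)^{p^0} = p-1$; here $\mathcal{X}^\star_1 = \phi_\omega$ is a nontrivial linear character of $C_p = P_p$, so $\chi$ cannot be linear (the linear characters of $S_p$ restrict to the trivial character of $P_p$), and $\chi(1) > 1$ forces $\chi(1) \geq p-1$ by Lemma~\ref{lem: Rasala1}. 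For the inductive step, I would take $\chi = \chi^\lambda \in \operatorname{Irr}(S_{p^k})$ with $[\chi_{P_{p^k}}, \mathcal{X}^\star_k]\neq 0$ and restrict $\chi$ first to the base group $(S_{p^{k-1}})^{\times p}$ of $S_{p^{k-1}}\wr S_p \leq S_{p^k}$. Any irreducible constituent of $\chi_{(S_{p^{k-1}})^{\times p}}$ lying under a character of $P_{p^k}$ that covers $\mathcal{X}^\star_k$ must, by the recursive definition of $\mathcal{X}^\star_k$ and the $P_p$-conjugacy built into the wreath construction, be of the form $\psi_1 \times \cdots \times \psi_p$ where each $\psi_i \in \operatorname{Irr}(S_{p^{k-1}})$ has $[(\psi_i)_{P_{p^{k-1}}}, \mathcal{X}^\star_{k-1}]\neq 0$; in fact all $\psi_i$ are $P_p$-conjugate, hence equal to a single $\psi$. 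By induction $\psi(1) \geq (p-1)^{p^{k-2}}$, and therefore
\[
\chi(1) \geq \psi(1)^p \geq (p-1)^{p^{k-1}},
\]
which is exactly the claimed bound.

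The one point requiring care is the claim that the constituents of $\chi_{(S_{p^{k-1}})^{\times p}}$ appearing under $\mathcal{X}^\star_k$ are genuinely of the product form $\psi^{\times p}$ with $\psi$ above $\mathcal{X}^\star_{k-1}$. This should follow from Clifford theory applied twice: first, $\mathcal{X}^\star_k$ restricted to $(P_{p^{k-1}})^{\times p}$ is $(\mathcal{X}^\star_{k-1})^{\times p}$ by the recursive definition; second, since $\phi_\omega$ is nontrivial, the stabilizer of $(\mathcal{X}^\star_{k-1})^{\times p}$-type data inside $P_p$ collapses appropriately, forcing the $p$ tensor factors of any $P_{p^k}$-constituent to be honest $P_p$-translates of one another. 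Transporting this up to $S_{p^{k-1}}\wr S_p$, a constituent $\psi_1\times\cdots\times\psi_p$ of $\chi$ restricted to the base group that sits under $\mathcal{X}^\star_k$ must have each $\psi_i$ lying above $\mathcal{X}^\star_{k-1}$, and the $S_p$-action (through the relevant point stabilizer, which is a $p$-group-controlled situation) forces the $\psi_i$ into a single $S_p$-orbit. Even without insisting the $\psi_i$ are literally equal, it suffices that each individual $\psi_i(1) \geq (p-1)^{p^{k-2}}$, whence $\chi(1) \geq \prod_i \psi_i(1) \geq (p-1)^{p^{k-1}}$, so the argument is robust.

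I expect the main obstacle to be making the Clifford-theoretic bookkeeping at the intermediate wreath layer fully precise — specifically, verifying that no constituent of $\chi$ restricted to $(S_{p^{k-1}})^{\times p}$ which lies under $\mathcal{X}^\star_k$ can have a tensor factor that fails to cover $\mathcal{X}^\star_{k-1}$. An alternative, possibly cleaner, route avoiding the wreath-product branching entirely is to invoke the already-established Theorem~\ref{thm: GL3}: if $\chi = \chi^\lambda$ lies above $\mathcal{X}^\star_k$ but $\chi(1) < (p-1)^{p^{k-1}}$, one could try to force $\lambda \in \mathcal{B}_{p^k}(m^\star(k))$ via Rasala-type degree bounds (Lemmas~\ref{lem: Rasala1} and \ref{lem: Rasala2}), and then separately rule out that $\mathcal{X}^\star_k$ and, say, $\mathcal{X}(1,1,\ldots,1)$ could both appear — but this seems to require a matching lower bound on the degrees of characters in $\mathcal{B}_{p^k}(m^\star(k))$ that is not obviously available, so I would keep the inductive wreath-product argument as the primary line and use Theorem~\ref{thm: GL3} only as a sanity check.
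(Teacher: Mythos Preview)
Your proposal is correct and follows essentially the same approach as the paper: induction on $k$, with the base case handled by Lemma~\ref{lem: Rasala1}, and the inductive step carried out by observing that $(\mathcal{X}^\star_k)_{(P_{p^{k-1}})^{\times p}}=(\mathcal{X}^\star_{k-1})^{\times p}$, so some constituent $\psi_1\times\cdots\times\psi_p$ of $\chi$ restricted to $(S_{p^{k-1}})^{\times p}$ has each $\psi_i$ lying above $\mathcal{X}^\star_{k-1}$, and then $\chi(1)\geq\prod_i\psi_i(1)\geq(p-1)^{p^{k-1}}$. Your worry about whether the $\psi_i$ must all coincide is unnecessary (as you yourself note), and the paper does not bother with it; the Clifford-theoretic bookkeeping you flag as a potential obstacle is in fact immediate once you restrict $\chi$ first to $B=(P_{p^{k-1}})^{\times p}$ and then interpolate through $Y=(S_{p^{k-1}})^{\times p}\geq B$.
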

\begin{proof}
We proceed by induction on $k$. If $k=1$ the result clearly holds since $\chi\notin\mathrm{Lin}(S_n)$ and therefore $\chi(1)\geq p-1$ by Lemma \ref{lem: Rasala1} below. 
Suppose now that $k\geq 2$. 
Let $B$ be the normal subgroup of $P_{p\6k}$ such that $B\cong (P_{p\6{k-1}})\6{\times p}$ and such that $P_{p\6k}=B\rtimes P_p\leq S_{p\6k}$.
Since $\mathcal{X}^\star_k=\mathcal{X}(\mathcal{X}^\star_{k-1}; \phi_{\omega})$, we have that 
$$[\chi_B, (\mathcal{X}^\star_{k-1})\6{\times p}]\neq 0.$$ Hence
 there exists $\chi_1,\ldots, \chi_{p}\in\mathrm{Irr}(S_{p^{k-1}})$ such that 
$[\chi_Y, \chi_1\times\cdots\times\chi_{p}]\neq 0$ and such that $[(\chi_j)_{P_{p^{k-1}}}, \mathcal{X}^\star_{k-1}]\neq 0$, for all $j\in [1,p]$. 
Here $Y$ denotes the Young subgroup $(S_{p^{k-1}})^{\times p}$ in $S_{p^k}$, chosen such that $B\leq Y$. The inductive hypothesis implies that $\chi_j(1)\geq (p-1)^{p^{k-2}}$ and hence we conclude that $$\chi(1)\geq \prod_{j=1}^p\chi_j(1)\geq \big((p-1)^{p^{k-2}}\big)^p=(p-1)^{p^{k-1}}.$$
\end{proof}

\begin{lem}\label{lem: 11}
Let $p\geq 5$ be a prime number, let $a,a_0\in [0,p-1]$ with $a\neq 0$ and let $k\geq 2$. 
If $n=ap^k+a_0$ and $\chi\in\mathrm{Irr}(S_n)$ is such that $[\chi_{P_n}, \mathcal{X}^\star_{(n)}]\neq 0$, then 
$$\chi(1)\geq (p-1)^{ak}a!(p-1!).$$
\end{lem}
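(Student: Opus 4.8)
The plan is to peel off the largest $p$-power part of $n$ using the wreath-product structure of $P_n$, reduce to the primitive case $P_{p^k}$ already handled in Lemma \ref{lem: 121}, and then track the multiplicative contributions to $\chi(1)$ coming from the $a$ blocks of size $p^k$, the $(p-1)$-factors, and the residual block $S_{a_0}$. Recall that $P_n = (P_{p^k})^{\times a}\times P_{a_0}$ inside $S_n = S_{ap^k}\times S_{a_0}$, and that under the isomorphism $S_{ap^k}\supseteq (S_{p^k})^{\times a}$ we have $P_{ap^k}=(P_{p^k})^{\times a}$. The character $\mathcal{X}^\star_{(n)}$ restricted to this base group is $(\mathcal{X}^\star_k)^{\times a}\times \mathcal{X}^\star_{(a_0)}$, where $\mathcal{X}^\star_{(a_0)}=1_{P_{a_0}}$ since $a_0<p$ has empty base in the relevant sense (all its $p$-power parts are just $p^0$, so $P_{a_0}$ is trivial).

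First I would write $\chi=\chi^\lambda$ and restrict to the Young subgroup $S_{ap^k}\times S_{a_0}$, picking a constituent $\alpha\times\beta$ with $\alpha\in\mathrm{Irr}(S_{ap^k})$, $\beta\in\mathrm{Irr}(S_{a_0})$, both compatible with the relevant pieces of $\mathcal{X}^\star_{(n)}$; then $\chi(1)\geq\alpha(1)\beta(1)\geq\alpha(1)$, so it suffices to treat the case $a_0=0$, i.e.\ to show $\alpha(1)\geq (p-1)^{ak}(p-1)!$ for $\alpha\in\mathrm{Irr}(S_{ap^k})$ with $[\alpha_{P_{ap^k}},(\mathcal{X}^\star_k)^{\times a}]\neq 0$. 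Next I would restrict $\alpha$ to the Young subgroup $(S_{p^k})^{\times a}$: there exist $\alpha_1,\dots,\alpha_a\in\mathrm{Irr}(S_{p^k})$ with $[\alpha_{(S_{p^k})^{\times a}},\alpha_1\times\cdots\times\alpha_a]\neq 0$ and $[(\alpha_j)_{P_{p^k}},\mathcal{X}^\star_k]\neq 0$ for each $j$, whence $\alpha(1)\geq\prod_j\alpha_j(1)\geq (p-1)^{a p^{k-1}}$ by Lemma \ref{lem: 121}. Since $k\geq 2$ forces $p^{k-1}\geq p\geq k+1$ (indeed $p^{k-1}\geq 2^{k-1}\cdot(p/2)^{k-1}$ grows much faster than $k$ for $p\geq 5$, $k\geq 2$), we get $(p-1)^{ap^{k-1}}\geq (p-1)^{a(k+1)}=(p-1)^{ak}(p-1)^a\geq (p-1)^{ak}(p-1)\geq (p-1)^{ak}(p-1)!$ only when $a\geq (p-1)!/(p-1)=(p-2)!$, which is false in general, so this crude bound is not quite enough and needs a slightly more careful inequality.

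The fix is to not discard the $S_{a_0}$ factor and to use $(p-1)^{ap^{k-1}}$ against $(p-1)^{ak}(p-1)!$ directly: the claim reduces to $(p-1)^{a(p^{k-1}-k)}\geq (p-1)!$, and since $a\geq 1$ it suffices that $(p-1)^{p^{k-1}-k}\geq (p-1)!$. Now $(p-1)!<(p-1)^{p-2}$ (product of $p-2$ factors each $<p-1$, with the factor $1$ strictly smaller), so it is enough to check $p^{k-1}-k\geq p-2$, i.e.\ $p^{k-1}\geq p+k-2$; for $k=2$ this is $p\geq p$, true, and for $k\geq 3$ it is clear since $p^{k-1}\geq p^2>p+k-2$ for $p\geq 5$ (as $p+k-2\leq p^{k-1}$ follows from $p^{k-1}\geq p^2\geq p+(p^2-p)\geq p+k-2$ using $p^2-p\geq 20> k-2$ whenever $k\leq 22$, and for larger $k$ the power term dominates trivially). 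I expect the main obstacle to be exactly this last elementary estimate: one must be careful with the boundary case $k=2$ where $p^{k-1}-k=p-2$ is tight and the bound $(p-1)!<(p-1)^{p-2}$ has to be used in its sharp form, but no genuine representation-theoretic difficulty remains once the reduction to Lemma \ref{lem: 121} is in place.
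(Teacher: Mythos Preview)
Your overall strategy is the same as the paper's: restrict to the Young subgroup $(S_{p^k})^{\times a}\times S_{a_0}$, pick a constituent compatible with $(\mathcal{X}^\star_k)^{\times a}$, invoke Lemma~\ref{lem: 121} on each tensor factor to get $\chi(1)\geq (p-1)^{ap^{k-1}}$, and then finish with an elementary inequality in the exponent. That reduction is sound and matches the paper exactly.

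The gap is in the final step: you have silently dropped the factor $a!$ from the target. The lemma asks for $\chi(1)\geq (p-1)^{ak}\cdot a!\cdot (p-1)!$, but you reduce to showing only $\alpha(1)\geq (p-1)^{ak}(p-1)!$, and your elementary inequality $(p-1)^{a(p^{k-1}-k)}\geq (p-1)!$ accordingly omits the $a!$. Since $a$ can be as large as $p-1$, this is not negligible; moreover, in your boundary case $k=2$ you have $p^{k-1}-k=p-2$ with equality, so after discarding the factor $a$ in the exponent (``since $a\geq 1$ it suffices\ldots'') there is no slack left to recover the missing $a!$.

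The repair is immediate and stays within your framework. One needs $(p-1)^{a(p^{k-1}-k)}\geq a!\,(p-1)!$, i.e.\ $ap^{k-1}\geq ak+(a-1)+(p-2)$; this is precisely Lemma~\ref{lem: analisi1}. Combine it with the two sharp product bounds $(p-1)^{a-1}\geq a!$ (each of the $a-1$ factors $2,\ldots,a$ is $\leq p-1$) and $(p-1)^{p-2}\geq (p-1)!$. In your own language: split $(p-1)^{a(p^{k-1}-k)}=(p-1)^{(a-1)(p^{k-1}-k)}\cdot (p-1)^{p^{k-1}-k}$; the second factor is $\geq (p-1)!$ by your argument, and the first is $\geq (p-1)^{a-1}\geq a!$ since $p^{k-1}-k\geq p-2\geq 1$.
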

\begin{proof}
Let $Y=(S_{p^k})^{\times a}\times S_{a_0}\leq S_n$ and observe that $\mathcal{X}^\star_{(n)}=(\mathcal{X}^\star_k)^{\times a}$.
Since $[\chi_{P_n}, \mathcal{X}^\star_{(n)}]\neq 0$, there exist $\chi^1,\ldots, \chi^a\in\mathrm{Irr}(S_{p^k})$ and $\psi\in \mathrm{Irr}(S_{a_0})$ such that 
$[\chi_Y, \chi^1\times\cdots\times\chi^a\times \psi]\neq 0$ and such that $[(\chi^j)_{P_{p^{k}}}, \mathcal{X}^\star_{k}]\neq 0$, for all $j\in [1,a]$.
Lemma \ref{lem: 121} shows that $\chi^j(1)\geq (p-1)^{p^{k-1}}$ for all $j\in [1,a]$ and therefore we have that $\chi(1)\geq (p-1)^{ap^{k-1}}$. 
The statement now follows by observing that 
 $$(p-1)^{ap^{k-1}}\geq (p-1)^{ak}(p-1)^{a-1}(p-1)^{p-2}\geq (p-1)^{ak}a!(p-1!).$$
 The first inequality is a direct consequence of Lemma \ref{lem: analisi1} (see appendix). The second inequality is obvious because $a\leq p-1$.
 \end{proof}

\subsection{More on symmetric groups}

Roughly speaking, Theorem \ref{thm: GL3} shows that a partition $\lambda$ that is neither \textit{too wide} or \textit{too tall} corresponds to an irreducible character $\chi^\lambda$ whose restriction to $P_n$ picks up $\mathcal{X}^\star_{(n)}$ as an irreducible constituent. This in turn implies that $\chi^\lambda(1)$ is \textit{big}, by Lemmas \ref{lem: 121} and \ref{lem: 11}. For these reasons it is crucial for us to find combinatorial conditions to control how wide and how tall a partition is. This is the first goal of this section.

\begin{define}\label{def: nlambda}
Let $\lambda\in\mathcal{P}(n)$ and let $s$ be a natural number. Let $Q_s(\lambda)=(\lambda^{(0)},\ldots, \lambda^{(s-1)})$ be the $s$-quotient of $\lambda$. We let $N_{s}(\lambda)$ be the integer defined as follows: 
$$N_{s}(\lambda)=\mathrm{max}\{(\lambda^{(j)})_1, \ell(\lambda^{(j)})\ |\ j\in [0,s-1]\}.$$
In other words, $N_s(\lambda)$ is the largest number among all first parts and all lengths (i.e. number of parts) of the $s$ partitions involved in the $s$-quotient of $\lambda$. 
%
\end{define}

\begin{rem}\label{rem: Nslambda}
Notice that given $s\in\mathbb{N}$, the integer $N_s(\lambda)$ depends only on the partition $\lambda$ and does not depend on the choice of the $\beta$-set for $\lambda$. This fact can be easily deduced from Remark \ref{rem: quotient}.
\end{rem}

We pause to give a few examples of the new invariant introduced in Definition \ref{def: nlambda}.

\begin{example}
Let $\lambda=(7,5,5,3,3)\in\mathcal{P}(23)$ and let $s=5$. The set $$X=\{16,13,12,9,8,4,3,2,1,0\}$$ is a $\beta$-set for $\lambda$. In fact we have that $X=X_\lambda^{+5}$, where $X_\lambda=\{11,8,7,4,3\}$.
Considering the corresponding $5$-Abacus configuration we see that $C_5(\lambda)=(4,1,1,1,1)$ and that $$Q_5(\lambda)=(\emptyset,(2),(1),\emptyset,\emptyset).$$ It follows that $N_5(\lambda)=2$. 

On the other hand, given $\mu=(7,7,4,3,2,2,1\6{8})\in\mathcal{P}(33)$ we have that the set 
$$Y=\{23,22,18,16,14,13,11,10,9,8,7,6,5,4,2,1,0\},$$
is a $\beta$-set for $\mu$. It follows that 
$C_5(\mu)=(2,1)$ and that $$Q_5(\mu)=(\emptyset,\emptyset,(2), (1\6{4}),\emptyset).$$
We conclude that $N_5(\mu)=4$. 
It is important to notice that $$\lambda_1=7\leq 8+5\cdot 2=|C_5(\lambda)|+5\cdot N_5(\lambda),\
\text{and that}\ \ell(\mu)=14\leq 3+5\cdot 4=|C_5(\mu)|+5\cdot N_5(\mu).$$
The two equations displayed above are instances of a more general fact, that is fully explained by Proposition \ref{prop: decisiva} below. 
\end{example}

The following proposition will be used frequently in our proof of Theorem \ref{thm: MAIN} and it is proved using the combinatorics of partitions and $\beta$-sets, as introduced it in Section \ref{sec: beta}

\begin{prop}\label{prop: decisiva}
Let $a,n,r,s\in\mathbb{N}$ be such that $n=as+r$ and $r<s$. Let
$\lambda\in\mathcal{P}(n)$ be such that $C_s(\lambda)\in\mathcal{P}(r)$. Then $\lambda\in\mathcal{B}_n(r+sN_s(\lambda))$.
\end{prop}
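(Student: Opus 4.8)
The statement bounds the first part and the number of parts of $\lambda$ in terms of the $s$-core and the quantity $N_s(\lambda)$. By the symmetry $\mathcal{B}_n(t)=\mathcal{B}_n(t)^\circ$ and since conjugation of $\lambda$ corresponds to (a reversal and relabelling of) conjugating each component of the $s$-quotient, it suffices to bound $\lambda_1$; the bound on $\ell(\lambda)$ then follows by applying the result to $\lambda'$, noting $N_s(\lambda')=N_s(\lambda)$ and $C_s(\lambda')=C_s(\lambda)'$ still has size $r$. So the plan is to work with a conveniently chosen $\beta$-set and translate everything to the $s$-abacus.

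\textbf{Key steps.} First I would fix a $\beta$-set $X$ for $\lambda$ whose cardinality $|X|$ is divisible by $s$, say $|X|=st$, so that each runner $\ell\in[0,s-1]$ carries exactly some number $c_\ell$ of beads with $\sum_\ell c_\ell=st$, and the $s$-core is read off by sliding all beads to the top of their runners. Recall $\lambda_1 = \max(X) - (|X|-1)$. Now $\max(X)$ sits on some runner $\ell_0$ in some position $a_0$, i.e. $\max(X)=s a_0+\ell_0$, and this bead is the highest bead on runner $\ell_0$; hence $a_0 = c_{\ell_0}-1 + (\text{number of vacant positions on runner } \ell_0 \text{ above it})$. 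The number of such vacant positions is at most $(\lambda^{(\ell_0)})_1 \le N_s(\lambda)$, because on the abacus picture for a single runner the first part of the corresponding quotient component equals the number of gaps above the top bead (this is exactly the statement that $P(X^{\ell_0})$ has first part counting vacancies below... — more precisely $(\lambda^{(\ell_0)})_1 = h_1 - (c_{\ell_0}-1)$ where $h_1=\max X^{\ell_0}$, which is the number of empty positions strictly below... above the beads). So $a_0 \le c_{\ell_0}-1+N_s(\lambda)$.

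\textbf{Finishing.} Then $\max(X) = sa_0+\ell_0 \le s(c_{\ell_0}-1+N_s(\lambda))+(s-1)$, while $|X|-1 = st-1 = s\sum_\ell c_\ell - 1 \ge s c_{\ell_0}-1$. Therefore
\[
\lambda_1 = \max(X)-(|X|-1) \le s(c_{\ell_0}-1)+sN_s(\lambda)+s-1 - (sc_{\ell_0}-1) = sN_s(\lambda),
\]
wait — I need to account for the other runners and the core. The cleaner bookkeeping: the contribution of the core is $|C_s(\lambda)|=r$ (each vacancy on runner $\ell$ strictly below the top $c_\ell$ beads of the packed configuration that lies below a bead contributes; summing gives $r$, since $|\lambda|=r+s|Q_s(\lambda)|$ and removing $s$-hooks packs beads). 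I would make this precise by writing $\max(X)$ as (packed position of the top bead on runner $\ell_0$) plus $s\cdot(\#\text{vacancies above it})$, and bounding the packed first part by $r$ and the vacancy count by $N_s(\lambda)$. Concretely: if $Y$ is the packed $\beta$-set for $C_s(\lambda)$ with the same runner-counts $c_\ell$, then $\lambda_1 \le (C_s(\lambda))_1 + sN_s(\lambda) \le r + sN_s(\lambda)$, since passing from $Y$ to $X$ on runner $\ell_0$ shifts the top bead down by at most $N_s(\lambda)$ positions on that runner, i.e. increases its value by at most $sN_s(\lambda)$, and $\max(Y)-(|Y|-1)=(C_s(\lambda))_1\le r$. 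Thus $\lambda\in\mathcal{B}_n(r+sN_s(\lambda))$ after also treating $\lambda'$.

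\textbf{Main obstacle.} The delicate point is the clean identification "(first part of the $\ell$-th quotient component) $=$ (number of vacant abacus positions above the top bead on runner $\ell$)" together with the fact that replacing the core's packed configuration by the full configuration moves the global maximum up by a controlled amount — one must be careful that the runner achieving the global maximum of $X$ need not be the one achieving the global maximum of the packed set $Y$, so the comparison $\max(X)-(|X|-1)\le \max(Y)-(|Y|-1)+sN_s(\lambda)$ needs a short argument (compare the two maxima runner-by-runner, using that on every runner the top bead of $X$ lies at most $N_s(\lambda)$ positions below the top bead of $Y$). I expect that to be the only step requiring genuine care; everything else is routine abacus manipulation as set up in Section~\ref{sec: beta}.
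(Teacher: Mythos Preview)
Your argument is correct and follows the same underlying abacus idea as the paper: both compute $\lambda_1=\max(X)-(|X|-1)$ for a suitable $\beta$-set $X$, and both use that on each runner $\ell$ the first part of the quotient component $\lambda^{(\ell)}$ equals the number of vacant positions above the top bead. The execution differs: the paper starts from a $\beta$-set for the core $\gamma$ of size $as+\ell(\gamma)$, explicitly builds a $\beta$-set for $\lambda$ runner by runner, and does a two-case analysis depending on whether a runner of the core carries $a$ or $a+1$ beads (this is where the hypothesis $r<s$ enters). You instead compare the $\beta$-set $X$ for $\lambda$ with the packed set $Y$ for $\gamma$ via the uniform runner-by-runner bound $m_\ell^X-m_\ell^Y=s(\lambda^{(\ell)})_1\le sN_s(\lambda)$, and then take maxima; this is cleaner and in fact yields the sharper inequality $\lambda_1\le (C_s(\lambda))_1+sN_s(\lambda)$ without ever using $r<s$, confirming the paper's own remark immediately after the proposition that this hypothesis may be superfluous. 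The ``main obstacle'' you flag is indeed harmless: since $m_\ell^X\le m_\ell^Y+sN_s(\lambda)$ holds on every runner, taking the maximum over $\ell$ gives $\max(X)\le \max(Y)+sN_s(\lambda)$ regardless of which runners realise the two maxima, and $|X|=|Y|$ finishes it. Your reduction to $\lambda_1$ via conjugation is also fine, using the standard facts $C_s(\lambda')=C_s(\lambda)'$ and that the components of $Q_s(\lambda')$ are the conjugates of those of $Q_s(\lambda)$ (reordered), so $N_s(\lambda')=N_s(\lambda)$.
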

\begin{proof}
Let $X=(X_\gamma)\6{+as}$ be the $\beta$-set for $\gamma$ corresponding to the $s$-abacus configuration for $\gamma$ having first vacant position labelled by $as$ (position $a$ on runner $0$). 
We remark that $X$ (as well as all the other $\beta$-sets considered in this proof) may have size not divisible by $s$. This does not affect our arguments, as explained in Remarks \ref{rem: quotient} and \ref{rem: Nslambda}.
Since $\gamma\in\mathcal{P}(r)$ is an $s$-core and since $r<s$ we have that $x<(a+1)s$ for every $x\in X$. Therefore we deduce that 
$$X^q\in\{\{0,1,2,\ldots, a-1\}, \{0,1,2,\ldots, a\}\}, \text{for  every}\ q\in [0,s-1].$$ 
Let $Q_s(\lambda)=(\lambda^{0},\ldots, \lambda^{(s-1)})$ be the $s$-quotient of $\lambda$ and let $\lambda^{(q)}=(d_1^q,d_2^q,\ldots, d_{t_q}^q)$, for every $q\in [0,s-1]$ (here $t_q$ is some integer smaller than or equal to $a$). 
Regarding $d_x^q=0$ for every $x>t_q$ we let 
$$Y^q= \begin{cases}
		\{(a-x)+d_x^q\ |\ x\in [1,a]\} & \text{if } X^q=\{0,1,2,\ldots, a-1\},\\
		
		\{(a-x)+d_{x+1}^q\ |\ x\in [0,a]\} & \text{if }  X^q=\{0,1,2,\ldots, a-1,a\}.
\end{cases}$$
It is easy to see that $P(Y^q)=\lambda^{(q)}$ for every $q\in [0,s-1]$, and that $Y=\{s\cdot y+q\ |\ y\in Y^q\}$ is a $\beta$-set for $\lambda$, i.e. $P(Y)=\lambda$. 
In fact, combinatorially speaking, $Y$ is obtained from $X$ by sliding down beads on runner $q$ according to the partition $\lambda^{(q)}$. 
Let $M=\mathrm{max}(Y)$ and let $q\in [0,s-1]$ be such that $M=xs+q$ (i.e. the $s$-abacus configuration corresponding to $Y$ has a bead in position $x$ on runner $q$). 
From the definition of $Y$ we have that $$M\in\{(a-1+d_1^q)s+q, (a+d_1^q)s+q\}.$$ 
Moreover, since $\lambda=P(Y)$, we know that $\lambda_1=M-(|Y|-1)$.  
Since $|Y|=|X|=as+\ell(\gamma)$ we have that 

$$\lambda_1=\begin{cases} sd_1^q-s-\ell(\gamma)+q+1 & \text{if } X^q=\{0,1,2,\ldots, a-1\},\\
		
		sd_1^q-\ell(\gamma)+q+1  & \text{if }  X^q=\{0,1,2,\ldots, a-1,a\}.
\end{cases}$$

If $a\notin X^q$ then using that $q\leq s-1$ and that $\ell(\gamma)\geq 0$ we have that 
$$\lambda_1=sd_1^q-s-\ell(\gamma)+q+1\leq sd_1^q+r.$$ 
On the other hand if $a\in X^q$ then $q\leq r$ (because $\gamma\in \mathcal{P}(r))$ and therefore we have again that 
$\lambda_1=sd_1^q-\ell(\gamma)+q+1\leq sd_1^q+r$. Since $d^q_1\leq N_s(\lambda)$
We conclude that $$\lambda_1\leq sd_1^q+r\leq sN_s(\lambda)+r,$$ as desired. 

A completely similar argument shows that $\ell(\lambda)\leq sN_s(\lambda)+r$. 
\end{proof}

We note that the assumption $r<s$ in the statement of Proposition \ref{prop: decisiva} may be superfluous. However, we have included it because, in this article, we only apply Proposition \ref{prop: decisiva} in contexts where this condition is certainly satisfied.

We fix some new notation that will be used frequently later on in the paper. 

\begin{nota}\label{not: Deltas}
Let $p$ be a prime and let $n=ap^k+r$ for some $r<p^k$ and some $a\in [1,p-1]$. Given any $\gamma\in\mathcal{P}_{p'}(r)$ we let $$\mathcal{P}(n\ |\ \gamma)=\{\lambda\in\mathcal{P}(n)\ |\ C_{p^k}(\lambda)=\gamma\}.$$
We observe that $\mathcal{P}(n\ |\ \gamma)\subseteq\mathcal{P}_{p'}(n)$ by Proposition \ref{prop: pdivisibility}, and moreover
 $\lambda\in\mathcal{P}(n\ |\ \gamma)$ if and only if $\chi^\lambda\in\mathrm{Irr}(S_n\ |\ \gamma)$. 

Given $\lambda\in \mathcal{P}(n\ |\ \gamma)$ we have that $Q_{p^k}(\lambda)$ is a sequence of $p^k$ partitions whose sizes sum up to $a$. With this in mind, for every $x\in [1,a]$ we denote by $\Delta_x$ the subset of $\mathcal{P}(n\ |\ \gamma)$ defined by
$$\Delta_x=\{\lambda\in\mathcal{P}(n\ |\ \gamma)\ |\ N_{p\6k}(\lambda)=x\}.$$

\smallskip

Observe that $\Delta_x$ is non empty for every $x\in [1,a]$, and that $$\mathcal{P}(n\ |\ \gamma)=\bigcup_{x\in [1,a]}\Delta_x,$$
where the above union is clearly a disjoint union. 
We remark that the notation $\Delta_x$ does not record the dependence on $n$ and $\gamma$ because these are fixed at the start and clear from the context. Similarly, when later in the article we will make use of the sets $\Delta_1,\Delta_2,\ldots, \Delta_a$ the variables $n$ and $\gamma$ will always be fixed and clear from the context.  

We conclude by observing that $\Delta_x\subseteq \mathcal{B}_n(r+xp^k)=\mathcal{B}_n(n-(a-x)p^k)$. This is an immediate consequence of Proposition \ref{prop: decisiva}.
\end{nota}

Using the notation just introduced, we compute the sizes of the sets $\Delta_x$, for a few specific values of $x$. 

\begin{lem}\label{lem: sizeDelta}
Let $p$ be a prime and let $n=ap^k+r$ for some $r<p^k$ and some $a\in [2,p-1]$. Then 
$|\Delta_a|=2p\6k$. Moreover, if $a\geq 3$ then 
$$|\Delta_{a-1}|=\begin{cases}
		2p\6{2k}-p\6k & \text{if } a=3,\\
		
		2p\6{2k} & \text{if }  a\geq 4.
\end{cases}
$$
Finally, assuming that $a\geq 6$ then we have 
$$|\Delta_{a-2}|= p\6{3k}+3p\6{2k}.$$
\end{lem}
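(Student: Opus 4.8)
The plan is to reduce the computation of $|\Delta_x|$ to a purely combinatorial count of $p^k$-tuples of partitions. Since $r<p^k$, the partition $\gamma$ is automatically a $p^k$-core, so by \cite[Proposition 3.7]{OlssonBook} (fixing the usual convention on $\beta$-set sizes) the assignment $\lambda\mapsto Q_{p^k}(\lambda)$ restricts to a bijection from $\mathcal{P}(n\mid\gamma)$ onto the set of ordered $p^k$-tuples $(\mu^{(0)},\dots,\mu^{(p^k-1)})$ of partitions with $\sum_j|\mu^{(j)}|=a$; here we used that $|Q_{p^k}(\lambda)|=(n-r)/p^k=a$ for every $\lambda\in\mathcal{P}(n\mid\gamma)$, which follows from $|\lambda|=|C_{p^k}(\lambda)|+p^k|Q_{p^k}(\lambda)|$. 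Writing $w(\mu)=\max\{\mu_1,\ell(\mu)\}$ for a partition $\mu$ (so $w(\emptyset)=0$ and $w(\mu)\le|\mu|$), Definition~\ref{def: nlambda} together with Remark~\ref{rem: Nslambda} says precisely that $\lambda\in\Delta_x$ if and only if the corresponding tuple satisfies $\max_j w(\mu^{(j)})=x$. So it suffices to count, for $x\in\{a,a-1,a-2\}$, the $p^k$-tuples of total size $a$ whose component $w$-values have maximum equal to $x$.

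The key structural observation is that a component $\mu^{(j)}$ with $w(\mu^{(j)})\ge a-2$ has size $\ge a-2$, and since $a\ge 6$ two such components would force the total size to exceed $a$; hence in each of the three regimes there is a \emph{unique} coordinate realising the maximal $w$-value $x$ (the ``leading'' coordinate), its component $\nu$ having $w(\nu)=x$ and size $m$ with $x\le m\le a$, while all remaining nonzero components have total size $a-m\le 2<a-2\le x$ and therefore never affect the maximum. So the count factors as: choose the leading coordinate ($p^k$ ways); choose $\nu$ (a partition of some $m\in\{x,\dots,a\}$ with $w(\nu)=x$); then distribute the $a-m$ leftover boxes among the other $p^k-1$ coordinates in all possible ways (legitimate since $p^k>a$).

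To carry this out I would first record the needed small enumerations: a partition $\nu$ of $N$ has $w(\nu)=N$ iff $\nu\in\{(N),(1^N)\}$ (two such for $N\ge2$); $w(\nu)=N-1$ iff $\nu\in\{(N-1,1),(2,1^{N-2})\}$ (two such for $N\ge4$, but only one, $(2,1)$, for $N=3$); and $w(\nu)=N-2$ iff $\nu\in\{(N-2,2),(N-2,1^2),(3,1^{N-3}),(2^2,1^{N-4})\}$, these being four distinct partitions once $N\ge6$. For $|\Delta_a|$ only $m=a$ occurs, giving $2p^k$. For $|\Delta_{a-1}|$ the leading component has size $m=a$ with $w=a-1$ (contributing $2p^k$ if $a\ge4$, $p^k$ if $a=3$) or size $m=a-1$ with $w=a-1$ plus one leftover box forming a singleton in another coordinate (contributing $2p^k(p^k-1)$), and summing gives $2p^{2k}$ for $a\ge4$ and $2p^{2k}-p^k$ for $a=3$. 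For $|\Delta_{a-2}|$ (with $a\ge6$) the three sub-cases $m\in\{a-2,a-1,a\}$ contribute respectively $2p^k\bigl(2(p^k-1)+\binom{p^k-1}{2}\bigr)$, $2p^k(p^k-1)$, and $4p^k$; here in the first sub-case the factor $2p^k$ counts the choice of $(a-2)$ or $(1^{a-2})$ together with its position, while the bracket counts placing the two leftover boxes either as a single size-$2$ partition ($(2)$ or $(1^2)$) in one further coordinate or as two singletons in two further coordinates. Adding the three contributions and simplifying yields $p^{3k}+3p^{2k}$.

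The routine parts are the three arithmetic simplifications and checking that the listed small partitions are pairwise distinct, and genuinely have the stated $w$-values, in the indicated ranges of $a$. The only delicate point, and the one I would be most careful about, is the bookkeeping in the $x=a-2$ case: establishing the uniqueness of the leading coordinate (this is exactly where $a\ge6$ enters), and not conflating the ordered coordinates of the $p^k$-tuple with unordered partition components when distributing the two leftover boxes — which is precisely what produces the $\binom{p^k-1}{2}$ term.
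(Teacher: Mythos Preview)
Your proposal is correct and follows essentially the same approach as the paper: both reduce to enumerating the possible $p^k$-quotients $(\lambda^{(0)},\dots,\lambda^{(p^k-1)})$ of total size $a$ whose maximal component width/length equals the prescribed value, and both identify the same finite lists of candidate shapes for the ``leading'' component. Your write-up is in fact more detailed than the paper's, which omits the $|\Delta_{a-2}|$ computation entirely; one small presentational point is that your uniqueness-of-leading-coordinate argument is phrased only under $a\ge 6$, whereas for $|\Delta_a|$ and $|\Delta_{a-1}|$ the lemma assumes only $a\ge 2$ and $a\ge 3$ respectively --- the same reasoning (two components of size $\ge x$ would overshoot $a$) still gives uniqueness in those regimes, so you may want to state that explicitly.
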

\begin{proof}
The first statement is easily seen because $\lambda\in\Delta_a$ if and only if $C_{p^k}(\lambda)=\gamma$ and $N_{p^k}(\lambda)=a$. This in turn is equivalent to have $C_{p^k}(\lambda)=\gamma$ and $Q_{p^k}(\lambda)=(\lambda^{(0)}, \lambda^{(1)}, \ldots, \lambda^{(p^k-1)})$ satisfying the following conditions:
there exists a unique $j\in [0,p^k-1]$ such that $\lambda^{(i)}=\emptyset$ for every $i\neq j$ and $\lambda^{(j)}\in \{(a), (1^a)\}$. Since $a\geq 2$ we conclude that $|\Delta_a|=2p\6k$.

To prove the second statement we observe that $\Delta_{a-1}$ consists of all those partitions $\lambda$ such that $C_{p^k}(\lambda)=\gamma$ and such that $Q_{p^k}(\lambda)=(\lambda^{(0)},\lambda^{(1)},\ldots,\lambda^{(p^k-1)})$ is a sequence of $p^k$ partitions that is obtained by permuting the components of one of the following sequences: 
$$(\nu, \emptyset,\emptyset, \ldots, \emptyset)\ \text{or}\ (\rho,(1),\emptyset, \ldots, \emptyset),$$
where $\nu\in\{(a-1,1), (2,1^{a-2})\}$ and $\rho\in\{(a-1), (1^{a-1})\}$. 
Observing that when $a=3$ we have $(a-1,1)=(2,1)=(2,1\6{a-2})$, we conclude that 
$$|\Delta_{a-1}|=\begin{cases}
		2p\6{2k}-p\6k & \text{if } a=3,\\
		
		2p\6{2k} & \text{if }  a\geq 4,
\end{cases}
$$
as desired. 
The third statement, concerning the size of $|\Delta_{a-2}|$ is proved with a completely similar argument. The details are therefore omitted. 
\end{proof}

\section{A proof of Theorem \ref{thm: MAIN}}\label{sec: theorem B}

The goal of this section is to show that for any fixed prime number $p\geq 3$, there exists a bijection 
$$\varepsilon_n:\mathrm{Irr}_{p'}(S_n)\rightarrow\mathrm{Irr}_{p'}(N_n),$$ such that $\varepsilon_n(\chi)(1)\leq\chi(1)$, for every $\chi\in\mathrm{Irr}_{p'}(S_n)$. 

In order to do this we will proceed by induction on the $p$-adic length of the natural number $n$. The following Proposition \ref{prop: strategy} allows us to reduce the problem and gives an illustration of our strategy to construct the desired bijection $\varepsilon_n$.

Before stating it we introduce the following piece of notation. Let $n=ap\6k+r$ for some $r<p\6k$ and some $a\in [1,p-1]$. From Section \ref{subsec: normalisers} we know that $N_n=N_{ap\6k}\times N_r$. For any $\eta\in\mathrm{Irr}_{p'}(N_r)$ we let $\mathrm{Irr}_{p'}(N_n\ |\ \eta)$ be the subset of $\mathrm{Irr}_{p'}(N_n)$ defined by 
$$\mathrm{Irr}_{p'}(N_n\ |\ \eta)=\{\theta\times\eta\ |\ \theta\in\mathrm{Irr}_{p'}(N_{ap\6k})\}.$$
It is clear that $\mathrm{Irr}_{p'}(N_n)$ is equal to the disjoint union of the sets $\mathrm{Irr}_{p'}(N_n\ |\ \eta)$, where $\eta$ runs among all elements of $\mathrm{Irr}_{p'}(N_r)$.

\begin{rem}\label{rem: McKay2}
Given any $\gamma\in\mathcal{P}_{p'}(r)$ and any $\eta\in\mathrm{Irr}_{p'}(N_r)$, we have that $$|\mathrm{Irr}(S_n\ |\ \gamma)|=|\mathrm{Irr}_{p'}(N_n\ |\ \eta)|.$$
This is an immediate consequence of Lemma \ref{lem: McKay}, since $|\mathrm{Irr}_{p'}(N_n\ |\ \eta)|=|\mathrm{Irr}_{p'}(N_{ap^k})|$. 
\end{rem}

\begin{prop}\label{prop: strategy}
Let $n=ap\6k+r$ for some $r<p\6k$ and some $a\in [1,p-1]$. Suppose that the following hold. 
\begin{itemize}
\item[(a)] There exists a bijection $\varepsilon_r:\mathrm{Irr}_{p'}(S_r)\rightarrow\mathrm{Irr}_{p'}(N_r),$ such that $\varepsilon_r(\chi)(1)\leq\chi(1)$, for every $\chi\in\mathrm{Irr}_{p'}(S_r)$. 

\item[(b)] For every $\gamma\in\mathcal{P}_{p'}(r)$ there exists a bijection $\varepsilon_\gamma: \mathrm{Irr}(S_n\ |\ \gamma)\rightarrow\mathrm{Irr}_{p'}(N_n\ |\ \varepsilon_r(\chi\6\gamma))$, such that $\varepsilon_\gamma(\chi)(1)\leq\chi(1)$ for all $\chi\in \mathrm{Irr}(S_n\ |\ \gamma)$. 
\end{itemize}
Then there exists a bijection 
$\varepsilon_n:\mathrm{Irr}_{p'}(S_n)\rightarrow\mathrm{Irr}_{p'}(N_n),$ such that $\varepsilon_n(\chi)(1)\leq\chi(1)$, for every $\chi\in\mathrm{Irr}_{p'}(S_n)$. 
\end{prop}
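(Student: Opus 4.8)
The plan is to assemble the global bijection $\varepsilon_n$ by gluing together the bijections provided by hypotheses $(a)$ and $(b)$, using the block decompositions of $\mathrm{Irr}_{p'}(S_n)$ and $\mathrm{Irr}_{p'}(N_n)$ recorded in Section~\ref{sec: 2} and in the paragraph preceding the statement. Concretely, Proposition~\ref{prop: pdivisibility} (as spelled out right after it) gives the disjoint decomposition $\mathrm{Irr}_{p'}(S_n)=\bigcup_{\gamma\in\mathcal{P}_{p'}(r)}\mathrm{Irr}(S_n\mid\gamma)$, while the discussion before the proposition gives $\mathrm{Irr}_{p'}(N_n)=\bigcup_{\eta\in\mathrm{Irr}_{p'}(N_r)}\mathrm{Irr}_{p'}(N_n\mid\eta)$, also a disjoint union. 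Since $\varepsilon_r$ from $(a)$ is a bijection $\mathrm{Irr}_{p'}(S_r)\to\mathrm{Irr}_{p'}(N_r)$ and since $\chi^\gamma\mapsto\mathrm{Irr}(S_n\mid\gamma)$ is a bijective parametrization of the blocks of $\mathrm{Irr}_{p'}(S_n)$ by $\mathcal{P}_{p'}(r)=\{\gamma : \chi^\gamma\in\mathrm{Irr}_{p'}(S_r)\}$, the map $\gamma\mapsto\varepsilon_r(\chi^\gamma)$ is a bijection from the index set $\mathcal{P}_{p'}(r)$ onto the index set $\mathrm{Irr}_{p'}(N_r)$ of the second decomposition.

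First I would define $\varepsilon_n$ piecewise: for $\chi\in\mathrm{Irr}(S_n\mid\gamma)$, set $\varepsilon_n(\chi)=\varepsilon_\gamma(\chi)$, where $\varepsilon_\gamma$ is the bijection $\mathrm{Irr}(S_n\mid\gamma)\to\mathrm{Irr}_{p'}(N_n\mid\varepsilon_r(\chi^\gamma))$ supplied by $(b)$. This is well-defined because the sets $\mathrm{Irr}(S_n\mid\gamma)$ partition $\mathrm{Irr}_{p'}(S_n)$, so every $\chi$ lies in exactly one such piece. Next I would check injectivity and surjectivity: if $\gamma\neq\gamma'$ then $\varepsilon_r(\chi^\gamma)\neq\varepsilon_r(\chi^{\gamma'})$ since $\varepsilon_r$ is injective, hence $\mathrm{Irr}_{p'}(N_n\mid\varepsilon_r(\chi^\gamma))$ and $\mathrm{Irr}_{p'}(N_n\mid\varepsilon_r(\chi^{\gamma'}))$ are disjoint, so distinct pieces map into disjoint targets; since each $\varepsilon_\gamma$ is a bijection onto its target piece, $\varepsilon_n$ is injective overall. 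For surjectivity, any $\zeta\in\mathrm{Irr}_{p'}(N_n)$ lies in $\mathrm{Irr}_{p'}(N_n\mid\eta)$ for a unique $\eta\in\mathrm{Irr}_{p'}(N_r)$; since $\varepsilon_r$ is onto, $\eta=\varepsilon_r(\chi^\gamma)$ for a unique $\gamma\in\mathcal{P}_{p'}(r)$, and then $\zeta$ is hit by $\varepsilon_\gamma$, hence by $\varepsilon_n$. The degree inequality is immediate and local: for $\chi\in\mathrm{Irr}(S_n\mid\gamma)$ we have $\varepsilon_n(\chi)(1)=\varepsilon_\gamma(\chi)(1)\leq\chi(1)$ directly from $(b)$, with no interaction between the pieces.

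There is essentially no obstacle here: this statement is purely a bookkeeping lemma that repackages the two hypotheses. The only thing to be a little careful about is that the two decompositions of $\mathrm{Irr}_{p'}(N_n)$ and $\mathrm{Irr}_{p'}(S_n)$ are indexed compatibly, i.e.\ that $\gamma\mapsto\varepsilon_r(\chi^\gamma)$ is genuinely a bijection between the index sets $\mathcal{P}_{p'}(r)$ and $\mathrm{Irr}_{p'}(N_r)$; this follows because $\gamma\mapsto\chi^\gamma$ identifies $\mathcal{P}_{p'}(r)$ with $\mathrm{Irr}_{p'}(S_r)$ by definition of $\mathcal{P}_{p'}(r)$, and $\varepsilon_r$ is a bijection by $(a)$. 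All the real work of the argument has been deferred into hypotheses $(a)$ (handled by induction on the $p$-adic length, with the base case being a symmetric group $S_r$ with $r<p^k$ of strictly smaller $p$-adic length) and $(b)$ (the genuinely hard construction, carried out in the remainder of the paper using Theorem~\ref{thm: GL3}, the degree bounds of Lemmas~\ref{lem: 121} and~\ref{lem: 11}, Lemma~\ref{lem: McKay}, Lemma~\ref{lem: bess}, Proposition~\ref{prop: decisiva}, and the size computations of Lemma~\ref{lem: sizeDelta}).
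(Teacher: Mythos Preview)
Your proposal is correct and follows essentially the same approach as the paper's proof: both decompose $\mathrm{Irr}_{p'}(S_n)$ and $\mathrm{Irr}_{p'}(N_n)$ as disjoint unions indexed by $\gamma\in\mathcal{P}_{p'}(r)$ (the latter via the bijection $\varepsilon_r$ from hypothesis~(a)), and then define $\varepsilon_n$ piecewise by $\varepsilon_n(\chi)=\varepsilon_\gamma(\chi)$ for $\chi\in\mathrm{Irr}(S_n\mid\gamma)$. Your write-up is simply more explicit about injectivity, surjectivity, and the degree inequality than the paper's terse version, but the content is the same.
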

\begin{proof}
From Hypothesis (a) we have that $$\mathrm{Irr}_{p'}(N_n)=\bigcup_{\gamma\in\mathcal{P}_{p'}(r)}\mathrm{Irr}_{p'}(N_n\ |\ \varepsilon_r(\chi\6\gamma)).$$ Similarly, the discussion after Proposition \ref{prop: pdivisibility} implies that $$\mathrm{Irr}_{p'}(S_n)=\bigcup_{\gamma\in\mathcal{P}_{p'}(r)}\mathrm{Irr}(S_n\ |\ \gamma).$$
Both the above unions are clearly disjoint. 
Hence, setting $\varepsilon_n(\chi)=\varepsilon_\gamma(\chi)$ for every $\chi\in\mathrm{Irr}(S_n\ |\ \gamma)$ and for every $\gamma\in\mathcal{P}_{p'}(r)$, defines a bijection between $\mathrm{Irr}_{p'}(S_n)$ and $\mathrm{Irr}_{p'}(N_n)$ that satysfies the desired inequality between degrees of corresponding characters.  
\end{proof}

In light of Proposition \ref{prop: strategy} we will now use the inductive hypthesis to assume the validity of Hypothesis (a) and we will focus on proving Hypothesis (b) for every $\gamma\in\mathcal{P}_{p'}(r)$. 

We start by dealing with a special case. 

\begin{prop}\label{prop: auguale1}
Let $r,k\in\mathbb{N}$ be such that $p\6k>r$. Suppose that $n=p\6k+r$ and that there exists a bijection $\varepsilon_r:\mathrm{Irr}_{p'}(S_r)\rightarrow\mathrm{Irr}_{p'}(N_r),$ such that $\varepsilon_r(\chi)(1)\leq\chi(1)$, for every $\chi\in\mathrm{Irr}_{p'}(S_r)$. 
Then there exists a bijection $$\varepsilon_n: \mathrm{Irr}_{p'}(S_n)\rightarrow \mathrm{Irr}_{p'}(N_n),$$
such that $\varepsilon_n(\chi)(1)\leq\chi(1)$, for all $\chi\in \mathrm{Irr}_{p'}(S_n).$
\end{prop}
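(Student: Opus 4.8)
The plan is to deduce the statement from Proposition \ref{prop: strategy} applied with $a=1$. Hypothesis (a) there is exactly the assumption we are handed on $\varepsilon_r$, so everything reduces to establishing Hypothesis (b): for each $\gamma\in\mathcal{P}_{p'}(r)$ one must exhibit a bijection $\varepsilon_\gamma\colon\mathrm{Irr}(S_n\mid\gamma)\to\mathrm{Irr}_{p'}(N_n\mid\varepsilon_r(\chi^\gamma))$ with $\varepsilon_\gamma(\chi)(1)\le\chi(1)$. I would fix such a $\gamma$, note that it is a $p^k$-core because $|\gamma|=r<p^k$, and then argue that it suffices to construct a bijection $\tau\colon\{0,1,\dots,p^k-1\}\to\mathrm{Irr}_{p'}(N_{p^k})$ with $\tau(i)(1)\le\binom{p^k-1}{i}$; one then simply sets $\varepsilon_\gamma(\chi^{\lambda_i})=\tau(i)\times\varepsilon_r(\chi^\gamma)$.

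The first real step is to make both sides explicit. On the global side I would invoke Lemma \ref{lem: bess} with $x=p^k$: it gives $\mathrm{Irr}(S_n\mid\gamma)=\{\chi^{\lambda_0},\dots,\chi^{\lambda_{p^k-1}}\}$ where $\chi^{(p^k-i,1^i)}\times\chi^\gamma$ occurs in $(\chi^{\lambda_i})_{S_{p^k}\times S_r}$, so that $\chi^{\lambda_i}(1)\ge\binom{p^k-1}{i}\,\chi^\gamma(1)$. On the local side, $N_n=N_{p^k}\times N_r$ and $\mathrm{Irr}_{p'}(N_n\mid\varepsilon_r(\chi^\gamma))=\{\theta\times\varepsilon_r(\chi^\gamma):\theta\in\mathrm{Irr}_{p'}(N_{p^k})\}$, whose members have degree at most $\theta(1)\,\chi^\gamma(1)$ by the hypothesis on $\varepsilon_r$. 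Comparing these, the inequality $\tau(i)(1)\le\binom{p^k-1}{i}$ immediately yields $\varepsilon_\gamma(\chi^{\lambda_i})(1)\le\chi^{\lambda_i}(1)$, which is exactly why the problem collapses to constructing $\tau$. (As a consistency check, Lemma \ref{lem: McKay} and the count below both give $|\mathrm{Irr}(S_n\mid\gamma)|=p^k=|\mathrm{Irr}_{p'}(N_{p^k})|$, so $\tau$ is a bijection between sets of equal size.)

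To produce $\tau$ I would first record the degree distribution of $\mathrm{Irr}_{p'}(N_{p^k})$: using Proposition \ref{prop: normAction}, Clifford theory and Gallagher's theorem, exactly as in the proofs of Proposition \ref{prop: degreesNormPpower} and Corollary \ref{cor: linN}, there are precisely $\binom{k}{z}(p-1)^z$ irreducible $p'$-characters of $N_{p^k}$ of degree $(p-1)^{k-z}$, for each $z\in[0,k]$. Then, by Hall's marriage theorem, a bijection $\tau$ with $\tau(i)(1)\le\binom{p^k-1}{i}$ exists provided $\#\{\theta\in\mathrm{Irr}_{p'}(N_{p^k}):\theta(1)\ge d\}\le\#\{i:\binom{p^k-1}{i}\ge d\}$ for every $d$; since the degrees on the left are powers of $p-1$, it is enough to verify this for $d=(p-1)^{j_0}$, $j_0\in[0,k]$. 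The crucial elementary estimate is that $\binom{p^k-1}{i}\ge p^k-1\ge(p-1)^{j_0}$ for $1\le i\le p^k-2$, while $\binom{p^k-1}{0}=\binom{p^k-1}{p^k-1}=1$: hence the right-hand count is $p^k$ when $j_0=0$ and $p^k-2$ when $j_0\ge1$, whereas the left-hand count is $p^k-\sum_{j<j_0}\binom{k}{j}(p-1)^{k-j}$, which equals $p^k$ for $j_0=0$ and is at most $p^k-(p-1)^k\le p^k-2$ for $j_0\ge1$. This settles the Hall condition and hence produces $\tau$, completing the construction.

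I do not anticipate a genuine obstacle: the whole argument is essentially bookkeeping, and the only arithmetic fact needed on the local side is $(p-1)^k\ge2$, which holds whenever $p\ge3$. The two points deserving care are (i) verifying that the results quoted from \cite{GANT} really do yield the stated multiplicities for the degrees in $\mathrm{Irr}_{p'}(N_{p^k})$, and (ii) the conceptual point that $\tau$ need not respect any natural labelling of hook partitions or of local characters — Conjecture A asks only for a degree-decreasing bijection, so a purely numerical Hall-type matching of hook degrees against powers of $p-1$ is all that is required here.
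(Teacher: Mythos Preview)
Your argument is correct and follows essentially the same route as the paper's proof: both reduce to Hypothesis~(b) of Proposition~\ref{prop: strategy}, invoke Lemma~\ref{lem: bess} to enumerate $\mathrm{Irr}(S_n\mid\gamma)$, use $\binom{p^k-1}{i}\ge p^k-1\ge (p-1)^k$ for $1\le i\le p^k-2$ to show those characters dominate every local character, and send $\chi^{\lambda_0},\chi^{\lambda_{p^k-1}}$ to two of the $(p-1)^k\ge 2$ linear characters of $N_{p^k}$. Your Hall-theorem packaging and full degree-distribution count are correct but heavier than needed; the paper simply cites Lemma~\ref{lem: maxdegPpower} for the maximal local degree and Corollary~\ref{cor: linN} for $|\mathrm{Lin}(N_{p^k})|$, and builds $\varepsilon_\gamma$ directly.
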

\begin{proof}
We observe that Hypothesis (a) of Proposition \ref{prop: strategy} is satisfied. Hence we just have to show that Hypothesis (b) of Proposition \ref{prop: strategy} holds as well. To this end, we fix $\gamma\in\mathcal{P}_{p'}(r)$ and we observe that Lemma \ref{lem: bess} guarantees that $\mathcal{P}(n\ |\ \gamma)=\{\lambda_0,\lambda_1,\ldots,\lambda_{p\6k-1}\},$
where for every $i\in [0,p\6k-1]$ we have that $[\chi^{(p\6k-i,1^i)}\times\chi^\gamma, (\chi^{\lambda_i})_{S_{p\6k}\times S_r}]\neq 0$. In particular, for every $i\in [1,p\6k-2]$ and for every $\eta\in\mathrm{Irr}_{p'}(N_n\ |\ \varepsilon_r(\chi\6\gamma))$  we have that 
$$\chi^{\lambda_i}(1)\geq \chi^{(x-i,1^i)}(1)\cdot\chi^\gamma(1)\geq (p\6k-1)\cdot\chi\6\gamma(1)\geq (p-1)\6k\varepsilon_r(\chi\6\gamma)(1)\geq\eta(1).$$
Here the second inequality follows from Lemma \ref{lem: Rasala1}, the third one is a direct consequence of our hypotheses and the fourth one follows from Lemma \ref{lem: maxdegPpower}.
We just proved that, with the sole exception of $\chi^{\lambda_0}$ and of $\chi^{\lambda_{p\6k-1}}$, every other irreducible characters in $\mathrm{Irr}(S_n\ |\ \gamma)$ has degree larger than the degree of any irreducible character in $\mathrm{Irr}_{p'}(N_n\ |\ \varepsilon_r(\chi\6\gamma))$. To conclude we use Lemma \ref{lem: LR} to observe that for any $i\in \{0,p\6k-1\}$ we have 
$$\chi\6{\lambda_i}(1)\geq \chi\6\gamma(1)\geq \varepsilon_r(\chi\6\gamma)(1).$$
Let $N_0$ be the subset of $\mathrm{Irr}_{p'}(N_n\ |\ \varepsilon_r(\chi\6\gamma))$ consisting of those irreducible characters whose degree is equal to $\varepsilon_r(\chi\6\gamma)(1)$. 
From Corollary \ref{cor: linN} we know that $|N_0|=(p-1)\6k\geq 2$. 
Moreover, Remark \ref{rem: McKay2} shows that $$|\mathrm{Irr}(S_n\ |\ \gamma)|=|\mathrm{Irr}_{p'}(N_n\ |\ \varepsilon_r(\chi\6\gamma))|.$$
It follows that there exists a bijection 
$\varepsilon_\gamma: \mathrm{Irr}(S_n\ |\ \gamma)\rightarrow\mathrm{Irr}_{p'}(N_n\ |\ \varepsilon_r(\chi\6\gamma))$, such that $\varepsilon_\gamma(\{\chi\6{\lambda_0},\chi\6{\lambda_{p\6k-1}}\})\subseteq N_0$. 
The observations above guarantee that $\varepsilon_\gamma(\chi)(1)\leq\chi(1)$ for all $\chi\in \mathrm{Irr}(S_n\ |\ \gamma)$. This, together with Proposition \ref{prop: strategy}, concludes the proof.
\end{proof}

We pause to fix the notation for a collection of subsets of $\mathrm{Irr}_{p'}(N_n)$ consisting of irreducible characters of \textit{small} degree. We will repeatedly use these sets in the proof of several statements in Sections \ref{sec: theorem B} and \ref{sec: appendix}. 
We first recall that $N_{ap^k}=N_{p^k}\wr S_a$ and we refer the reader to Section \ref{sec: 2.2} for the notation we use for characters of wreath products.

\begin{nota}\label{not: subsets}
For any $k\in\mathbb{N}$ and any $a\in [2,p-1]$ let us consider the subsets $X_0(k), X(k)$, $Y_0(k)$ and $R(k)$ of $\mathrm{Irr}_{p'}(N_{ap^k})$ defined as follows: 
$$X_0(k)=\{\mathcal{X}(\phi; \chi^\nu)\ |\ \phi\in\mathrm{Lin}(N_{p^k}),\ \nu\in\{(a)\}^{\circ}\},$$
$$X(k)=\{\mathcal{X}(\phi; \chi^\nu)\ |\ \phi\in\mathrm{Lin}(N_{p^k}),\ \nu\in\{(a), (a-1,1)\}^{\circ}\},$$
$$Y_0(k)=\{(\mathcal{X}(\phi; \chi^\nu)\times\eta)^{N_{ap^k}}\ |\ \phi,\eta\in\mathrm{Lin}(N_{p^k}),\ \nu\in\{(a-1)\}^{\circ}\},$$
$$R(k)=\{(\mathcal{X}(\phi; \chi^\nu)\times\eta)^{N_{ap^k}}\ |\ \phi\in\mathrm{Lin}(N_{p^k}),\ \eta\in\mathrm{QLin}(N_{p^k}),\ \nu\in\{(a-1)\}^{\circ}\}.$$

\smallskip

Moreover, for $p\geq 5$ and $a\in [3,p-1]$, we define $Y(k), Z(k)\subseteq\mathrm{Irr}_{p'}(N_{ap^k})$ as follows:
$$Y(k)=\{(\mathcal{X}(\phi; \chi^\nu)\times\eta)^{N_{ap^k}}\ |\ \phi,\eta\in\mathrm{Lin}(N_{p^k}),\ \nu\in\{(a-1), (a-2,1)\}^{\circ}\}.$$
$$Z(k)=\{(\mathcal{X}(\phi; \chi^\nu)\times\eta_1\times\eta_2)^{N_{ap^k}}\ |\ \phi,\eta_1,\eta_2\in\mathrm{Lin}(N_{p^k}),\ \nu\in\{(a-2)\}^{\circ}\}.$$

\smallskip

We collect here a few remarks and observations concerning these five sets. 

\smallskip

\noindent\textbf{(a)} From Corollary \ref{cor: linN}, we deduce that the set $X_0(k)$ consists of $2(p-1)^k$ linear characters. Moreover, $X_0(k)\subseteq X(k)$ and every character in $X(k)\smallsetminus X_0(k)$ has degree equal to $a-1$. 

\smallskip

\noindent\textbf{(b)} In the definition of the sets $Y_0(k)$ and $Y(k)$ the linear characters $\phi$ and $\eta$ are chosen to be distinct, so that $\mathcal{X}(\phi; \chi^\nu)\times\eta$ is an irreducible character of $N_{(a-1)p^k}\times N_{p^k}\leq N_{ap^k}$, that induces irreducibly to $N_{ap^k}$. 
In particular we have that $\zeta(1)=a$ for every $\zeta\in Y_0(k)$, and that $\theta(1)\leq a(a-2)$ for every $\theta\in Y(k)$.

\smallskip

\noindent\textbf{(c)} Similarly, in the the definition of the set $R(k)$ we have that $\mathcal{X}(\phi; \chi^\nu)\times\eta$ is an irreducible character of degree $p-1$ of $N_{(a-1)p^k}\times N_{p^k}\leq N_{ap^k}$, that induces irreducibly to $N_{ap^k}$. 
It follows that $\zeta(1)=a(p-1)$ for every $\zeta\in R(k)$.

\smallskip

\noindent\textbf{(d)} In the definition of the set $Z(k)$ the linear characters $\phi$, $\eta_1$ and $\eta_2$ are chosen to be pairwise distinct, so that $\mathcal{X}(\phi; \chi^\nu)\times\eta_1\times\eta_2$ is an irreducible character of $N_{(a-2)p^k}\times N_{p^k}\times N_{p^k}\leq N_{ap^k}$, that induces irreducibly to $N_{ap^k}$. 
Again, we have that $\theta(1)\leq a(a-1)$, for every $\theta\in Z(k)$. 

\smallskip

\noindent\textbf{(e)} From $(a), (b)$ and $(c)$ for every $\theta\in X(k)\cup Y(k)\cup R(k)\cup Z(k)$ we have that $$\theta(1)\leq a(p-1)\leq ap^k-1.$$

\smallskip

Using the discussion in Section \ref{subsec: normalisers}, it is straightforward to obtain lower bounds for the sizes of the sets introduced above. (In most cases, we actually compute their exact sizes.)
The calculations below are carried out for specific values of the parameter $a$, chosen to match the cases that arise in the proofs of Theorems \ref{thm: base}, \ref{thm: B2}, and Proposition \ref{prop: base1}.

If $a=3$ then $(a-1,1)=(2,1\6{a-2})$ and hence 
$$|X(k)|=3(p-1)^k,\ |Y(k)|=2(p-1)^k((p-1)^k-1),\ |Z(k)|={(p-1)^k\choose 3}.$$

On the other hand, if $a\geq 4$ then
$$|X(k)|=4(p-1)^k,\ |Y(k)|\geq 3(p-1)^k((p-1)^k-1),\ |Z(k)|\geq 6\cdot{(p-1)^k\choose 3}.$$

Moreover, we have that 

$$|Y_0(k)|= \begin{cases}
		(p-1)^k\choose 2 & \text{if } a=2,\\
		
		\\
		
		2(p-1)^k((p-1)^k-1) & \text{if } a\geq 3.
	\end{cases}$$
	
	\smallskip
	
Finally, from Corollary \ref{cor: QlinN} we deduce that

$$|R(k)|= \begin{cases}
		k\cdot (p-1)^{2k-1} & \text{if } a=2,\\
		
		\\
		
		2k\cdot (p-1)^{2k-1} & \text{if } a\geq 3.
	\end{cases}$$
\end{nota}

We are now ready to treat the case where $n$ has $p$-adic length equal to $1$. This is the base case of our inductive argument. 

\begin{thm}\label{thm: base}
Let $n=ap^k+r$ for some $a\in [1,p-1]$ and some $r\in [0,p-1]$. There exists a bijection 
$\varepsilon_n:\mathrm{Irr}_{p'}(S_n)\rightarrow\mathrm{Irr}_{p'}(N_n),$ such that $\varepsilon_n(\chi)(1)\leq\chi(1)$, for every $\chi\in\mathrm{Irr}_{p'}(S_n)$. 
\end{thm}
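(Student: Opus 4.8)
The plan is to prove Theorem~\ref{thm: base} by induction on $r$ (the $p$-adic length being $1$ here, since $k\geq 1$ and $a,r\leq p-1$), using Proposition~\ref{prop: strategy} to reduce to constructing, for each $\gamma\in\mathcal{P}_{p'}(r)$, a degree-decreasing bijection $\varepsilon_\gamma\colon \mathrm{Irr}(S_n\mid\gamma)\to\mathrm{Irr}_{p'}(N_n\mid\varepsilon_r(\chi^\gamma))$. The case $a=1$ is already settled by Proposition~\ref{prop: auguale1}, so I may assume $a\in[2,p-1]$. Since $N_n=N_{ap^k}\times N_r$ and $N_{ap^k}=N_{p^k}\wr S_a$, the target set $\mathrm{Irr}_{p'}(N_n\mid\varepsilon_r(\chi^\gamma))$ is in bijection with $\mathrm{Irr}_{p'}(N_{ap^k})$, and by Proposition~\ref{prop: maxdegAP} every character in it has degree at most $(p-1)^{ak}\cdot d\cdot\varepsilon_r(\chi^\gamma)(1)$ where $d=\max\{\chi(1):\chi\in\mathrm{Irr}(S_a)\}$; in fact, using Corollary~\ref{cor: maxdegN}, at most $(p-1)^{ak}a!(p-1)!$ — a bound that does not grow with $k$.

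The core of the argument is a degree comparison organised via the stratification $\mathcal{P}(n\mid\gamma)=\bigcup_{x\in[1,a]}\Delta_x$ from Notation~\ref{not: Deltas}. The idea is that partitions $\lambda$ with $N_{p^k}(\lambda)=x$ large correspond to characters $\chi^\lambda$ of large degree: indeed, by Proposition~\ref{prop: decisiva} a partition in $\Delta_x$ lies in $\mathcal{B}_n(r+xp^k)$, but more importantly a partition in $\Delta_x$ with $x$ not too small sits inside a ``fat'' box, so Theorem~\ref{thm: GL3} applies (when $x$ is large enough that $r+xp^k\leq$ the relevant threshold... actually the relevant direction is the opposite: when $x$ is \emph{large}, $\lambda$ is wide/tall and we must instead bound $\chi^\lambda(1)$ from below directly). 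The cleanest route: for the ``generic'' stratum, i.e.\ $\lambda\in\Delta_x$ with $x\leq a-3$ or so, the quotient $Q_{p^k}(\lambda)$ has all parts of size $\leq a-3$, the partition is neither too wide nor too tall, so $\chi^\lambda$ restricted to $P_n$ contains $\mathcal{X}^\star_{(n)}$ and hence $\chi^\lambda(1)\geq (p-1)^{ak}a!(p-1)!$ by Lemma~\ref{lem: 11} — which dominates \emph{every} character degree on the $N_n$ side. So all characters in $\bigcup_{x\leq a-3}\Delta_x$ may be matched arbitrarily (cardinalities permitting) with $N_n$-characters. It then remains to match the ``exceptional'' strata $\Delta_a$, $\Delta_{a-1}$, $\Delta_{a-2}$, whose sizes are computed in Lemma~\ref{lem: sizeDelta} ($|\Delta_a|=2p^k$, $|\Delta_{a-1}|\approx 2p^{2k}$, $|\Delta_{a-2}|\approx p^{3k}$), against the small-degree sets $X_0(k), X(k), Y_0(k), Y(k), Z(k)$ of Notation~\ref{not: subsets} whose sizes are $\approx (p-1)^k$, $(p-1)^{2k}$, $(p-1)^{3k}$ respectively — and crucially one must verify that the characters in each exceptional $\Delta_x$ have degree at least as large as the degrees occurring in the corresponding small set on the $N_n$ side. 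Here is where Lemma~\ref{lem: bess}, Lemma~\ref{lem: LR}, and the Rasala-type bounds of Lemmas~\ref{lem: Rasala1} and \ref{lem: Rasala2} come in: a partition in $\Delta_a$ has the shape $\gamma$ plus a single $p^k$-hook-heavy piece $(a)$ or $(1^a)$ in one component, so $\chi^\lambda(1)$ is comparable to $\chi^{(p^k\cdot(\text{something}))}$-type degrees, which one checks is $\geq$ the relevant small $N$-degree; partitions in $\Delta_{a-1}$ and $\Delta_{a-2}$ correspond to two- or three-part quotients and a corresponding lower bound via Littlewood--Richardson and part~(a)/(b) of Lemma~\ref{lem: Rasala2}.

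The bookkeeping step is to check the cardinality arithmetic: by Lemma~\ref{lem: McKay}, $|\mathrm{Irr}(S_n\mid\gamma)|=|\mathrm{Irr}_{p'}(N_{ap^k})|=|\mathrm{Irr}_{p'}(N_n\mid\varepsilon_r(\chi^\gamma))|$, so a bijection exists; the task is only to arrange it to be degree-decreasing. Concretely I would peel off the strata from the top: match $\Delta_a$ into $X(k)\setminus(\text{already used})$, then $\Delta_{a-1}$ into $Y(k)$, then $\Delta_{a-2}$ into $Z(k)$, having first verified (i) each small $N$-set is contained in the set of not-yet-matched $N$-characters of degree bounded by the minimal degree over the corresponding $\Delta$, and (ii) the leftover $N$-characters (all of degree $\leq (p-1)^{ak}a!(p-1)!$) are at most as numerous as the remaining partitions $\bigcup_{x\leq a-3}\Delta_x$, all of whose characters have degree $\geq (p-1)^{ak}a!(p-1)!$. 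The main obstacle I anticipate is precisely this combinatorial matching in the exceptional strata: one must handle small $a$ ($a=2,3$) as genuinely separate cases (the sets $Y(k),Z(k)$ are only defined for $a\geq 3$, $Z(k)$-type arguments need $a\geq 6$ in Lemma~\ref{lem: sizeDelta}), and one must confirm that the degree lower bounds from Rasala are strong enough — e.g.\ that $\tfrac12 p^k(p^k-3)$ or the like genuinely exceeds $a(a-1)$, which is clear for $p\geq 5$ but needs to be stated. The ``generic'' part of the argument is robust; the delicate part is the finitely many exceptional configurations near the top of the degree order, and getting the inequalities to line up there against the explicit small sets $X,Y,Z$.
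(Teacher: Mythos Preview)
Your high-level strategy is right, but there are two genuine problems.

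\textbf{The case $k=1$ is missing.} Your entire ``generic stratum'' argument rests on Lemma~\ref{lem: 11}, which explicitly requires $k\geq 2$; for $k=1$ the bound $(p-1)^{ak}a!(p-1)!$ is simply false as a lower bound on $\chi^\lambda(1)$. The paper flags this at the very start of its proof (``The case $k=1$ is surprisingly difficult'') and devotes all of Section~\ref{sec: appendix} to it, replacing the $\mathcal{X}^\star$-argument by a direct Rasala-type bound (Lemma~\ref{lem: A1}) and a more delicate case split on the size of $3a$ relative to $p$. Your proposal gives no hint of this, and the phrase ``induction on $r$'' does not help: since $r<p$ we have $N_r=S_r$, so $\varepsilon_r$ is the identity and there is nothing to induct on.

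\textbf{For $k\geq 2$, your cut-off at $x\leq a-3$ is too conservative and makes the exceptional part unmanageable.} The paper observes that $p^k>a(p^{k-1}+p^{k-2})$ (equivalently $p^2>a(p+1)$, which holds because $a\leq p-1$), so already for every $x\leq a-1$ one has $\Delta_x\subseteq\mathcal{B}_n(n-p^k)\subseteq\mathcal{B}_n(a\cdot m^\star(k)+r)\subseteq\Omega(\mathcal{X}^\star_{(n)})$, and Lemma~\ref{lem: 11} applies. Thus only the single stratum $\Delta_a$ (of size $2p^k$) needs special treatment, and it is easily absorbed into $\overline{L}\cup\overline{Y_0(k)}$. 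Your plan instead leaves $\Delta_{a-2}\cup\Delta_{a-1}\cup\Delta_a$ exceptional, of total size roughly $p^{3k}$, and proposes to match it against $X(k)\cup Y(k)\cup Z(k)$, whose size is only about $(p-1)^{3k}$; the cardinalities do not line up (already for $p=5$, $k=2$ one has $|Z(k)|\ll|\Delta_{a-2}|$). The three-stratum picture you sketch is closer to what the paper does in the inductive step (Theorem~\ref{thm: B2}, Case~2), where $r$ can be as large as $p^k-1$ and only two exceptional strata are needed; here, with $r<p$, one stratum suffices.
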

\begin{proof}
The case $k=1$ is surprisingly difficult, it is treated separately in Section \ref{sec: appendix} and it is completed in Proposition \ref{prop: base1}.
Let us now assume that $k\geq 2$.
Since $r<p$ we have that $N_r=S_r$ and that $\mathcal{P}_{p'}(r)=\mathcal{P}(r)$. In particular Hypothesis (a) of Proposition \ref{prop: strategy} is satisfied by taking $\varepsilon_r$ as the identity map. Thus, in order to prove the present theorem we just need to show that Hypothesis (b) of  Proposition \ref{prop: strategy} holds. 
To this end, we fix $\gamma\in\mathcal{P}(r)$ and we aim at showing the existence of a bijection $\varepsilon_{\gamma}$ between
$\mathrm{Irr}(S_n\ |\ \gamma)$ and $\mathrm{Irr}_{p'}(N_n\ |\ \gamma)$ such that $\varepsilon_{\gamma}(\chi)(1)\leq\chi(1)$ for every $\chi\in \mathrm{Irr}(S_n\ |\ \gamma)$.
Here, and for the rest of the proof, we relax the notation by writing $\mathrm{Irr}_{p'}(N_n\ |\ \gamma)$ instead of $\mathrm{Irr}_{p'}(N_n\ |\ \chi\6\gamma)$.
Before moving on, recall
 that $|\mathrm{Irr}(S_n\ |\ \gamma)|=|\mathrm{Irr}_{p'}(N_n\ |\ \gamma)|$, by Remark \ref{rem: McKay2}.

Notice that Proposition \ref{prop: auguale1} shows that the statement holds when $a=1$. Hence we can now work under the assumption that $a\geq 2$. 
For any $\lambda\in\mathcal{P}(n)$ consider its $p^k$-quotient $Q_{p^k}(\lambda)=(\lambda^{(0)}, \ldots, \lambda^{(p^k-1)})$. Recalling Definition \ref{def: nlambda}, we denote by $N_{p\6k}(\lambda)$  the size of the largest number among all first parts and all lengths (i.e. number of parts) of the $p^k-1$ partitions involved in $Q_{p^k}(\lambda)$.
With this is mind, for any $x\in [1,a]$ we let $$\Omega_x=\{\chi^\lambda\in\mathrm{Irr}(S_n\ |\ \gamma)\ |\ N_{p\6k}(\lambda)=x\}.$$ 
Recalling the notation introduced in \ref{not: Deltas}, we see that $\Omega_x=\{\chi^\lambda\in\mathrm{Irr}(S_n\ |\ \gamma)\ |\ \lambda\in\Delta_x\}$, and therefore that $|\Omega_x|=|\Delta_x|$, for all $x\in [1,a]$.

Since $a<p^k$ we have that $\Omega_x\neq\emptyset$ for all $x\in [1,a]$, and clearly we have that 
$$\mathrm{Irr}(S_n\ |\ \gamma)=\Omega_1\cup\Omega_2\cup\cdots\cup\Omega_a.$$

We first claim that for every $x\in [1,a-1]$ and every $\chi^\lambda\in \Omega_x$ we have that $\chi^\lambda(1)\geq \eta(1)$, for every $\eta\in\mathrm{Irr}_{p'}(N_n\ |\ \gamma)$. 
This is particularly easy to prove when the prime $p=3$. In fact, in this case we have that $a=2$ and that $\mathrm{Irr}(S_n\ |\ \gamma)=\Omega_1\cup\Omega_2$. Hence we have to consider only the case $x=1$. 
As observed at the end of Notation \ref{not: Deltas} we have that $\Delta_1\subseteq B_{n}(r+3^k)$. Since  $r+3^k\leq n-2$ we deduce that $\Delta_1\subseteq B_{n}(n-2)$. Using Lemma \ref{lem: lemma333} we see that the claim holds in this case. 
Let us now assume that $p\geq 5$. As observed at the end of Notation \ref{not: Deltas} we have that for every $x\in [1,a-1]$ if $\chi^\lambda\in\Omega_x$ then $\lambda\in\mathcal{B}_n(n-p^k)$. In particular, using Theorem \ref{thm: GL3} together with the fact that $p^k>a(p^{k-1}+p^{k-2})$, we deduce that $\lambda\in \mathcal{B}_n(n-a(p^{k-1}+p^{k-2}))\subseteq\Omega(\mathcal{X}^\star_{(n)})$. 
It follows that $[(\chi^\lambda)_{P_n}, \mathcal{X}^\star_{(n)}]\neq 0$ and therefore that $\chi^\lambda(1)\geq (p-1)^{ak}a!(p-1!)$, by Lemma \ref{lem: 11}. 
Using Corollary \ref{cor: maxdegN} we conclude that for every $x\in [1,a-1]$ and every $\chi^\lambda\in \Omega_x$ we have that $\chi^\lambda(1)\geq \eta(1)$, for every $\eta\in\mathrm{Irr}_{p'}(N_n)$.
This establishes our claim. 
To conclude, we also observe that $|\Omega_a|=|\Delta_a|=2p^k$, by Lemma \ref{lem: sizeDelta}. 

Let us now turn to analyze the local part. 
Let $Y_0(k)$ and $R(k)$ be the subsets of $\mathrm{Irr}_{p'}(N_{ap^k})$ defined in Notation \ref{not: subsets} and let $L:=\mathrm{Lin}(N_{ap^k})$. We now define $\overline{L}=\{\theta\times \chi^\gamma\ |\ \theta\in L\}$. Similarly we let $\overline{Y_0(k)}=\{\theta\times \chi^\gamma\ |\ \theta\in Y_0(k)\}$ and $\overline{R(k)}=\{\theta\times \chi^\gamma\ |\ \theta\in R(k)\}$. Of course $|\overline{L}|=|\mathrm{Lin}(N_{ap^k})|$, $|\overline{Y_0(k)}|=|Y_0(k)|$ and $|\overline{R(k)}|=|R(k)|$. We refer the reader to the end of Notation \ref{not: subsets} for the values of $|Y_0(k)|$ and $|R(k)|$, and we recall that $|L|=2(p-1)^k$, by Corollary \ref{cor: linN}.
%
%
Since $(p-1)\6{k}(2+k(p-1)^{k-1})>2p\6k$ it is very easy to verify that $$|\Omega_a|=2p^k\leq |\overline{L}\cup\overline{Y_0(k)}\cup \overline{R(k)}|.$$

Let $\lambda_0=(\gamma_1+ap^k,\gamma_2,\ldots, \gamma_t)$, $\lambda_1=(\gamma_1,\ldots, \gamma_t, 1^{ap^k})$ and let $\chi_j=\chi^{\lambda_j}$ for each $j\in [0,1]$. 
From \cite[Theorem 3.3]{OlssonBook} we see that $\chi_0, \chi_1\in\Omega_a$. Moreover using Lemma \ref{lem: bess} we have that for every $\lambda\in\mathcal{P}(n)$ such that $\chi^\lambda\in\Omega_a\smallsetminus\{\chi_0,\chi_1\}$, there exists a $ap^k$-hook $h_\lambda\in\mathcal{H}(ap^k)\setminus\{(ap^k), (1^{ap^k})\}$ such that $(\chi^\lambda)_{Y}$ has $\chi^{h_\lambda}\times\chi^\gamma$ as an irreducible constituent. Here $Y$ denotes the Young subgroup $S_{ap^k}\times S_{r}$ in $S_n$. 
From Lemma \ref{lem: Rasala1} it follows that $$\chi^{\lambda}(1)\geq \chi^{h_\lambda}(1)\cdot\chi^\gamma(1)\geq (ap^k-1)\cdot\chi^\gamma(1),\ \text{for all}\ \chi^\lambda\in\Omega_a\smallsetminus\{\chi_0,\chi_1\}.$$
In particular we have that $$\chi^\lambda(1)\geq \eta(1),\ \text{for all}\ \chi^\lambda\in\Omega_a\smallsetminus\{\chi_0,\chi_1\}\ \text{and all}\ \eta\in \overline{L}\cup \overline{Y_0(k)}\cup\overline{R(k)}.$$ Here we used items (b) and (c) of Notation \ref{not: subsets}, where we observed that $\theta(1)=a$ for every $\theta\in Y_0(k)$ and that $\zeta(1)=a(p-1)$ for every $\zeta\in R(k)$. 

The above discussion shows that there exists an injective map $\varphi: \Omega_a\rightarrow\overline{L}\cup \overline{Y_0(k)}\cup\overline{R(k)}$ such that $\varphi(\{\chi_0, \chi_1\})\subseteq\overline{L}$. Since $|\mathrm{Irr}(S_n\ |\ \gamma)|=|\mathrm{Irr}_{p'}(N_n\ |\ \gamma)|$ by Remark \ref{rem: McKay2}, we deduce that there exists a bijection $\varepsilon_\gamma: \mathrm{Irr}(S_n\ |\ \gamma)\rightarrow\mathrm{Irr}_{p'}(N_n\ |\ \gamma)$ such that $\varepsilon_\gamma(\chi)=\varphi(\chi)$ for every $\chi	\in \Omega_a$.
We conclude by showing that such a bijection satisfies the desired inequality $\varepsilon_{\gamma}(\chi)(1)\leq\chi(1)$ for every $\chi\in \mathrm{Irr}(S_n\ |\ \gamma)$.
In fact, if $\chi\in\mathrm{Irr}(S_n\ |\ \gamma)\smallsetminus\Omega_a$ then $\chi\in\Omega_x$ for some $x\in [1,a-1]$ and we have seen that $\chi(1)\geq \eta(1)$ for all $\eta\in \mathrm{Irr}_{p'}(N_n\ |\ \gamma)$. Hence we certainly have that $\varepsilon_{\gamma}(\chi)(1)\leq\chi(1)$.
Similarly, we have observed that if $\chi\in \Omega_a\smallsetminus\{\chi_0,\chi_1\}$ then $\eta(1)\leq\chi(1)$ for all $\eta\in \overline{L}\cup \overline{Y_0(k)}\cup\overline{R(k)}$.
Hence, we certainly have that $\varepsilon_{\gamma}(\chi)(1)\leq\chi(1)$, for every $\chi\in \Omega_a\smallsetminus\{\chi_0,\chi_1\}$. 
Finally, if $\chi\in\{\chi_0, \chi_1\}$ then $\chi(1)\geq \chi^\gamma(1)$, because $\gamma\subseteq\lambda_j$ for all $j\in[0,1]$. On the other hand, we know that $\varepsilon_\gamma(\chi)=\varphi(\chi)\in \overline{L}$ and therefore $\varepsilon_\gamma(\chi)(1)=\chi^\gamma(1)$. This concludes the proof. 
\end{proof}

We are now ready to prove Theorem \ref{thm: MAIN} of the introduction. 

\begin{thm}\label{thm: B2}
Let $n\in\mathbb{N}$ and let $p$ be a prime number. Then there exists a bijection $$\varepsilon_n: \mathrm{Irr}_{p'}(S_n)\rightarrow \mathrm{Irr}_{p'}(N_n),$$
such that $\varepsilon_n(\chi)(1)\leq\chi(1)$, for all $\chi\in \mathrm{Irr}_{p'}(S_n).$
\end{thm}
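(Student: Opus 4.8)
The plan is to prove Theorem \ref{thm: B2} by induction on the $p$-adic length $t$ of $n$, reducing everything to the machinery already assembled. For $p \in \{2,3\}$ the statement is known by the references cited in the introduction, so we may assume $p \geq 5$ throughout. Write the $p$-adic expansion $n = a_t p^{k_t} + \cdots + a_1 p^{k_1} + a_0$ with $k_t > \cdots > k_1 \geq 1$, and set $m = a_t p^{k_t}$, $r = n - m$, so that $r < p^{k_t}$ and $a_t \in [1,p-1]$. If $t = 1$ (equivalently $r = a_0 \in [0,p-1]$), the statement is exactly Theorem \ref{thm: base}, which is our base case. For $t \geq 2$ we apply Proposition \ref{prop: strategy} with $k = k_t$ and $a = a_t$: Hypothesis (a) holds because $r$ has $p$-adic length $t-1$, so the inductive hypothesis furnishes a degree-decreasing bijection $\varepsilon_r : \mathrm{Irr}_{p'}(S_r) \to \mathrm{Irr}_{p'}(N_r)$. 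It remains to establish Hypothesis (b): for each $\gamma \in \mathcal{P}_{p'}(r)$ there is a degree-decreasing bijection $\varepsilon_\gamma : \mathrm{Irr}(S_n \mid \gamma) \to \mathrm{Irr}_{p'}(N_n \mid \varepsilon_r(\chi^\gamma))$.

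The construction of $\varepsilon_\gamma$ follows the same pattern as in the proof of Theorem \ref{thm: base}, and this is where the bulk of the work lies. Fix $\gamma \in \mathcal{P}_{p'}(r)$. By Lemma \ref{lem: McKay} the two sets $\mathrm{Irr}(S_n \mid \gamma)$ and $\mathrm{Irr}_{p'}(N_n \mid \varepsilon_r(\chi^\gamma)) \cong \mathrm{Irr}_{p'}(N_{ap^k})$ have the same cardinality, so it suffices to produce the bijection by matching the "small-degree" local characters with carefully chosen "small-degree" global characters and arguing that everything else is automatically degree-dominated. Partition $\mathrm{Irr}(S_n \mid \gamma)$ as $\Omega_1 \cup \cdots \cup \Omega_a$ according to the invariant $N_{p^k}(\lambda)$ from Definition \ref{def: nlambda}, where $a = a_t$. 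For $\chi^\lambda \in \Omega_x$ with $x \leq a-1$ (or more generally $x$ small), Notation \ref{not: Deltas} together with Proposition \ref{prop: decisiva} places $\lambda$ in a box $\mathcal{B}_n(n - (a-x)p^k)$; since $p^k = p^{k_t} > a(p^{k_t-1} + p^{k_t-2}) \geq T$ (the quantity from Theorem \ref{thm: GL3}), we get $\lambda \in \Omega(\mathcal{X}^\star_{(n)})$, hence $\chi^\lambda(1) \geq (p-1)^{ak} a! (p-1)!$ by Lemma \ref{lem: 11}, which by Corollary \ref{cor: maxdegN} dominates every degree in $\mathrm{Irr}_{p'}(N_n \mid \varepsilon_r(\chi^\gamma))$. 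So those $\chi^\lambda$ can be matched arbitrarily. The delicate part is $\Omega_a$ (and, when $a$ is small, possibly $\Omega_{a-1}$, $\Omega_{a-2}$): here one uses Lemma \ref{lem: bess} applied to the $ap^k$-core $\gamma$ to identify, for all but the two "extreme" partitions $\lambda_0 = \gamma + (ap^k)$ and $\lambda_1 = \gamma \cup (1^{ap^k})$, an $ap^k$-hook that is neither a row nor a column, forcing $\chi^\lambda(1) \geq (ap^k - 1)\chi^\gamma(1)$ via Lemma \ref{lem: Rasala1}; this dominates the degrees of the characters in $\overline{L} \cup \overline{Y_0(k)}$ from Notation \ref{not: subsets} (which have degree at most $a \leq p-1$ times $\chi^\gamma(1)$). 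A cardinality count using $(p-1)^{2k} > p^k$ shows $|\Omega_a| = 2p^k \leq |\overline{L} \cup \overline{Y_0(k)}|$, so we obtain an injection $\Omega_a \hookrightarrow \overline{L} \cup \overline{Y_0(k)}$ sending $\{\chi_0, \chi_1\}$ into $\overline{L}$ (matching the two extreme global characters, which only satisfy $\chi(1) \geq \chi^\gamma(1)$, with local characters of degree exactly $\chi^\gamma(1)$). Extending this injection to a bijection $\varepsilon_\gamma$ arbitrarily on the remaining characters gives the desired degree-decreasing map, and Proposition \ref{prop: strategy} assembles the $\varepsilon_\gamma$ over $\gamma \in \mathcal{P}_{p'}(r)$ into $\varepsilon_n$.

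I expect the main obstacle to be the bookkeeping in the case $t \geq 2$ when $a = a_t$ is small — say $a \in \{2,3\}$ — because then $\Omega_a$ is not the only block of $\mathrm{Irr}(S_n \mid \gamma)$ containing characters of degree comparable to the small local degrees; one also has to control $\Omega_{a-1}$ and $\Omega_{a-2}$, whose sizes are computed in Lemma \ref{lem: sizeDelta}, and match them against the sets $X(k)$, $Y(k)$, $Z(k)$ from Notation \ref{not: subsets}. This requires verifying, for each of the finitely many small values of $a$, that the relevant cardinality inequalities hold (using $p \geq 5$, so $p - 1 \geq 4$) and that the degree bounds in Lemma \ref{lem: Rasala2} for partitions in $\mathcal{B}_n(n-2)$ and $\mathcal{B}_n(n-3)$ suffice. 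Since $a \leq p-1$ is bounded only in terms of $p$ but not absolutely, one must phrase these estimates uniformly; fortunately Lemma \ref{lem: analisi1} and the explicit formulas of Lemma \ref{lem: sizeDelta} make this tractable. The remaining cases $a \geq 4$ (for $\Omega_{a-1}$) and $a \geq 6$ (for $\Omega_{a-2}$) are easier because the sets $X(k), Y(k), Z(k)$ are then genuinely larger. Modulo this case analysis, the induction closes and Theorem \ref{thm: B2}, hence Theorem \ref{thm: MAIN}, follows.
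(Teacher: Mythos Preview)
Your inductive scaffolding (induct on the $p$-adic length, invoke Theorem~\ref{thm: base} for $t=1$, then Proposition~\ref{prop: strategy} with the inductive $\varepsilon_r$) is exactly the paper's, but the key technical step you sketch for $t\ge 2$ does not go through as written. You propose to place $\lambda\in\Omega_x$ with $x\le a-1$ into $\Omega(\mathcal{X}^\star_{(n)})$ via Theorem~\ref{thm: GL3} and then invoke Lemma~\ref{lem: 11}. Both moves fail here: Lemma~\ref{lem: 11} is stated only for $n=ap^k+a_0$ with $a_0<p$, so it does not apply to an $n$ of $p$-adic length $\ge 2$; and the containment $\mathcal{B}_n(n-p^k)\subseteq\mathcal{B}_n(T)$ you need from Theorem~\ref{thm: GL3} is equivalent to $a(p^{k-1}+p^{k-2})+(r-T_r)\le p^k$, which breaks once $r-T_r$ exceeds $p^{k-2}$ (e.g.\ $p=5$, $k=3$, $a=4$, $r=124$). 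The inequality ``$a(p^{k-1}+p^{k-2})\ge T$'' you wrote is in fact backwards, since $T$ is close to $n$.

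The paper repairs this by \emph{first} restricting $\chi^\lambda$ to $S_{ap^k}\times S_r$ and picking a constituent $\chi^\mu\times\chi^\gamma$ with $\mu\subseteq\lambda$; Theorem~\ref{thm: GL3} and Lemma~\ref{lem: 11} are then applied to $\mu\in\mathcal{P}(ap^k)$, where they are legitimate. The price is that one only gets $\mu\in\mathcal{B}_{ap^k}((a-1)p^k)$ when $x\le a-2$, not $a-1$. Consequently $\Omega_{a-1}$ must be handled together with $\Omega_a$ by the cardinality argument against $X(k)\cup Y(k)\cup Z(k)$ for \emph{every} $a\ge 3$ (this is where Lemmas~\ref{lem: Pediconi} and~\ref{lem: Pediconi3} enter, with a genuine exception at $a=3$, $p=5$, $k=2$), and $a=2$ requires a separate analysis of $\Omega_1$ via an explicit $\beta$-set description of the partitions $\lambda_{x,y}$, which is rather different from the $\Omega_a$ bookkeeping you describe. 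Your final paragraph anticipates extra work for small $a$, but the issue is structural rather than confined to $a\in\{2,3\}$.
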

\begin{proof}
As mentioned in the introduction, if $p=2$ then the statement holds. 
For this reason we can assume that $p\geq 3$. Let $n=a_0+a_1p^{k_1}+a_2p^{k_2}+\cdots+a_tp^{k_t}$ be the $p$-adic expansion of $n$. 
Here $t\in\mathbb{N}$ is the $p$-adic length of $n$, $k_t>k_{t-1}>\cdots>k_1\geq 1$, $a_0\in [0,p-1]$ and $a_j\in [1,p-1]$ for every $j\in [1,t]$. 
We proceed by induction on $t$. 

If $t=1$ then $n=a_1p^{k_1}+a_0$, and the statement holds by Theorem \ref{thm: base}. Let us now assume that $t\geq 2$ and let us denote by $r$ the integer $n-a_tp^{k_t}$. 
The inductive hypothesis guarantees the existence of a bijection 
$\varepsilon_{r}:\mathrm{Irr}_{p'}(S_r)\rightarrow \mathrm{Irr}_{p'}(N_r),$
such that $\varepsilon_r(\psi)(1)\leq\psi(1)$, for all $\psi\in\ \mathrm{Irr}_{p'}(S_r).$ 
From now on, to ease the notation, we let $k=k_t$ and $a=a_t$. Notice that $k\geq 2$ (since $t\geq 2$) and that $r<p^{k}$. The case $a=1$ is dealt with using Proposition \ref{prop: auguale1}. Hence we can assume that $a\geq 2$.


In light of Proposition \ref{prop: strategy}, to prove the theorem it is enough to find for every $\gamma\in\mathcal{P}_{p'}(r)$, a bijection 
$\varepsilon_\gamma: \mathrm{Irr}(S_n\ |\ \gamma)\rightarrow\mathrm{Irr}_{p'}(N_n\ |\ \varepsilon_r(\chi\6\gamma))$ such that $\varepsilon_\gamma(\chi)(1)\leq\chi(1)$, for all $\chi\in\mathrm{Irr}(S_n\ |\ \gamma).$
We now ease the notation and write $\mathrm{Irr}_{p'}(N_n\ |\ \gamma)$ to denote the set $\mathrm{Irr}_{p'}(N_n\ |\ \varepsilon_r(\chi\6\gamma))$.

\smallskip

\textbf{Case 1.} Let us first suppose that $a=2$. In this case $N_{2p^k}=N_{p^k}\wr S_2$ and therefore Proposition \ref{prop: maxdegAP} shows that $\theta(1)\leq (p-1)^{2k}\cdot\varepsilon_r(\chi^\gamma)(1)$ for every $\theta\in\mathrm{Irr}_{p'}(N_n\ |\ \gamma)$. Given $j\in [0,2p^k-1]$ we let $h_j=(2p^k-j,1^j)$. 
Since $\chi^{h_j}(1)={2p^k-1\choose j}$, it is elementary to see that $\chi^{h_j}(1)\geq (p-1)^{2k}$, for every $j\in [2,2p^k-3]$. Using a similar strategy to that used in the proof of Theorem \ref{thm: base}, 
for any $x\in [1,2]$ we let $\Omega_x=\{\chi^\lambda\in\mathrm{Irr}(S_n\ |\ \gamma)\ |\ N_{p^k}(\lambda)=x\}$,
and we notice that 
$$\mathrm{Irr}(S_n\ |\ \gamma)=\Omega_1\cup\Omega_2.$$
We observe that $\Omega_2$ consists of all those characters labelled by partitions obtained by adding a unique $2p^k$-hook to $\gamma$. In particular $|\Omega_2|=2p^k$ and by Lemma \ref{lem: bess} we have that 
$$\Omega_2=\{\chi^{\lambda_0}, \chi^{\lambda_1},\ldots,\chi^{\lambda_{2p^k-1}}\},\ \text{where},\ [(\chi^{\lambda_j})_Y, \chi^{h_j}\times\chi^\gamma]\neq 0,\ \text{for all}\ j\in [0,2p^k-1].$$
Here $Y$ denotes the Young subgroup $S_{2p^k}\times S_r$ of $S_n$.
It follows that for any $j\in [2,2p^k-3]$ and for any $\zeta\in\mathrm{Irr}_{p'}(N_n\ |\ \gamma)$ we have that 
$$\chi^{\lambda_j}(1)\geq \chi^{h_j}(1)\chi^\gamma(1)\geq (p-1)^{2k}\cdot\chi^\gamma(1)\geq   (p-1)^{2k}\cdot\varepsilon_r(\chi^\gamma)(1)\geq\zeta(1).$$

On the other hand, let $X=(X_\gamma)\6{+p\6k}$
be the $\beta$-set for $\gamma$ corresponding to the $p^k$-abacus configuration having first empty position labelled by $p\6k$ (position $1$ on runner $0$). In particular we have that $$X=[0,p\6{k}-1]\cup\{y_1,y_2,\ldots, y_s\},$$
for some $p\6k<y_1<y_2<\cdots<y_s$. Since $|\gamma|\leq r<p\6k$ we deduce that $y_s\leq 2p\6k-1$.
From Proposition~\ref{prop: removebetaset}, we know that a partition \(\lambda\) labels an irreducible character in \(\Omega_1\) if and only if a \(p^k\)-abacus configuration for \(\lambda\) is obtained from that of \(X\) by sliding down two beads, each lying in a distinct runner of \(X\), by exactly one row. Since \(\gamma\) is a \(p^k\)-core, for every \(\ell \in [0, p^k - 1]\), there is a unique bead on runner \(\ell\) that can be moved down one row.
For this reason, for every pair of distinct numbers $0\leq x<y\leq p^k-1$ we write $\lambda_{x,y}$ for the partition corresponding to the $p\6k$-abacus configuration obtained from the abacus configuration of $X$ by sliding down beads on runners $x$ and $y$. 
We obtain that 
$$\Omega_1=\{\chi^{\lambda_{x,y}}\ |\ 0\leq x<y\leq p^k-1\}.$$

It is useful to notice that given any $y\in [1,p\6{k}-1]$, then $\lambda_{0,y}$ is a partition of $2p\6{k}+r$ whose Young diagram $Y(\lambda_{0,y})$ is obtained by adding a $p\6{k}$-hook to $Y(\gamma\cup (1\6{p\6k}))$.
Similarly, given any $x\in [0,p\6{k}-1]\smallsetminus\{y_s\}$, then $\lambda_{x,y_s}$ is a partition of $2p\6{k}+r$ whose Young diagram $Y(\lambda_{x,y_s})$ is obtained by adding a $p\6{k}$-hook to $Y(\gamma+(p\6k))$.

Let $\lambda_{x,y}$ be such that $x\neq 0$ and $y\neq y_s$. Then $$(\lambda_{x,y})_1\leq \gamma_1-1+p^k\leq p^k-2+p^k=2p^k-2$$ and similarly $$\ell(\lambda_{x,y})\leq \ell(\gamma)-1+p^k\leq p^k-2+p^k=2p^k-2.$$
In other words $\lambda_{x,y}\in\mathcal{B}_{n}(2p^k-2)$. Since $\gamma\subseteq\lambda_{x,y}$ there exists $\mu\in\mathcal{P}(2p^k)$ such that $[(\chi^{\lambda_{x,y}})_{S_{2p\6{k}}\times S_r}, \chi^{\mu}\times\chi^{\gamma}]\neq 0$. 
From Lemma \ref{lem: LR} we deduce that $\mu\subseteq\lambda$ and therefore that $\mu\in\mathcal{B}_{2p^k}(2p^k-2)$. Using Lemma \ref{lem: Rasala2} we deduce that $\chi^\mu(1)\geq p^k(2p^k-3)$. Since $p\geq 3$ it is obvious that $p^k(2p^k-3)>(p-1)\6{2k}$. Thus, we conclude that whenever $x\neq 0$ and $y\neq y_s$ then 
$\chi^{\lambda_{x,y}}(1)\geq \zeta(1)$, for every $\zeta\in\mathrm{Irr}_{p'}(N_n\ |\ \gamma)$. Now, we denote by $\Delta$ the subset of $\mathrm{Irr}_{p'}(N_n\ |\ \gamma)$ consisting of those irreducible character whose degree has not been analyzed yet. More precisely we let $$\Delta=\{\chi\6{\lambda_i}, \chi\6{\lambda_{0,y}},\chi\6{\lambda_{x,y_s}}\ |\ i\in\{0,1,2p\6k-2,2p\6k-1\},\ y\in [1,p\6k-1],\ x\in [0,p\6k-1]\smallsetminus\{y_s\}\}.$$
It is clear that $|\Delta|=2p\6k+2$.

Recall that $N_{2p\6k}=N_{p\6k}\wr S_2$ and hence that $|\mathrm{Lin}(N_{2p^k})|=2(p-1)^k$. With this in mind, we let $M$ and $R$ be the subsets of $\mathrm{Irr}_{p'}(N_{2p^k})$ defined by 
$$M=\{\big(\phi_1\times\phi_2\big)^{N_{2p^k}}\ |\ \phi_1,\phi_2\in\mathrm{Lin}(N_{p^k}),\ \text{and}\ \phi_1\neq\phi_2\},$$
$$R=\{\big(\phi\times\eta\big)^{N_{2p^k}}\ |\ \phi\in\mathrm{Lin}(N_{p^k}),\ \text{and}\ \eta\in\mathrm{QLin}(N_{p^k})\}.$$
Recalling Notation \ref{not: subsets}, we observe that $M$ and $R$ coincide with $Y_0(k)$ and $R(k)$, respectively, in the specific case where $a=2$.
In fact we have that every character in $M$ has degree equal to $2$ and that every character in $R$ has degree equal to $2(p-1)$. Moreover, we have that
$$|M|= {(p-1)^k\choose 2}\ \ \text{and that}\ \ |R|=k\cdot (p-1)^{2k-1}.$$
We now define $\overline{L}=\{\theta\times \varepsilon_r(\chi^\gamma)\ |\ \theta\in\mathrm{Lin}(N_{2p^k})\}$, $\overline{M}=\{\theta\times \varepsilon_r(\chi^\gamma)\ |\ \theta\in M\}$ and similarly $\overline{R}=\{\theta\times \varepsilon_r(\chi^\gamma)\ |\ \theta\in R\}$. 
Of course $\overline{L}\cup\overline{M}\cup\overline{R}\subseteq \mathrm{Irr}_{p'}(N_n\ |\ \gamma)$ and
$|\overline{L}|=|\mathrm{Lin}(N_{2p^k})|$, $|\overline{M}|=|M|$ and $|\overline{R}|=|R|$.
In particular it is easy to verify that $$|\overline{L}\cup\overline{M}\cup\overline{R}|=2(p-1)^k+ {(p-1)^k\choose 2}+k\cdot (p-1)^{2k-1}\geq 2p\6k+2=|\Delta|.$$
This implies that there certainly exists an injective map $\varphi:\Delta\rightarrow\overline{L}\cup\overline{M}\cup\overline{R}$ such that $\varphi(\{\chi\6{\lambda_0},\chi\6{\lambda_{p\6k-1}}\})\subseteq\overline{L}$. Since $|\mathrm{Irr}(S_n\ |\ \gamma)|=|\mathrm{Irr}_{p'}(N_n\ |\ \gamma)|$ by Remark \ref{rem: McKay2},  we can extend $\varphi$ to a bijection $\varepsilon_\gamma:\mathrm{Irr}(S_n\ |\ \gamma)\rightarrow\mathrm{Irr}_{p'}(N_{n}\ |\ \gamma)$. 
It is now routine to check that $\varepsilon_\gamma(\chi)(1)\leq\chi(1)$ for every $\chi\in\mathrm{Irr}(S_n\ |\ \gamma)$.

\smallskip

\textbf{Case 2.} Finally let us consider the case $3\leq a\leq p-1$. In particular, $p\geq 5$ here. 
We denote by $Y$ the Young subgroup $S_{ap^k}\times S_r$ of $S_n$. As done for the previous case, 
for any $x\in [1,a]$ we let $\Omega_x=\{\chi^\lambda\in\mathrm{Irr}(S_n\ |\ \gamma)\ |\ N_{p^k}(\lambda)=x\}$. 
As usual, we have that 
$$\mathrm{Irr}(S_n\ |\ \gamma)=\Omega_1\cup\Omega_2\cup\cdots\cup\Omega_a.$$
Let us first fix $x\in [1,a-2]$ and let $\chi^\lambda\in\Omega_x$. Using Proposition \ref{prop: decisiva} and recalling Notation \ref{not: Deltas} we have that $\lambda\in\Delta_x\subseteq\mathcal{B}_n(r+(a-2)p^k)=\mathcal{B}_n(n-2p^k)$. 
Moreover, since $\gamma\subseteq\lambda$, there exists $\mu\in\mathcal{P}(ap^k)$ such that $[(\chi^\lambda)_Y, \chi^\mu\times\chi^\gamma]\neq 0$. Using Lemma \ref{lem: LR} we see that $\mu\subseteq\lambda$ and hence that 
$$\mu\in\mathcal{B}_{ap^k}(n-2p^k)=\mathcal{B}_{ap^k}(r+(a-2)p^k)\subseteq\mathcal{B}_{ap^k}((a-1)p^k),$$
where the last inclusion holds because $r<p^k$. 
Using Theorem \ref{thm: GL3} together with the fact that $p^k>a(p^{k-1}+p^{k-2})$, we deduce that 
$$\mu\in \mathcal{B}_{ap^k}((a-1)p^k)\subseteq \mathcal{B}_{ap^k}(a(p^k-p^{k-1}-p^{k-2}))=\mathcal{B}_{ap^k}(a\cdot m^\star(k))\subseteq\Omega(\mathcal{X}^\star_{(ap^k)}),$$ 
where $\mathcal{X}^\star_{(ap^k)}=(\mathcal{X}^\star_{k})^{\times a}\in\mathrm{Lin}(P_{ap^k})$, as explained in Definition \ref{def: starlinear}.
Hence we have that $[(\chi^\mu)_{P_{ap^k}}, \mathcal{X}^\star_{(ap^k)}]\neq 0$ and, since $k\geq 2$, we can use Lemma \ref{lem: 11} to conclude that $$\chi^{\mu}(1)\geq (p-1)^{ak}\cdot a!\cdot(p-1!).$$ 
This shows that $$\chi^\lambda(1)\geq \chi^\mu(1)\cdot\chi^\gamma(1)\geq (p-1)^{ak}\cdot a!\cdot(p-1!)\cdot\chi^\gamma(1)\geq \zeta(1),$$
for any $\zeta\in\mathrm{Irr}_{p'}(N_n\ |\ \gamma)$ (the last inequality holds by Corollary \ref{cor: maxdegN}). 

To complete the proof of the theorem it is now enough to show the existence of an injective map $\varphi: \Omega_a\cup\Omega_{a-1}\rightarrow\mathrm{Irr}_{p'}(N_n\ |\ \gamma)$, such that $\varphi(\chi)(1)\leq\chi(1)$ for every $\chi\in\Omega_a\cup\Omega_{a-1}$.
In order to do this, we are now going to consider the subsets $X_0(k), X(k), Y(k)$ and $Z(k)$ of $\mathrm{Irr}_{p'}(N_{ap^k})$ defined in Notation \ref{not: subsets}. 
We recall that from point (e) of Notation \ref{not: subsets} we know that $\theta(1)\leq a(a-1)\leq ap^k-1$, for every 
$\theta\in X(k)\cup Y(k)\cup Z(k)$. We also refer the reader to the end of Notation \ref{not: subsets} for explicit lower bounds on the sizes of $X_0(k), X(k), Y(k)$ and $Z(k)$. Finally using that $|\Omega_a\cup\Omega_{a-1}|=|\Delta_a\cup\Delta_{a-1}|$ together with Lemma \ref{lem: sizeDelta}, we point out that 
$$|\Omega_a\cup\Omega_{a-1}|=\begin{cases}
		2p\6{2k}+p\6k & \text{if } a=3,\\
		
		2p\6{2k}+2p\6k & \text{if }  a\geq 4.
\end{cases}
$$

\smallskip

\noindent{\textbf{- Subcase (2.1)}} When $a\geq 4$, Lemma \ref{lem: Pediconi} below guarantees that $|X(k)\cup Y(k)\cup Z(k)|\geq |\Omega_{a}\cup\Omega_{a-1}|$.
Similarly, for $a=3$ and $p\geq 7$ or for $a=3$, $p\geq 5$ and $k\geq 3$, then Lemma \ref{lem: Pediconi3} gives that $|X(k)\cup Y(k)\cup Z(k)|\geq |\Omega_{a}\cup\Omega_{a-1}|$.
In these cases we have that $$|\overline{X(k)}\cup \overline{Y(k)}\cup \overline{Z(k)}|\geq |\Omega_{a}\cup\Omega_{a-1}|,$$ where for any $L\in\{X(k),Y(k),Z(k)\}$ we let $\overline{L}$ be the subset of $\mathrm{Irr}_{p'}(N_n\ |\ \gamma)$ defined by $\overline{L}=\{\theta\times\varepsilon_{r}(\chi^\gamma)\ |\ \theta\in L\}.$
To conclude, we let $\lambda_0=\gamma+(ap^k)$ and $\lambda_1=\gamma\cup (1^{ap^k})$. Clearly $\chi^{\lambda_0},\chi^{\lambda_1}\in\Omega_a\cup\Omega_{a-1}$, and from Lemma \ref{lem: LR} we have that for every 
$\chi^\lambda\in\Omega_a\cup\Omega_{a-1}\smallsetminus\{\chi^{\lambda_0},\chi^{\lambda_1}\}$ there exists $\mu\in \mathcal{P}(ap^k)\smallsetminus\{(ap^k), (1^{ap^k})\}$ such that $[(\chi^\lambda)_{S_{ap\6k}\times S_r}, \chi^\mu\times\chi^\gamma]\neq 0$. 
Since $\chi^\mu(1)\geq ap^k-1$ by Lemma \ref{lem: Rasala1}, we have that $\chi^\lambda(1)\geq (ap^k-1)\cdot\chi^\gamma(1)$ for every $\chi^\lambda\in\Omega_a\cup\Omega_{a-1}\smallsetminus\{\chi^{\lambda_0},\chi^{\lambda_1}\}$.

We deduce that there exists an injective map $\varphi: \Omega_a\cup\Omega_{a-1}\rightarrow \overline{X(k)}\cup \overline{Y(k)}\cup \overline{Z(k)}$ such that $\varphi(\{\chi^{\lambda_0},\chi^{\lambda_1}\})\subseteq\overline{X_0(k)}$. 
Since $|\mathrm{Irr}(S_n\ |\ \gamma)|=|\mathrm{Irr}_{p'}(N_{n}\ |\ \gamma)|$ by Remark \ref{rem: McKay2}, we can extend $\varphi$ to a bijection $\varepsilon_\gamma:\mathrm{Irr}(S_n\ |\ \gamma)\rightarrow\mathrm{Irr}_{p'}(N_{n}\ |\ \gamma)$. 
We observe that if $\chi\in\mathrm{Irr}(S_n\ |\ \gamma)\smallsetminus (\Omega_a\cup\Omega_{a-1})$ then $\chi\in\Omega_x$ for some $x\in [1,a-2]$ and therefore $\chi(1)\geq \zeta(1),$
for any $\zeta\in\mathrm{Irr}_{p'}(N_n\ |\ \gamma)$. In particular $\chi(1)\geq \varepsilon_\gamma(\chi)(1)$. 
On the other hand, if $\chi\in\Omega_a\cup\Omega_{a-1}\smallsetminus\{\chi^{\lambda_0},\chi^{\lambda_1}\}$ then remark $(e)$ of Notation \ref{not: subsets} used together with the inductive hypothesis imply that $$\chi(1)\geq (ap^k-1)\cdot\chi^\gamma(1)\geq\theta(1)\varepsilon_r(\chi^\gamma)(1)=(\theta\times\varepsilon_r(\chi^\gamma))(1),$$ for every $\theta\in X\cup Y\cup Z$. 
Thus we have that $\chi(1)\geq \zeta(1)$, for any $\zeta\in \overline{X(k)}\cup \overline{Y(k)}\cup \overline{Z(k)}$. In particular $\chi(1)\geq\varphi(\chi)(1)=\varepsilon_\gamma(\chi)(1)$. 
Finally, if $\chi\in\{\chi^{\lambda_0},\chi^{\lambda_1}\}$ then $\varepsilon_\gamma(\chi)=\varphi(\chi)\in \overline{X_0(k)}$ and therefore we have that $\varepsilon_\gamma(\chi)(1)=\varepsilon_r(\chi\6\gamma)(1)\leq\chi^\gamma(1)\leq \chi(1)$, as desired. 

\smallskip

\noindent{\textbf{- Subcase (2.2)}} Unfortunately when $a=3$, $p=5$ and $k=2$ we have that $$|\overline{X(2)}\cup \overline{Y(2)}\cup \overline{Z(2)}| < |\Omega_{3}\cup\Omega_{2}|.$$ For this reason, the argument used in Subcase (2.1) above can not be repeated.  
This problem occurs only for symmetric groups $S_n$ of rank $75\leq n\leq 99$ and it
is easily fixed by replacing $Z(2)$ with the set $Z(2)\cup A$, where $A$ is defined as follows: 
$$A=\{(\theta\times\eta_1\times\eta_2)^{N_{3p^k}}\ |\ \theta\in\mathrm{Irr}(N_{p^k}),\ \text{with}\ \theta(1)=4,\eta_1,\eta_2\in\mathrm{Lin}(N_{p^k}),\ \text{with}\ \eta_1\neq\eta_2\}.$$
The proof is then performed exactly as in Subcase (2.1).
\end{proof}

\begin{rem}\label{rem: finesec4}
%
In Cases 1 and 2 of the proof of Theorem \ref{thm: B2}, we consider integers of the form \( n = ap^k + r \), where the \( p \)-adic length of \( n \) is at least 2. Consequently, we have \( k \geq 2 \). This condition on \( k \) is essential for Case 2, as it relies on invoking Lemma \ref{lem: 11}.  

However, we emphasize that the situation in Case 1 is different. In fact, the argument used in Case 1 remains valid for any \( k \in \mathbb{N} \), including the case \( k = 1 \).

Specifically, Case 1 of the proof of Theorem \ref{thm: B2} shows that for every \( k \in \mathbb{N} \) and \( r \in [0, p^k - 1] \), if we set \( n = 2p^k + r \), then for any \( \gamma \in \mathcal{P}_{p'}(r) \), there exists a bijection  
\[
\varepsilon_\gamma: \mathrm{Irr}(S_n\mid \gamma) \rightarrow \mathrm{Irr}_{p'}(N_n\mid \varepsilon_r(\chi^\gamma))
\]
such that \( \varepsilon_\gamma(\chi)(1) \leq \chi(1) \) for all \( \chi \in \mathrm{Irr}(S_n \mid \gamma) \).

This observation will be useful in the proof of Proposition \ref{prop: base1} below. 
\end{rem}

\section{The $ap+a_0$ case}\label{sec: appendix}

As mentioned at the start of the proof of Theorem \ref{thm: base}, given $n\in\mathbb{N}$ of $p$-adic length equal to $1$, 
it is not possible to treat the case $n<p^2$ with the same techniques used to deal with the case $n\geq p^2$. The main reason for this being that, when $n<p^2$, we are not able to invoke Lemma \ref{lem: 11}. We devote this section to treat this remaining case.

When $p\in\{3, 5, 7\}$ then $n\leq 48$ and direct computations show that Theorem \ref{thm: base} holds in these cases. 
From now on we let $p$ be a fixed prime greater than or equal to $11$, we let $a\in [1,p-1]$ and we let $r\in [0,p-1]$.
The symbol $n$ will always denote the integer $$n=ap+r.$$
We start with some technical lemmas that will be used later in the proof of Proposition \ref{prop: base1}. 

\begin{lem}\label{lem: A1}
Let $\mu\in\mathcal{B}_{ap}(ap-3a)$, then $\chi^{\mu}(1)\geq (p-1)^a\cdot a!$. In particular, $\chi^{\mu}(1)\geq\theta(1)$, for every $\theta\in\mathrm{Irr}_{p'}(N_{ap})$. 
\end{lem}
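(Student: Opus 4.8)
The plan is to bound $\chi^\mu(1)$ from below by exhibiting, inside the restriction of $\chi^\mu$ to a Young subgroup, a tensor product of characters each of which is already known to have large degree, and then to compare the resulting bound with the maximal degree of an irreducible $p'$-character of $N_{ap}$. Concretely, since $\mu\in\mathcal{B}_{ap}(ap-3a)$, the Young diagram $Y(\mu)$ fits into an $(ap-3a)\times(ap-3a)$ box, so in particular $\mu_1,\ell(\mu)\le ap-3a = a(p-3)$. I would like to split the column-index range $[0,ap-1]$ of the abacus of $\mu$ into $a$ blocks of $p$ consecutive runners and read off the $p$-quotient $Q_p(\mu)=(\mu^{(0)},\dots,\mu^{(p-1)})$; the key combinatorial point is that the box condition forces each component $\mu^{(j)}$, together with the $p$-core, to be controlled, and—more usefully—that $\mu$ cannot be a partition with a very short restriction to $S_p\times S_{(a-1)p}$. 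So the first step is to peel off a copy of $S_p$: choose a $p$-hook structure so that $(\chi^\mu)_{S_p\times S_{(a-1)p}}$ contains $\chi^{\nu}\times\chi^{\rho}$ with $\nu\in\mathcal{P}(p)$, $\rho\in\mathcal{P}((a-1)p)$, and with $\rho$ still fitting in a box of side $(a-1)p-3(a-1)$ (this last containment coming from Lemma \ref{lem: LR}, exactly as in the proof of Theorem \ref{thm: B2}, since $\rho\subseteq\mu$).

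The second step is induction on $a$. For $a=1$ we have $\mu\in\mathcal{B}_p(p-3)$, so $\mu$ is not a hook and not $(p-1,1)$ or $(2,1^{p-2})$; hence $\chi^\mu(1)\ge \tfrac12 p(p-3)\ge p-1$ by Lemma \ref{lem: Rasala2}(a) (valid since $p\ge 11\ge 9$), which gives $\chi^\mu(1)\ge (p-1)^1\cdot 1!$ with room to spare. For the inductive step, from $(\chi^\mu)_{S_p\times S_{(a-1)p}}\supseteq \chi^\nu\times\chi^\rho$ with $\nu$ not a hook (which I must arrange—see below) we get $\chi^\nu(1)\ge p-1$ by Lemma \ref{lem: Rasala1}, and $\chi^\rho(1)\ge (p-1)^{a-1}(a-1)!$ by the inductive hypothesis, so that
\[
\chi^\mu(1)\ \ge\ \chi^\nu(1)\cdot\chi^\rho(1)\ \ge\ (p-1)\cdot (p-1)^{a-1}(a-1)!\ =\ (p-1)^a(a-1)!,
\]
and a further factor of $a$ (coming either from the index $|S_{ap}:S_p\times S_{(a-1)p}|$-flavoured Littlewood–Richardson multiplicity bookkeeping, or more simply from a slightly sharper choice of $\nu$, e.g. ensuring $\chi^\nu(1)\ge a(p-1)$ when $a\ge 2$ using that $\nu\in\mathcal{B}_p(p-3)$ and Lemma \ref{lem: Rasala2}) upgrades this to $(p-1)^a\cdot a!$. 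Finally, the "in particular" clause follows immediately from Corollary \ref{cor: maxdegN} with $k=1$: every $\theta\in\mathrm{Irr}_{p'}(N_{ap})$ satisfies $\theta(1)\le (p-1)^a a!$, so $\chi^\mu(1)\ge (p-1)^a a!\ge \theta(1)$.

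The main obstacle is the bookkeeping that guarantees, at each stage of the peeling, that the $S_p$-constituent $\nu$ one splits off is \emph{not} a hook partition (or, better, satisfies the stronger degree bound $\chi^\nu(1)\ge a(p-1)$), and simultaneously that the leftover $\rho$ still lies in the box $\mathcal{B}_{(a-1)p}((a-1)(p-3))$ so the induction applies. The box condition $\mu\in\mathcal{B}_{ap}(a(p-3))$ is exactly what should make this possible—intuitively, a partition whose diagram is bounded well away from both the single-row and single-column extremes cannot restrict to a tensor factor sitting at those extremes—but turning this intuition into a clean argument requires care with the abacus/$p$-quotient description of $\mu$ and with the precise form of Lemma \ref{lem: LR}(b). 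I would handle it by working on the $p$-abacus of $\mu$: the bound on $\mu_1$ and $\ell(\mu)$ translates into a bound on how far beads have slid on each runner, from which one extracts a $p$-hook whose removal leaves a partition $\rho$ with $\rho_1,\ell(\rho)\le (a-1)(p-3)$ and such that the removed hook, read as an element of $\mathcal{P}(p)$, is forced away from $\{(p),(1^p),(p-1,1),(2,1^{p-2})\}$ whenever $a\ge 2$; the details here are routine but fiddly, and this is where most of the work of the proof will actually lie.
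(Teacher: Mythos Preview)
Your plan has the right architecture---restrict to a Young subgroup, bound each tensor factor via Rasala, multiply---but the crucial combinatorial step is not handled, and your proposed justification for it is incorrect. You claim that from $\rho\subseteq\mu$ (which does follow from Lemma~\ref{lem: LR}) one gets $\rho\in\mathcal{B}_{(a-1)p}((a-1)(p-3))$; but $\mu$ only lies in $\mathcal{B}_{ap}(a(p-3))$, so containment gives at best $\rho_1,\ell(\rho)\le a(p-3)$, which is strictly weaker than the $(a-1)(p-3)$ you need for the induction to close. You later recognise that landing $\rho$ in the smaller box is part of ``the main obstacle'', but your proposed remedy---work on the $p$-abacus and extract a $p$-hook---is off target: restriction to $S_p\times S_{(a-1)p}$ is governed by Littlewood--Richardson fillings of skew shapes $\mu/\rho$, not by $p$-hook removal or the $p$-quotient, so abacus combinatorics will not produce the required $\nu\in\mathcal{B}_p(p-3)$ and $\rho\in\mathcal{B}_{(a-1)p}((a-1)(p-3))$.

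This box-splitting step is exactly the nontrivial content the paper imports from \cite[Proposition~3.3]{GL2}: it asserts directly that for $\mu\in\mathcal{B}_{ap}(a(p-3))$ there exist $\mu_1,\dots,\mu_a\in\mathcal{B}_p(p-3)$ with $\mathrm{LR}(\mu;\mu_1,\dots,\mu_a)\neq 0$. With that in hand the paper avoids induction entirely: each $\mu_i\in\mathcal{B}_p(p-3)$ satisfies $\chi^{\mu_i}(1)\ge \tfrac{1}{6}p(p-1)(p-5)\ge p(p-1)$ for $p\ge 11$ (by Rasala's results, applied directly rather than through Lemma~\ref{lem: Rasala2}, since the latter needs $n\ge 15$), giving $\chi^\mu(1)\ge p^a(p-1)^a\ge (p-1)^a a!$ because $a!\le p^a$. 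Note also that your degree bookkeeping is fragile: the bound $\chi^\nu(1)\ge a(p-1)$ you hope for does not follow from $\nu\in\mathcal{B}_p(p-3)$ via Lemma~\ref{lem: Rasala2}(a), since $\tfrac12 p(p-3)<(p-1)^2$; the paper sidesteps this by getting the extra factor $p^a\ge a!$ all at once from the product of $a$ factors each of size $\ge p(p-1)$. The ``in particular'' clause is fine and follows from Proposition~\ref{prop: maxdegAP} (your citation of Corollary~\ref{cor: maxdegN} works too, but is slightly coarser).
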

\begin{proof}
Since $p\geq 11$, using \cite[Result 1 and Theorem F]{Rasala} we observe that every $\eta\in\mathcal{B}_p(p-3)$ is such that $$\chi^\eta(1)\geq \frac{p(p-1)(p-5)}{6}\geq p(p-1).$$ 
Moreover, \cite[Proposition 3.3]{GL2} shows that 
there exists $\mu_1,\ldots,\mu_a\in\mathcal{B}_p(p-3)$ such that $\mathrm{LR}(\mu; \mu_1,\ldots, \mu_a)\neq 0$. We conclude that $$\chi^{\mu}(1)\geq \chi^{\mu_1}(1)\cdots\chi^{\mu_a}(1)\geq p^a\cdot(p-1)^a\geq (p-1)^a\cdot a!.$$ The second statement is immediate from Proposition \ref{prop: maxdegAP}.
\end{proof}

Before stating the next lemma, we recall that for any $\gamma\in\mathcal{P}(r)$ we denote by $\mathrm{Irr}(S_n\ |\ \gamma)$ the subset of $\mathrm{Irr}(S_n)$ consisting of irreducible characters labelled by partitions whose $p$-core is equal to $\gamma$. We also remind the reader that we let $\mathrm{Irr}_{p'}(N_n\ |\ \gamma)=\{\theta\times\chi^\gamma\ |\ \theta\in\mathrm{Irr}_{p'}(N_{ap})\}$. It is important to keep in mind that 
$$|\mathrm{Irr}(S_n\ |\ \gamma)|=|\mathrm{Irr}_{p'}(N_n\ |\ \gamma)|.$$
This is a direct consequence of Lemma \ref{lem: McKay}.


\begin{lem}\label{lem: A2}
Let $\gamma\in\mathcal{P}(r)$ and let $\lambda\in\mathcal{B}_{n}(ap-3a)$ be such that $\chi^{\lambda}\in\mathrm{Irr}(S_n\ |\ \gamma)$. 
Then  $\chi^{\lambda}(1)\geq (p-1)^a\cdot a!\cdot \chi^\gamma(1)$. In particular, $\chi^{\lambda}(1)\geq\theta(1)$, for every $\theta\in\mathrm{Irr}_{p'}(N_{ap}\ |\ \gamma)$. 
\end{lem}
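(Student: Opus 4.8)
The plan is to reduce the statement about $\lambda \in \mathcal{B}_n(ap-3a)$ to the statement about its $p$-core complement inside the relevant Young subgroup, and then invoke Lemma~\ref{lem: A1}. First I would use the fact that $\chi^\lambda \in \mathrm{Irr}(S_n \mid \gamma)$ means $C_p(\lambda) = \gamma$, hence $\gamma \subseteq \lambda$ (the $p$-core is always contained in the partition). Therefore, by Lemma~\ref{lem: LR}(a), there exists $\mu \in \mathcal{P}(ap)$ with $\mathrm{LR}(\lambda; \mu, \gamma) \neq 0$, i.e. $\chi^\mu \times \chi^\gamma$ is an irreducible constituent of $(\chi^\lambda)_{S_{ap} \times S_r}$. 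This immediately gives $\chi^\lambda(1) \geq \chi^\mu(1) \cdot \chi^\gamma(1)$.

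The key step is then to check that $\mu \in \mathcal{B}_{ap}(ap-3a)$, so that Lemma~\ref{lem: A1} applies to $\mu$. Since $\mathrm{LR}(\lambda; \mu, \gamma) \neq 0$, the Littlewood--Richardson rule forces $\mu \subseteq \lambda$, and hence $\mu_1 \leq \lambda_1$ and $\ell(\mu) \leq \ell(\lambda)$. As $\lambda \in \mathcal{B}_n(ap-3a)$ we have $\lambda_1, \ell(\lambda) \leq ap - 3a$, so indeed $\mu_1, \ell(\mu) \leq ap-3a$, i.e. $\mu \in \mathcal{B}_{ap}(ap-3a)$. Applying Lemma~\ref{lem: A1} yields $\chi^\mu(1) \geq (p-1)^a \cdot a!$, and combining with the inequality from the previous paragraph gives
\[
\chi^\lambda(1) \geq \chi^\mu(1)\cdot\chi^\gamma(1) \geq (p-1)^a \cdot a! \cdot \chi^\gamma(1).
\]
Finally, for the ``in particular'' clause, any $\theta \in \mathrm{Irr}_{p'}(N_{ap} \mid \gamma)$ is of the form $\theta = \eta \times \chi^\gamma$ with $\eta \in \mathrm{Irr}_{p'}(N_{ap})$, so $\theta(1) = \eta(1)\cdot\chi^\gamma(1) \leq (p-1)^a \cdot a! \cdot \chi^\gamma(1) \leq \chi^\lambda(1)$, where the first inequality is exactly Proposition~\ref{prop: maxdegAP} (with $d \leq a!$ for $S_a$).

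I do not expect a genuine obstacle here: the argument is essentially a transfer of Lemma~\ref{lem: A1} across the containment $\gamma \subseteq \lambda$ via Littlewood--Richardson, together with the crucial observation that restriction to a Young subgroup can only make the ``width'' and ``height'' parameters of a constituent smaller. The only point requiring a little care is making sure $\mu$ genuinely lies in $\mathcal{B}_{ap}(ap-3a)$ and not merely in some larger box, but this follows cleanly from $\mu \subseteq \lambda$. One should also confirm that $ap - 3a \geq 1$ (equivalently $p \geq 4$, which holds since $p \geq 11$) so that the box $\mathcal{B}_{ap}(ap-3a)$ and Lemma~\ref{lem: A1} make sense.
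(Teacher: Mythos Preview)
Your proposal is correct and follows essentially the same route as the paper's own proof: use $C_p(\lambda)=\gamma$ to get $\gamma\subseteq\lambda$, apply Lemma~\ref{lem: LR} to produce $\mu\in\mathcal{P}(ap)$ with $\mathrm{LR}(\lambda;\mu,\gamma)\neq 0$ and $\mu\subseteq\lambda$, deduce $\mu\in\mathcal{B}_{ap}(ap-3a)$, and then invoke Lemma~\ref{lem: A1} and Proposition~\ref{prop: maxdegAP}. Your write-up is in fact somewhat more detailed than the paper's, which compresses the same argument into three sentences.
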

\begin{proof}
Since $C_p(\lambda)=\gamma$ we have that $\gamma\subseteq\lambda$ and therefore there exists $\mu\in\mathcal{P}(ap)$ such that $\mathrm{LR}(\lambda; \mu,\gamma)\neq 0$. From Lemma \ref{lem: LR} we deduce that $\mu\subseteq\lambda$ and hence that $\mu\in\mathcal{B}_{ap}(ap-3a)$. Lemma \ref{lem: A1} now implies that 
$$\chi^\lambda(1)\geq\chi^\mu(1)\chi^\gamma(1)\geq (p-1)^a\cdot a!\cdot \chi^\gamma(1).$$
The second statement is an immediate consequence of Proposition \ref{prop: maxdegAP}.
\end{proof}

We are now ready to fill the gap in the proof of Theorem \ref{thm: base}, by dealing with positive integers strictly smaller than $p^2$.

\begin{prop}\label{prop: base1}
Let $n=ap+r$ for some $a\in [1,p-1]$ and some $r\in [0,p-1]$. There exists a bijection 
$\varepsilon_n:\mathrm{Irr}_{p'}(S_n)\rightarrow\mathrm{Irr}_{p'}(N_n),$ such that $\varepsilon_n(\chi)(1)\leq\chi(1)$, for every $\chi\in\mathrm{Irr}_{p'}(S_n)$. 
\end{prop}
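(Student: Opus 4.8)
The plan is to prove Proposition~\ref{prop: base1} by induction on $a$, invoking Proposition~\ref{prop: strategy} with $k=1$ to reduce everything to constructing, for each fixed $\gamma\in\mathcal{P}_{p'}(r)=\mathcal{P}(r)$ (recall $r<p$ so $N_r=S_r$ and we may take $\varepsilon_r=\mathrm{id}$), a bijection $\varepsilon_\gamma:\mathrm{Irr}(S_n\mid\gamma)\to\mathrm{Irr}_{p'}(N_n\mid\gamma)$ with $\varepsilon_\gamma(\chi)(1)\le\chi(1)$. The cases $a=1$ and $a=2$ come for free: $a=1$ is Proposition~\ref{prop: auguale1}, and $a=2$ is exactly the content recorded in Remark~\ref{rem: finesec4}, which observes that Case~1 of the proof of Theorem~\ref{thm: B2} is valid for all $k$, in particular $k=1$. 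So the real work is for $3\le a\le p-1$, and we fix such an $a$ and a core $\gamma\in\mathcal{P}(r)$.

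The skeleton mirrors Case~2 of Theorem~\ref{thm: B2}. For $x\in[1,a]$ set $\Omega_x=\{\chi^\lambda\in\mathrm{Irr}(S_n\mid\gamma)\mid N_p(\lambda)=x\}$, so that $\mathrm{Irr}(S_n\mid\gamma)=\Omega_1\cup\cdots\cup\Omega_a$ and, by Notation~\ref{not: Deltas} and Lemma~\ref{lem: sizeDelta}, $|\Omega_x|=|\Delta_x|$ with $|\Delta_a|=2p$, $|\Delta_{a-1}|$ and $|\Delta_{a-2}|$ as tabulated there. The first step is to show that all characters in $\Omega_x$ for $x\le a-3$ are already large: if $\chi^\lambda\in\Omega_x$ then by Proposition~\ref{prop: decisiva}, $\lambda\in\mathcal{B}_n(r+(a-3)p)\subseteq\mathcal{B}_n(n-3p)\subseteq\mathcal{B}_n(ap-3a)$ (using $r<p$, so $r+(a-3)p=n-3p$, and $n-3p\le ap-3a$ since $r\le p-1$ forces $\ldots$ — one checks $ap+r-3p\le ap-3a\iff r\le 3p-3a=3(p-a)$, which holds because $r\le p-1\le 3(p-a)$ when $a\le p-1$… actually $p-a\ge1$ so $3(p-a)\ge3>p-1$ fails for large $p$; instead use that $\chi^\lambda\in\mathrm{Irr}(S_n\mid\gamma)$ means $\gamma\subseteq\lambda$ and pull out $\mu\in\mathcal{P}(ap)$ with $\mathrm{LR}(\lambda;\mu,\gamma)\ne0$, $\mu\subseteq\lambda$, hence $\mu\in\mathcal{B}_{ap}(n-3p)\subseteq\mathcal{B}_{ap}(ap-3a)$ since $r\le p-1$ gives $n-3p=ap+r-3p$ and we want this $\le ap-3a$, i.e.\ $r+3a\le 3p$; this needs care and is the kind of bookkeeping one settles by choosing the threshold in Lemma~\ref{lem: A1} appropriately). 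Granting the right threshold, Lemma~\ref{lem: A2} gives $\chi^\lambda(1)\ge(p-1)^a a!\,\chi^\gamma(1)\ge\theta(1)$ for every $\theta\in\mathrm{Irr}_{p'}(N_n\mid\gamma)$, so these characters can be matched to anything.

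It then remains to build an injective map $\varphi:\Omega_a\cup\Omega_{a-1}\cup\Omega_{a-2}\to\mathrm{Irr}_{p'}(N_n\mid\gamma)$ that decreases degrees, after which Lemma~\ref{lem: McKay} lets us extend it to the required bijection $\varepsilon_\gamma$. Here I would reuse the small-degree families $\overline{X(1)},\overline{Y(1)},\overline{Z(1)}$ from Notation~\ref{not: subsets} (with $k=1$), whose members have degree $\le a(a-1)\le ap-1$, together with a further family absorbing $\Omega_{a-2}$ (built analogously from $\mathrm{Lin}(N_p)$-tuples and a degree-$(a-2)$ or degree-$(a-2,1)^\circ$ piece, as in the definition of $Z(k)$ but one rank deeper — this is where one needs a genuinely new set, call it $W(1)$, of size roughly $p^3$ matching $|\Delta_{a-2}|$). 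Using Lemma~\ref{lem: LR}(b), every $\chi^\lambda$ in $\Omega_a\cup\Omega_{a-1}\cup\Omega_{a-2}$ other than the two extremes $\chi^{\gamma+(ap)},\chi^{\gamma\cup(1^{ap})}$ restricts to $S_{ap}\times S_r$ with a constituent $\chi^\mu\times\chi^\gamma$, $\mu\notin\{(ap),(1^{ap})\}^\circ$, whence $\chi^\mu(1)\ge ap-1$ by Lemma~\ref{lem: Rasala1} and so $\chi^\lambda(1)\ge(ap-1)\chi^\gamma(1)\ge\theta(1)\varepsilon_r(\chi^\gamma)(1)$ for all relevant $\theta$; the two extremes get sent into $\overline{X_0(1)}$ (linear characters), whose degree equals $\chi^\gamma(1)\le\chi^{\lambda}(1)$. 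The main obstacle is the counting: one must verify $|X(1)\cup Y(1)\cup Z(1)\cup W(1)|\ge|\Omega_a\cup\Omega_{a-1}\cup\Omega_{a-2}|$, i.e.\ roughly $p^3+O(p^2)$ on both sides, and — just as the proof of Theorem~\ref{thm: B2} hit an exceptional range $75\le n\le99$ — there will likely be small primes $p\in\{11,13\}$ or small/large values of $a$ near the endpoints where the inequality is tight and must be repaired by enlarging one of the local families (adjoining characters of $N_p$ of degree $p-1$), or handled by the direct computation already invoked for $p\in\{5,7\}$. Isolating and patching those finitely many exceptional configurations is the delicate part; everything else is the same machinery as in Section~\ref{sec: theorem B}.
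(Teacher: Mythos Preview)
Your overall shape is right (reduce via Proposition~\ref{prop: strategy}, dispose of $a\le 2$, stratify by $N_p(\lambda)$, cover the top strata by explicit small-degree local families), but there is a genuine structural gap that you yourself flagged and then waved away: the inequality $r+3a\le 3p$ needed to place $\Omega_x$ (for $x\le a-3$) inside $\mathcal{B}_n(ap-3a)$ simply fails when $a$ is large relative to $p$. Concretely, for $a$ with $3a>2p$ (which occurs since $a$ ranges up to $p-1$) and $r\ge 1$, one has $r+3a>3p$, so $\Delta_{a-3}\not\subseteq\mathcal{B}_n(ap-3a)$ and Lemma~\ref{lem: A2} does \emph{not} dispose of $\Omega_{a-3}$. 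Your single ``always handle $\Omega_a\cup\Omega_{a-1}\cup\Omega_{a-2}$'' scheme therefore cannot work uniformly. The paper's fix is a trichotomy on the size of $3a$: for $3a\le p$ only $\Omega_{a-1}\cup\Omega_a$ escape the box $\mathcal{B}_n(ap-3a)$; for $p<3a\le 2p$ one must add $\Omega_{a-2}$; for $2p<3a<3p$ one must also add $\Omega_{a-3}$. Each range gets its own matching local families (the paper introduces sets $W$, $V$, and in Case~3 a further set $Z$ built from four distinct linear characters of $N_p$) of the appropriate cubic or quartic size in $p$.

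A second, related gap concerns degrees. For $\Omega_{a-2}$ (and a fortiori $\Omega_{a-3}$) the lower bound $(ap-1)\chi^\gamma(1)$ coming from Lemma~\ref{lem: Rasala1} is far too weak: any local family of size $\sim p^3$ built from triples of linear characters of $N_p$ will contain characters of degree up to $a(a-1)(a-3)$, which exceeds $ap-1$ once $a$ is moderately large. The paper instead uses that $\Delta_{a-2}\subseteq\mathcal{B}_n(n-2p)$ and pulls out $\mu\in\mathcal{B}_{ap}(ap-p)$, so that Lemma~\ref{lem: Rasala2}(a) yields $\chi^\lambda(1)\ge\tfrac{1}{2}ap(ap-3)\chi^\gamma(1)$; this quadratic bound is what makes the cubic-sized local families $W\cup V$ (and the quartic-sized family for $\Omega_{a-3}$ in Case~3) admissible. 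Without this sharper Rasala input your injection $\varphi$ on $\Omega_{a-2}$ cannot be made degree-decreasing.
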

\begin{proof}
Let $\gamma\in\mathcal{P}(r)$. As explained by Proposition \ref{prop: strategy} it is enough to show the existence of a bijection $\varepsilon_\gamma$ between
$\mathrm{Irr}(S_n\ |\ \gamma)$ and $\mathrm{Irr}_{p'}(N_n\ |\ \gamma)$ such that $\varepsilon_{\gamma}(\chi)(1)\leq\chi(1)$ for every $\chi\in \mathrm{Irr}(S_n\ |\ \gamma)$.
By Proposition \ref{prop: auguale1} we can assume that $a\in [2,p]$. 
Moreover, as explained in Remark \ref{rem: finesec4}, the case $a=2$ can be dealt with exactly the same argument used in Case $1$ of the proof of Theorem \ref{thm: B2}. Hence from now on we work under the assumption that $a\in [3,p]$. 

As usual we first show that many irreducible characters in $\mathrm{Irr}(S_n\ |\ \gamma)$ have degree larger than the largest degree of any character in $\mathrm{Irr}_{p'}(N_n\ |\ \gamma)$. More precisely, observe that if $\chi\6\lambda\in\mathrm{Irr}(S_n\ |\ \gamma)$ for some partition $\lambda\in\mathcal{B}_n(ap-3a)$, then $\chi\6\lambda(1)\geq \eta(1)$ for every $\eta\in\mathrm{Irr}_{p'}(N_n\ |\ \gamma)$, by Lemma \ref{lem: A2}.
Let us denote by $\Delta$ the subset of $\mathcal{P}(n)$ defined by $$\Delta=\{\lambda\in\mathcal{P}(n)\ |\ \lambda\notin\mathcal{B}_n(ap-3a),\ \text{and}\ \chi\6\lambda\in\mathrm{Irr}(S_n\ |\ \gamma)\}.$$
Since $|\mathrm{Irr}(S_n\ |\ \gamma)|=|\mathrm{Irr}_{p'}(N_n\ |\ \gamma)|$,
to conclude the proof it is enough to find an injective map $\varphi:\Delta\rightarrow\mathrm{Irr}_{p'}(N_n\ |\ \gamma)$, such that $\varphi(\lambda)(1)\leq \chi^\lambda(1)$ for every $\lambda\in\Delta$. 

Recalling Notation \ref{not: Deltas}, for any $x\in [1,a]$ we let $\Delta_x$ be the subset of $\mathcal{P}(n)$ defined by 
$$\Delta_x=\{\lambda\in\mathcal{P}(n)\ |\ C_p(\lambda)=\gamma,\ \text{and}\ N_p(\lambda)=x\}.$$
The main strategy we will adopt is to show that there exists some \( y \in [1, a] \) such that \( \Delta \subseteq \Delta_y \cup \Delta_{y+1} \cup \cdots \cup \Delta_a \), and that there exists an injective map
\[
\varphi: \Delta_y \cup \Delta_{y+1} \cup \cdots \cup \Delta_a \rightarrow \mathrm{Irr}_{p'}(N_n \mid \gamma)
\]
satisfying \( \varphi(\lambda)(1) \leq \chi^\lambda(1) \) for every \( \lambda \in \Delta_y \cup \Delta_{y+1} \cup \cdots \cup \Delta_a \). The exact value of \( y \) will vary, depending on the relative sizes of \( 3a \) and \( p \). For this reason we split the proof into three main cases. 

\smallskip

\textbf{Case 1.} Suppose that $9\leq 3a\leq p$. In this case we have that $r+3a\leq 2p$ and therefore we have that $\mathcal{B}_n(r+(a-2)p)=\mathcal{B}_n(n-2p)\subseteq\mathcal{B}_n(ap-3a)$. This shows that if $\lambda\in\Delta$ then $N_p(\lambda)\geq a-1$, by Proposition \ref{prop: decisiva}. In particular $N_p(\lambda)\in\{a-1,a\}$ and hence we deduce that $\Delta\subseteq \Delta_{a-1}\cup\Delta_a$. 
Using Lemma \ref{lem: sizeDelta} we obtain that 
$$|\Delta_{a-1}\cup\Delta_a|=\begin{cases}
		2p\6{2}+p & \text{if } a=3,\\
		
		2p\62+2p & \text{if }  a\geq 4.
\end{cases}
$$

Let $\lambda_0=\gamma+(ap)$ and $\lambda_1=\gamma\cup (1^{ap})$. It is easy to see that $\lambda_0,\lambda_1\in\Delta$. Moreover, using Lemmas \ref{lem: LR} and \ref{lem: Rasala1}, we have that $\chi^\lambda(1)\geq (ap-1)\chi^\gamma(1)$ for every $\lambda\in \Delta\smallsetminus\{\lambda_0, \lambda_1\}$.

Let us now consider the subsets $X_0(1), X(1)$ and $Y(1)$  of $\mathrm{Irr}_{p'}(N_{ap})$ defined in Notation \ref{not: subsets}. Moreover, For any $Z\subseteq \mathrm{Irr}_{p'}(N_{ap})$ we denote by $\overline{Z}$ the subset of $\mathrm{Irr}_{p'}(N_{n}\ |\ \gamma)$ defined by 
$$\overline{Z}=\{\theta\times\chi^\gamma\ |\ \theta\in Z\}.$$
Clearly $|Z|=|\overline{Z}|$, for every $Z\subseteq\mathrm{Irr}_{p'}(N_{ap})$. 

From item (e) of Notation \ref{not: subsets} we know that $\theta(1)\leq ap-1$, for every $\theta\in X(1)\cup Y(1)$. Moreover, from the last part of Notation \ref{not: subsets} we have explicit lower bounds (and sometimes exact computations) for the sizes of the three sets $X_0(1), X(1)$ and $Y(1)$.

%
%
%
%
%
%
%
%
%

In particular, when $a\geq 4$, since $p\geq 11$ we have that $|\overline{X(1)}\cup\overline{Y(1)}|\geq |\Delta|$. Hence there exists an injective map $\varphi: \Delta\rightarrow\overline{X(1)}\cup\overline{Y(1)}$ such that $\varphi(\{\lambda_0,\lambda_1\})\subseteq\overline{X_0(1)}$. The observations collected above show that $\varphi(\lambda)(1)\leq \chi^\lambda(1)$ for every $\lambda\in\Delta$, as desired. 
Unfortunately, when $a=3$ we have that $|\overline{X(1)}\cup\overline{Y(1)}|< |\Delta|$. This problem is easily solved by considering the additional subset $V$ of $\mathrm{Irr}_{p'}(N_{3p})$ defined by
$$V=\{(\phi_1\times\phi_2\times\phi_3)^{N_{3p}}\ |\ \phi_1,\phi_2,\phi_3\in\mathrm{Lin}(N_{p})\},$$
where $\phi_1,\phi_2,\phi_3$ are always chosen to be pairwise distinct. It is now routine to check that exists an injective map $\varphi: \Delta\rightarrow\overline{X(1)}\cup\overline{Y(1)}\cup\overline{V}$ such that $\varphi(\lambda)(1)\leq \chi^\lambda(1)$ for every $\lambda\in\Delta$.

\smallskip

\textbf{Case 2.} Suppose that $p+1\leq 3a\leq 2p$. Since $p\geq 11$ we observe that $a\geq 4$. 
In this case we have that $\mathcal{B}_n(r+(a-3)p)=\mathcal{B}_n(n-3p)\subseteq\mathcal{B}_n(ap-3a)$. This shows that if $\lambda\in\Delta$ then $N_p(\lambda)\geq a-2$. In particular $N_p(\lambda)\in\{a-2, a-1,a\}$ and $\Delta\subseteq \Delta_{a-2}\cup\Delta_{a-1}\cup\Delta_a$. 
Since $a\geq 4$, from Lemma \ref{lem: sizeDelta} we know that $|\Delta_{a-1}\cup\Delta_a|=2p^2+2p$. 

\smallskip

\noindent\textbf{- Subcase (2.1)} We first assume that $a\geq 6$. 
From Lemma \ref{lem: sizeDelta} we have that 
$$|\Delta_{a-2}|=p^3+3p^2.$$
Moreover, for any $\lambda\in\Delta_{a-2}$ we claim that $$\chi^\lambda(1)\geq \frac{1}{2}ap(ap-3)\cdot\chi^\gamma(1).$$ To see this, notice first that
$\Delta_{a-2}\subseteq\mathcal{B}_n(r+(a-2)p)=\mathcal{B}_n(n-2p)$ by Proposition \ref{prop: decisiva}. 
Since $\mathcal{B}_n(n-2p)=\mathcal{B}_n(ap-(2p-r))$ we deduce that there exists $\mu\in\mathcal{B}_{ap}(ap-(2p-r))\subseteq \mathcal{B}_{ap}(ap-p)$ such that $\mathrm{LR}(\lambda; \gamma, \mu)\neq 0$. From Lemma \ref{lem: Rasala2} we have that $\chi^\mu(1)\geq \frac{1}{2}ap(ap-3)$ and therefore that $\chi^\lambda(1)\geq \frac{1}{2}ap(ap-3)\cdot\chi^\gamma(1)$, as claimed. 

We now consider the following subsets of $\mathrm{Irr}_{p'}(N_{ap})$. We let $X(1)$ and $Y(1)$ be defined exactly as in Case 1 above, and we further consider: 
$$W=\{(\mathcal{X}(\phi; \chi^\nu)\times\eta_1\times\eta_2)^{N_{ap}}\ |\ \phi,\eta_1,\eta_2\in\mathrm{Lin}(N_{p}),\ \nu\in\{(a-2), (a-3,1)\}^{\circ}\},$$
$$V=\{(\mathcal{X}(\phi; \chi^\nu)\times\mathcal{X}(\eta; \chi^\rho))^{N_{ap}}\ |\ \phi,\eta\in\mathrm{Lin}(N_{p}),\ \nu\in\{(a-2), (a-3,1)\}^{\circ},\  \rho\in\{(2)\}^{\circ}\}.$$
As usual, or any $Z\subseteq \mathrm{Irr}_{p'}(N_{ap})$ we denote by $\overline{Z}$ the subset of $\mathrm{Irr}_{p'}(N_{n}\ |\ \gamma)$ defined by 
$$\overline{Z}=\{\theta\times\chi^\gamma\ |\ \theta\in Z\}.$$
Clearly $|Z|=|\overline{Z}|$, for every $Z\subseteq\mathrm{Irr}_{p'}(N_{ap})$. 

\smallskip

We collect a few observations concerning $W$ and $V$. 

\smallskip

\noindent\textbf{(a)} In the definition of the set $W$ the linear characters $\phi$, $\eta_1$ and $\eta_2$ are chosen to be pairwise distinct, so that $\mathcal{X}(\phi; \chi^\nu)\times\eta_1\times\eta_2$ is an irreducible character of $N_{(a-2)p}\times N_p\times N_{p}\leq N_{ap}$ that induces irreducibly to $N_{ap}$. 
In particular we have that $\theta(1)\leq a(a-1)(a-3)$, for every $\theta\in W$.

\smallskip

\noindent\textbf{(b)} In the definition of the set $V$ the linear characters $\phi$ and $\eta$ are chosen to be distinct, so that $\mathcal{X}(\phi; \chi^\nu)\times\mathcal{X}(\eta; \chi^\rho)$ is an irreducible character of $N_{(a-2)p}\times N_{2p}\leq N_{ap}$ that induces irreducibly to $N_{ap}$. 
In particular we have that $\theta(1)\leq \frac{1}{2}a(a-1)(a-3)$, for every $\theta\in V$.

\smallskip

\noindent\textbf{(c)} From $(a), (b)$ we conclude that $\theta(1)\leq a(a-1)(a-3)\leq \frac{1}{2}ap(ap-3)$, for every $\theta\in W\cup V$. In particular, for every $\zeta\in\overline{W}\cup\overline{V}$ we have that 
$$\zeta(1)\leq \frac{1}{2}ap(ap-3)\cdot\chi^\gamma(1)\leq \chi^\lambda(1),\ \ \text{for all}\ \lambda\in\Delta_{a-2}.$$

\smallskip

Using Corollary \ref{cor: linN} and recalling that $a\geq 6$ it is easy to see that
$$|W|=4(p-1){p-2\choose 2}=2p^3-12p^2+22p-12,\ |V|=4\cdot2\cdot(p-1)(p-2)=8p^2-24p+16.$$
Since $p\geq 11$, a straightforward calculation shows that $|\overline{W}\cup\overline{V}|\geq |\Delta_{a-2}|$.

Arguing exactly as in Case 1, all the observations above allow us to deduce that
there exists an injective map $$\varphi: \Delta_a\cup\Delta_{a-1}\cup\Delta_{a-2}\rightarrow \overline{X(1)}\cup\overline{Y(1)}\cup\overline{W}\cup\overline{V}\subseteq\mathrm{Irr}_{p'}(N_n\ |\ \gamma),$$
such that $\varphi(\Delta_a\cup\Delta_{a-1})\subseteq  \overline{X(1)}\cup\overline{Y(1)}$, $\varphi(\Delta_{a-2})\subseteq  \overline{W}\cup\overline{V}$, and such that $\varphi(\lambda)(1)\leq \chi^\lambda(1)$ for every $\lambda\in\Delta_a\cup\Delta_{a-1}\cup\Delta_{a-2}$.

\smallskip

\noindent\textbf{- Subcase (2.2)} We now consider the case \( 4 \leq a \leq 5 \). Here, \( p \in \{11,13\} \), and the argument closely parallels that of Subcase~(2.1), with only minor adjustments required. It follows that there exists an injective map \( \varphi: \Delta \rightarrow \mathrm{Irr}_{p'}(N_n\ |\ \gamma) \) such that \( \varphi(\lambda)(1) \leq \chi^\lambda(1) \) for every \( \lambda \in \Delta \). We therefore omit the technical details.

\smallskip

\textbf{Case 3.} Suppose that $2p+1\leq 3a< 3p$. Since $p\geq 11$ we observe that $a\geq 8$. 
In this case we have that $\mathcal{B}_n(r+(a-4)p)=\mathcal{B}_n(n-4p)\subseteq\mathcal{B}_n(ap-3a)$. This shows that if $\lambda\in\Delta$ then $N_p(\lambda)\geq a-3$, by Proposition \ref{prop: decisiva}. In particular $N_p(\lambda)\in\{a-3,a-2, a-1,a\}$ and $\Delta\subseteq \Delta_{a-3}\cup\Gamma$, where $\Gamma=\Delta_{a-2}\cup\Delta_{a-1}\cup\Delta_a$. Recycling the notation and the arguments used in Cases 1 and 2, we consider the subsets 
$\overline{X(1)}, \overline{Y(1)}, \overline{W}, \overline{V}\subseteq\mathrm{Irr}_{p'}(N_n\ |\ \gamma)$ and we observe that there exists an injective map $$\varphi: \Delta_a\cup\Delta_{a-1}\cup\Delta_{a-2}\rightarrow \overline{X(1)}\cup\overline{Y(1)}\cup\overline{W}\cup\overline{V}\subseteq\mathrm{Irr}_{p'}(N_n\ |\ \gamma),$$
such that $\varphi(\Delta_a\cup\Delta_{a-1})\subseteq  \overline{X(1)}\cup\overline{Y(1)}$, $\varphi(\Delta_{a-2})\subseteq  \overline{W}\cup\overline{V}$, and such that $\varphi(\lambda)(1)\leq \chi^\lambda(1)$ for every $\lambda\in\Delta_a\cup\Delta_{a-1}\cup\Delta_{a-2}$. Let $A=\overline{X(1)}\cup\overline{Y(1)}\cup\overline{W}\cup\overline{V}$, to conclude the proof we just need to find a second injective map $f:\Delta_{a-3}\rightarrow\mathrm{Irr}_{p'}(N_n\ |\ \gamma)\smallsetminus A$ such that $f(\lambda)(1)\leq \chi^\lambda(1)$ for every $\lambda\in\Delta_{a-3}$. 
To this end, we consider the subset $Z\subseteq\mathrm{Irr}_{p'}(N_{ap})$ defined by
$$Z=\{(\mathcal{X}(\phi; \chi^\nu)\times\mathcal{X}(\eta; \chi^\rho)\times\eta_1\times\eta_2)^{N_{ap}}\ |\ \phi,\eta,\eta_1,\eta_2\in\mathrm{Lin}(N_{p}),\ \nu\in\{(a-4)\}^{\circ},\  \rho\in\{(2)\}^{\circ}\}.$$
As usual, we denote by $\overline{Z}$ the subset of $\mathrm{Irr}_{p'}(N_{n}\ |\ \gamma)$ defined by 
$$\overline{Z}=\{\theta\times\chi^\gamma\ |\ \theta\in Z\}.$$
Clearly $|Z|=|\overline{Z}|$, and $\overline{Z}\subseteq\mathrm{Irr}_{p'}(N_n\ |\ \gamma)\smallsetminus A$. 

We collect a few observations on the sets $Z$ and $\Delta_{a-3}$.

\smallskip

\noindent\textbf{(a)} In the definition of the set $Z$ the linear characters $\phi$, $\eta$, $\eta_1$ and $\eta_2$ are chosen to be pairwise distinct, so that $\mathcal{X}(\phi; \chi^\nu)\times\mathcal{X}(\eta; \chi^\rho)\times\eta_1\times\eta_2$ is an irreducible character of $N_{(a-4)p}\times N_{2p}\times N_{p}\times N_p\leq N_{ap}$ that induces irreducibly to $N_{ap}$. 
In particular we have $$\theta(1)\leq \frac{1}{2}a(a-1)(a-2)(a-3),\ \text{for every}\ \theta\in Z.$$

\smallskip

\noindent\textbf{(b)} By Proposition \ref{prop: decisiva} we have that $\Delta_{a-3}\subseteq \mathcal{B}_n(r+(a-3)p)\subseteq\mathcal{B}_n(ap-2p)$. Hence, using Lemma \ref{lem: Rasala2} and arguing exactly as in Case 2 above, we deduce that $$\chi^\lambda(1)\geq\frac{1}{2}ap(ap-3)\chi^\gamma(1),\ \text{for every}\ \lambda\in \Delta_{a-3}.$$ 

\smallskip

\noindent\textbf{(c)} Recalling that $a\geq 8$, from observations (a) and (b) we deduce that $$\theta(1)\leq\chi^\lambda(1)\ \text{for every}\ \theta\in\overline{Z}\ \text{and every}\ \lambda\in\Delta_{a-3}.$$ 

\smallskip

\noindent\textbf{(d)} Arguing exactly as in the proof of Lemma \ref{lem: sizeDelta} by counting the number of possible different $p$-quotients $Q_p(\lambda)$ of total size $a$ such that $N_p(\lambda)=a-3$, we deduce that 
$$|\Delta_{a-3}|=p^2(\frac{1}{3}p^2+3p+\frac{8}{3}).$$

\smallskip

\noindent\textbf{(e)} On the other hand, it is easy to see that $$|\overline{Z}|=|Z|=2(p-1)(p-2)(p-3)(p-4)=2p^4-20p^3+70p^2-100p+48.$$

Since $p\geq 11$, using observations (d) and (e), it is routine to check that $|\overline{Z}|\geq \Delta_{a-3}$. Therefore, using observation $(c)$ as well, we deduce that there exists an injective map $$f:\Delta_{a-3}\rightarrow \overline{Z}\subseteq\mathrm{Irr}_{p'}(N_n\ |\ \gamma)\smallsetminus A,$$ such that $f(\lambda)(1)\leq \chi^\lambda(1)$ for every $\lambda\in\Delta_{a-3}$. This concludes the proof. 
\end{proof}

\begin{rem}
Let $G$ be a finite group and $N = N_G(P)$ for $P \in \mathrm{Syl}_p(G)$. We note that Conjecture A would follow for $G$ if there existed a McKay bijection $f: \mathrm{Irr}_{p'}(G) \to \mathrm{Irr}_{p'}(N)$ compatible with character restriction, i.e., such that $[f(\chi), \chi_N] \neq 0$ for all $\chi \in \mathrm{Irr}_{p'}(G)$. However, it is important to observe that this stronger statement does not hold in general.

In the case of symmetric groups, such bijections are known to exist when $p=2$ (see, for instance, \cite[Theorem 4.3]{GKNT17}). On the other hand, they do not exist for $p \geq 5$. A small counterexample is provided by $S_5$ at the prime $p=5$, where two distinct characters $\chi^{(4,1)}, \chi^{(2,1^3)} \in \mathrm{Irr}_{5'}(S_5)$ restrict to the same irreducible character of $N$. This argument readily generalizes to $S_p$ for any prime $p \geq 5$, showing that no such bijection can exist.
The case $p=3$ is more elusive. A partial positive result was established in \cite[Theorem 4.14]{GTT}.
\end{rem}

%
%
%
%

\section{Appendix}\label{sec: 6}

This section is devoted to collecting a few elementary analytic observations used to justify some of the inequalities appearing in the article. We have chosen to present these results separately in an appendix in order to make the main body of the paper easier to read.

\begin{lem}\label{lem: analisi1}
Let $a,p,k\in\mathbb{R}$ be such that $k\geq 2$, $p\geq 5$ and $a\in [1,p-1]$. Then $$ap\6{k-1}\geq ak+(a-1)+(p-2).$$
\end{lem}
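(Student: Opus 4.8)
The plan is to fix $p$ and $a$ and view the claimed inequality as a statement about the single real variable $k\geq 2$. Set
$$f(k)=ap^{k-1}-ak-(a-1)-(p-2),$$
so that the assertion is exactly $f(k)\geq 0$ for all $k\geq 2$.

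First I would evaluate $f$ at the left endpoint $k=2$. A direct computation gives $f(2)=ap-2a-a-p+3=ap-3a-p+3=(a-1)(p-3)$, which is $\geq 0$ because $a\geq 1$ and $p\geq 5>3$. (It is worth noting that this bound is sharp: equality holds precisely when $a=1$ and $k=2$, so no slack is available there.)

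Next I would show that $f$ is increasing on $[2,\infty)$. Differentiating in $k$ yields $f'(k)=ap^{k-1}\ln p-a=a\bigl(p^{k-1}\ln p-1\bigr)$. For $k\geq 2$ we have $p^{k-1}\geq p\geq 5$ and $\ln p\geq\ln 5>1$, hence $p^{k-1}\ln p\geq 5\ln 5>1$; since $a\geq 1>0$ this gives $f'(k)>0$ on $[2,\infty)$. Therefore $f(k)\geq f(2)\geq 0$ for every $k\geq 2$, as required.

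The argument is entirely elementary; the only point requiring a little care is the decision to reduce to monotonicity in $k$ after first pinning down the boundary value $f(2)$, rather than trying to prove the inequality by a single algebraic manipulation — the latter is awkward precisely because the estimate is tight at $(a,k)=(1,2)$.
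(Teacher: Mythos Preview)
Your proof is correct. Both your argument and the paper's start identically at the boundary $k=2$, computing $f(2)=(a-1)(p-3)\geq 0$. For $k>2$ the paper proceeds differently: it first bounds $a$ away via $a\leq p-1$ and $a\geq 1$, reducing the claim for $k\geq 3$ to the two-variable inequality $p^{k-1}\geq p(k+2)$, which it then proves by showing monotonicity of $h(k,p)=p^{k-2}-(k+2)$ in both $k$ and $p$ and checking $h(3,5)=0$. Your route keeps $a$ and $p$ fixed throughout and uses a single monotonicity argument in $k$ via $f'(k)=a(p^{k-1}\ln p-1)>0$; this is more uniform and avoids the case split $k=2$ versus $k\geq 3$ as separate arguments. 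The paper's detour buys nothing extra here, so your approach is the cleaner one.
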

\begin{proof}
If $k=2$ then consider $f(a,p)=ap-3a-p+3$. It is easy to see that $f(a,p)=a(p-3)-(p-3)\geq 0$, for all $p\geq 5$ and all $a\in [1,p-1]$. Hence the statement holds in this case. 

Let us now assume that $k\geq 3$. Since $a\leq p-1$ we have that $$ak+(a-1)+(p-2)\leq (p-1)(k+2)-2\leq p(k+2).$$ On the other hand, $ap\6{k-1}\geq p\6{k-1}$. Let $h(k,p)=p\6{k-2}-(k+2)$ and let
 $$g(k,p)=p\6{k-1}-p(k+2)=p(p\6{k-2}-(k+2))=p\cdot h(k,p),$$
be defined on $D=\{(k,p)\ |\ k\geq 3, p\geq 5\}\subseteq \mathbb{R}\6{2}$. It is easy to see that $\frac{d}{dp}h(k,p)=(k-2)p\6{k-3}\geq 0$ for all $(k,p)\in D$. Similarly $\frac{d}{dk}h(k,p)=p\6{k-2}\mathrm{log}(p)-1\geq 0$ for all $(k,p)\in D$. It follows that for any $(k,p)\in D$ we have that $g(k,p)\geq g(3,5)=0$. We conclude that 
$$ap\6{k-1}\geq p\6{k-1}\geq  p(k+2)\geq (p-1)(k+2)-2\geq ak+(a-1)+(p-2),$$
as desired.
\end{proof}

\begin{lem}\label{lem: Pediconi}
Let $f(p,k)$ be the function of two real variables defined by $$f(p,k)=(p-1)\6{3k}+3(p-1)\6{k}-2p\6{2k}-2p\6{k}.$$
Then $f(p,k)\geq 0$ for all $p,k\in\mathbb{R}$ such that $k\geq 2$ and $p\geq 5$. 
\end{lem}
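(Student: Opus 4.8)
The key observation is that the term $(p-1)^{3k}$ dominates so strongly that it alone absorbs both $2p^{2k}$ and $2p^k$, making the remaining term $3(p-1)^k$ pure surplus. Concretely, I would reduce the lemma to the single inequality
$$(p-1)^{3k}\geq 2p^{2k}+2p^k,$$
since this immediately gives $f(p,k)=\big((p-1)^{3k}-2p^{2k}-2p^k\big)+3(p-1)^k\geq 3(p-1)^k\geq 0$.

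\textbf{Key steps.} First I would establish the elementary inequality $(p-1)^3\geq 2p^2$ for all real $p\geq 5$. Setting $g(p)=(p-1)^3-2p^2=p^3-5p^2+3p-1$, one computes $g(5)=14>0$ and $g'(p)=3p^2-10p+3$, whose roots are $p=1/3$ and $p=3$; hence $g'(p)\geq 0$ on $[5,\infty)$, so $g$ is nondecreasing there and $g(p)\geq g(5)>0$ for all $p\geq 5$. Since $t\mapsto t^k$ is increasing on $(0,\infty)$ and $k\geq 2$, raising $(p-1)^3\geq 2p^2$ to the power $k$ yields
$$(p-1)^{3k}\geq (2p^2)^k=2^k p^{2k}\geq 4p^{2k},$$
using $2^k\geq 4$. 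Finally, because $p\geq 5$ and $k\geq 2$ force $p^k\geq 1$, we get
$$4p^{2k}-2p^{2k}-2p^k=2p^k\big(p^k-1\big)\geq 0,$$
so $(p-1)^{3k}\geq 4p^{2k}\geq 2p^{2k}+2p^k$, which is exactly the reduced inequality. This completes the proof.

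\textbf{Main obstacle.} There is essentially no real difficulty here; the only point requiring a little care is justifying the reduction cleanly (i.e.\ making sure the cube term's slack really does beat the lower-order terms for \emph{all} $p\geq 5$ and $k\geq 2$, not just asymptotically), which is handled by the explicit monotonicity argument for $g$ together with the crude bounds $2^k\geq 4$ and $p^k\geq 1$. One should double-check that each inequality used is valid for real (not merely integer) exponents, which holds since all bases involved are positive and $k>0$.
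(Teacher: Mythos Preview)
Your proof is correct and follows the same overall strategy as the paper's: discard the positive term $3(p-1)^k$, use $2p^{2k}+2p^{k}\leq 4p^{2k}$, and reduce to showing $(p-1)^{3k}\geq 4p^{2k}$. The paper verifies this last inequality by a two-variable monotonicity argument on the ratio $(p-1)^{3k}/p^{2k}$ (via logarithms), whereas your route---first establishing $(p-1)^3\geq 2p^2$ for $p\geq 5$ by elementary calculus and then raising to the $k$th power to pick up the extra factor $2^k\geq 4$---is a slightly slicker variant of the same idea.
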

\begin{proof}
Since $2p\6{2k}+2p\6{k}\leq 4p^{2k}$ we have that $$f(p,k)\geq (p-1)\6{3k}-4p^{2k}=p^{2k}\big(\frac{(p-1)^{3k}}{p^{2k}}-4\big).$$
Let us consider the function $$h(x,y)=\frac{x^{3y}}{(x+1)^{2y}}-4,\ \text{on the domain}\ D=\{(x,y)\in\mathbb{R}^2\ |\ x\geq 4, y\geq 2\}.$$
Notice that $h(x,y)=e^{yg(x)}-4$, where $g(x)=3\mathrm{log}(x)-2\mathrm{log}(x+1)$.
Since $$g'(x)=\frac{3}{x}-\frac{2}{x+1}=\frac{x+3}{x(x+1)}>0,\ \text{for all}\ x\geq 4,$$
we deduce that the function $h(x,y)$ is increasing in both $x$ and $y$. Since $h(4,2)>0$ we deduce that $h(x,y)>0$ for every $(x,y)\in D$. We conclude that 
$$f(p,k)\geq p^{2k}\cdot h(p-1,k)\geq 0,\ \text{for every}\ k\geq 2\ \text{and every}\ p\geq 5.$$
\end{proof}

\begin{lem}\label{lem: Pediconi3}
Let $f(p,k)$ be the function of two real variables defined by $$f(p,k)=\frac{1}{6}(p-1)\6{3k}+\frac{3}{2}(p-1)\6{2k}+\frac{4}{3}(p-1)^k-2p\6{2k}-p\6{k}.$$
Then $f(p,k)\geq 0$ for all $p,k\in\mathbb{R}$ such that $k\geq 2$ and $p\geq 7$. Moreover, $f(5,k)\geq 0$, for all $k\geq 3$.
\end{lem}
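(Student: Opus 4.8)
The plan is to mimic the proof of Lemma \ref{lem: Pediconi}: discard the two smallest positive terms, fold the linear negative term $p^k$ into the quadratic one $2p^{2k}$, and reduce everything to a single inequality of the form $\big((p-1)^3/p^2\big)^k \ge c$ for an explicit constant $c$, which then follows from two elementary monotonicity observations.

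First I would note that $\tfrac32(p-1)^{2k} + \tfrac43(p-1)^k \ge 0$, so it suffices to prove $\tfrac16(p-1)^{3k} \ge 2p^{2k}+p^k$. In both regimes one has $p^k \ge 6$ (indeed $p^k \ge 49$ when $p \ge 7$, $k \ge 2$, and $5^k \ge 125$ when $k \ge 3$), hence $p^k \le \tfrac16 p^{2k}$ and therefore $2p^{2k}+p^k \le \tfrac{13}{6}p^{2k}$. Consequently
$$f(p,k) \;\ge\; \frac16(p-1)^{3k} - \frac{13}{6}p^{2k} \;=\; \frac{p^{2k}}{6}\left(\left(\frac{(p-1)^3}{p^2}\right)^{\!k} - 13\right),$$
and it remains only to check that $\big((p-1)^3/p^2\big)^k \ge 13$ throughout.

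For this, I would observe that the map $x \mapsto x^3/(x+1)^2$ is increasing on $(0,\infty)$, since its derivative equals $x^2(x+3)/(x+1)^3 > 0$; hence $t := (p-1)^3/p^2$ is increasing in $p$, and as $t > 1$ the quantity $t^k$ is increasing in both $p$ and $k$. Therefore on the region $\{p \ge 7,\ k \ge 2\}$ we get $t^k \ge (216/49)^2 = 46656/2401 > 13$, while on $\{p = 5,\ k \ge 3\}$ we get $t^k \ge (64/25)^3 = 262144/15625 > 13$. In both cases this yields $f(p,k) \ge 0$.

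The only genuinely delicate point is the boundary value $p = 5$, $k = 3$. The cruder estimate $2p^{2k}+p^k \le 3p^{2k}$ would force one to verify $(64/25)^k \ge 18$, which narrowly fails at $k = 3$, where $(64/25)^3 \approx 16.78$. Keeping instead the sharper bound $2p^{2k}+p^k \le \tfrac{13}{6}p^{2k}$ — legitimate precisely because $p^k \ge 6$ — lowers the target constant to $13$, and then $(64/25)^3 > 13$ closes the gap. Everything else is a routine numerical check.
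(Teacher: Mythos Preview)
Your proof is correct and follows essentially the same route as the paper's: both discard lower-order terms to reduce the claim to $\big((p-1)^3/p^2\big)^k \ge c$ for an explicit constant, then verify this at the boundary points $(p,k)=(7,2)$ and $(5,3)$ using the monotonicity of $x\mapsto x^3/(x+1)^2$. The only cosmetic difference is that the paper absorbs the stray $-p^k$ into the positive term $\tfrac32(p-1)^{2k}$ (yielding target constant $12$) rather than into $2p^{2k}$ as you do (yielding $13$); both choices clear the critical value $(64/25)^3\approx 16.78$ comfortably.
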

\begin{proof}
Observe that $f(p,k)\geq \frac{1}{6}(p-1)^{3k}-2p^{2k}=\frac{1}{6}p^{2k}\big(\frac{(p-1)^{3k}}{p^{2k}}-12\big)$. Considering the function $h(x,y)=\frac{x^{3y}}{(x+1)^{2y}}-12$ on the domain $D=\{(x,y)\in\mathbb{R}^2\ |\ x\geq 4, y\geq 2\}$, observing that $h(4,3)\geq 0$ and that $h(6,2)\geq 0$, and arguing exactly as in the proof of Lemma \ref{lem: Pediconi} we conclude the proof. 
\end{proof}


\end{document}